\newcommand{\sech}{\text{ sech}}
\newcommand{\Es}{ l^2 L^\infty H^{s}}
\newcommand{\LEs}{l^2l^\infty L^2 H^{-s}}
\newcommand\R{{\mathbb R}}
\newcommand\Z{{\mathbb Z}}
 \newtheorem{theorem}{Theorem}
\renewcommand\P{\mathcal P}
\newcommand\p{\sigma}
 \newtheorem{remark}{Remark}[section]
 \newtheorem{lemma}[remark]{Lemma}
 \newtheorem{cor}[remark]{Corollary}
 \newtheorem{proposition}[remark]{Proposition}
 \newtheorem{definition}[remark]{Definition}
\begin{document}
\title{Energy and local energy
 bounds for the 1-d cubic NLS equation in $H^{-\frac14}$}

\author{Herbert Koch}
\address{Mathematisches Institut   \\ Universit\"at Bonn }

\author{ Daniel Tataru}
\address {Department of Mathematics \\
  University of California, Berkeley}

\begin{abstract}
  We consider the cubic Nonlinear Schr\"odinger Equation (NLS) in one
  space dimension, either focusing or defocusing.  We prove that the
  solutions satisfy a-priori local in time $H^{s}$ bounds in terms of
  the $H^s$ size of the initial data for $s \geq -\frac14$. This
  improves earlier results of Christ-Colliander-Tao~\cite{CCT} and of
  the authors \cite{MR2353092}. The new ingredients are a localization
  in space and local energy decay, which we hope to be of independent
  interest.
\end{abstract}
\maketitle

\section{Introduction}

We consider the cubic Nonlinear Schr\"odinger  equation (NLS) 
\begin{equation}
i u_t - u_{xx} \pm u |u|^2 = 0, \qquad u(0) = u_0,
\label{nls}\end{equation}
in one space dimension, either focusing or defocusing.  This problem
is invariant with respect to the scaling
\[
u(x,t) \to \lambda u(\lambda x,\lambda^2 t) 
\]
as is the Sobolev space $\dot H^{-\frac12}$, which one may view as the
critical Sobolev space.  The NLS equation \eqref{nls} is also
invariant under the Galilean transformation
\[ 
u(x,t) \to   e^{icx-ic^2 t}     u (x+2ct,t) 
\]
which corresponds to a shift in the frequency space.  However the
space $\dot H^{-\frac12}$ is not Galilean invariant.

This problem is globally well-posed for initial data $u_0 \in L^2$,
and the $L^2$ norm of the solution is conserved along the
flow. Furthermore, the solution has a Lipschitz dependence on the
initial data, uniformly for time in a compact set and data in bounded
sets in $L^2$.  Precisely, if $u$ and $v$ are two solutions for 
\eqref{nls} with initial data $u_0$, respectively $v_0$ then we have
\[
\| u(t) -v(t)\|_{L^2} \lesssim  \|u_0-v_0\|_{L^2}, \qquad |t| < 1, \quad
\|u_0\|_{L^2},\|v_0\|_{L^2} \leq 1
\]
By scaling and reiteration this implies a global in time bound
\begin{equation}
\| u(t) -v(t)\|_{L^2} \lesssim  e^{ C |t| (\|u_0\|_{L^2}+\|v_0\|_{L^2})^4}   
 \|u_0-v_0\|_{L^2}
\label{lip}\end{equation}

A natural question to ask is whether local well-posedness also holds
in negative Sobolev spaces between $H^{-1/2}$ and $L^2$.  As a
consequence of the Galilean invariance, the map from initial data to
the solution at time $1$ cannot be uniformly continuous in the unit
ball in $H^s$ with $s < 0$, (see \cite{MR1813239}, \cite{MR2018661}).
However, it is not implausible that one may have well-posedness with
only continuous dependence on the initial data. This problem seems to
be closely related to that of relaxing the exponential bound in
\eqref{lip} to a polynomial bound. Choosing the focusing or the
defocusing problem may also make a difference.

At this point we are unable to tackle the question of uniqueness or
continuous dependence on the initial data in $L^2$ in the $H^s$ norm
for $ s< 0$.  This remains a fundamental open problem, whose answer
may depend also on the focusing or defocusing character of the
equation.

The problem of obtaining apriori estimates in negative Sobolev spaces
was previously considered by Christ-Colliander-Tao~\cite{CCT} ($s \geq
-1/12$) and by the authors \cite{MR2353092}($s \geq -1/6$). One key
idea was that one can bootstrap suitable Strichartz type norms of the
solution but only on frequency dependent time-scales. Another idea was
to use the $I$-method to construct better almost conserved $H^s$ type
norms for the problem.

In this article we introduce another ingredient into the mix, namely 
local energy bounds. By establishing separately that the solutions 
satisfy local energy  bounds on the unit time scale we are able 
to weaken  the interval summation losses and obtain a better result
with more a-priori bounds on the solutions.

As in our previous work \cite{MR2353092}, here we focus on the
question of a-priori bounds in negative Sobolev spaces.  In the
process, we also establish certain space-time bounds for the solution,
as well as for the nonlinearity in the equation; these bounds insure
that the equation is satisfied in the sense of distributions even for
weak limits, and hence we also obtain existence of global weak
solutions for initial data in $H^s$ for $-1/4 \le s < 0$.  It is
likely that $-1/4$ is not optimal. Our main result is as follows:

\begin{theorem}\label{main} 
There exists $\varepsilon>0$  such that the following is true. Let
\[
 -\frac14 \le s < 0,  \qquad \Lambda\geq 1 
\]
and assume  that the initial data $u_0 \in L^2$ satisfies 
\[ 
\Vert u_0 \Vert_{H^s_\Lambda}^2 :=  \int (\Lambda^2+ \xi^2)^{s}  
 |\hat u_0|^2 d\xi < \varepsilon^2. 
\]
Then the solution $u$ to \eqref{nls} satisfies
\begin{equation}  
\sup_{0\le t \le 1} \Vert u(t) \Vert_{H^s_\Lambda} \le 2 \Vert u_0 
\Vert_{H^s_\Lambda}. 
\end{equation} 
\end{theorem}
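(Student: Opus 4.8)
The plan is to prove Theorem~\ref{main} by a bootstrap/continuity argument on the frequency-localized $H^s_\Lambda$ norm. Let me sketch how I would organize this.

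The plan is to prove Theorem~\ref{main} by a bootstrap/continuity argument on the frequency-localized norm, run along the genuine $L^2$ solution whose existence and smoothness are guaranteed by the classical theory recalled above. Set $M(T) = \sup_{0\le t\le T}\|u(t)\|_{H^s_\Lambda}$; this is continuous with $M(0) = \|u_0\|_{H^s_\Lambda} < \varepsilon$, so it suffices to show that the bootstrap hypothesis $M(T)\le 2\|u_0\|_{H^s_\Lambda}$ forces the strict improvement $M(T)\le \tfrac32\|u_0\|_{H^s_\Lambda}$, which by continuity then propagates to all of $[0,1]$ and yields the claimed factor-$2$ bound. In parallel I would introduce two auxiliary space-time norms on the slab $\R\times[0,1]$: an energy norm of type $\Es$ and a local energy norm of type $\LEs$, both built from a Littlewood--Paley decomposition together with a partition of physical space into unit cubes (the $l^2$/$l^\infty$ structure). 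The frequencies $|\xi|\lesssim\Lambda$ carry the essentially constant weight $\Lambda^{2s}$ and behave like $L^2$, so the entire difficulty is concentrated in the high frequencies $|\xi|\gg\Lambda$.

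The heart of the argument is an almost-conservation (modified energy) estimate in the spirit of the $I$-method. Differentiating in time and using \eqref{nls}, the linear part drops out and one is left with the quartic flux
\[
\frac{d}{dt}\|u(t)\|_{H^s_\Lambda}^2 = \mp\, 2\,\Im \int (\Lambda^2-\partial_x^2)^s u \cdot \bar u\, |u|^2\, dx,
\]
whose Fourier representation is governed by the resonance function $\Omega = \xi_1^2-\xi_2^2+\xi_3^2-\xi^2 = 2(\xi_1-\xi_2)(\xi_2-\xi_3)$ on the sheet $\xi = \xi_1-\xi_2+\xi_3$. On the non-resonant region $|\Omega|\gg 1$ I would carry out a normal form, i.e. integration by parts in time: dividing by $\Omega$ produces a quartic correction $C_4(u)$, and substituting the equation once more turns its time derivative into a sextic remainder. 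Defining the modified energy $E_s(u) = \|u\|_{H^s_\Lambda}^2 - C_4(u)$, smallness of the data gives the equivalence $E_s(u)\approx \|u\|_{H^s_\Lambda}^2$, and the problem reduces to integrating over $[0,1]$ the remaining resonant quartic term together with the sextic remainder.

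The new and decisive ingredient is the treatment of the resonant part, where $\Omega$ supplies no gain. There the constraint forces $\xi_1=\xi_2$ or $\xi_2=\xi_3$, so the output frequency coincides with one of the inputs and the dangerous contributions take the schematic form $\int_0^1\!\!\int |u_{\mathrm{high}}|^2\,|u_{\mathrm{low}}|^2$ weighted by the $H^s_\Lambda$ multiplier on the high factor. These are exactly the interactions that the local energy norm $\LEs$ is designed to absorb: localizing in space and exploiting local smoothing for the Schr\"odinger group gains up to $-s\le\tfrac14$ derivatives on each unit cube (which is why $H^{-s}$ appears in $\LEs$), while the $l^2$ summation over cubes replaces the lossy interval summation used in the earlier $s\ge-\tfrac16$ argument of \cite{MR2353092}. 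A bilinear local energy estimate then controls the resonant flux by $\lesssim \|u\|_{\LEs}^2\,\|u\|_{\Es}^2$, and the linear and nonlinear mapping properties of the two norms — an energy inequality in $\Es$, a local smoothing/Duhamel estimate into $\LEs$, and a multilinear bound on $u|u|^2$ in the dual local energy space — feed these back into $M(T)$.

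The main obstacle, and the reason $s=-\tfrac14$ appears as the natural endpoint, is precisely this resonant high-frequency estimate: the derivative gain supplied by local smoothing is exactly $-s$, so it balances the $H^s_\Lambda$ loss on the high factor only down to $s=-\tfrac14$, and checking that the gains match — including the correct powers of $\Lambda$ and the simultaneous summability over dyadic frequencies and over spatial cubes — is the technical core of the proof. Once the resonant flux, the sextic remainder, and the correction $C_4$ are all dominated by a fixed power of $M(T)$ times $\varepsilon^2$, the resulting inequality $\sup_{[0,T]}\|u\|_{H^s_\Lambda}^2 \lesssim \|u_0\|_{H^s_\Lambda}^2 + \varepsilon^2 M(T)^2$ closes the bootstrap for $\varepsilon$ small, giving $\sup_{0\le t\le 1}\|u(t)\|_{H^s_\Lambda}\le 2\|u_0\|_{H^s_\Lambda}$.
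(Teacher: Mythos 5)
Your high-level skeleton --- a continuity argument, an $I$-method modified energy with a quartic correction and a sextic remainder, and local energy decay as the new ingredient replacing the lossy interval summation of \cite{MR2353092} --- is indeed the strategy of the paper. The genuine gap is in the analytic framework you allow yourself: you propose to run the whole argument with only the two norms $\Es_\Lambda$ and $\LEs_\Lambda$ (energy plus local smoothing on unit cubes), and that framework cannot close. The obstruction already appears in the sextic remainder when all six frequencies are comparable to $\mu$: after symbol separation one must bound $a(\mu)\mu^{-2}\int_0^1\!\!\int_{\R}|u_\mu|^6\,dx\,dt$, and with energy and local energy norms alone the best available tool is Bernstein, $\|u_\mu\|_{L^\infty_x}\lesssim\mu^{1/2}\|u_\mu\|_{L^2_x}$, which loses a positive power of $\mu$ relative to what is needed; the local energy norm does not rescue this, since its gain is a $\sup_j$ over cubes while the diagonal sextic integral requires summing over all cubes. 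What saves the paper is genuinely dispersive information: $L^6$ and bilinear $L^2$ Strichartz estimates in $U^2_\Delta$, available only after decomposing time into frequency-dependent intervals of length $\lambda^{4s}$. That is the entire content of the spaces $X^s_\Lambda$, $X^s_{\Lambda,le}$, $Y^s_\Lambda$, $Y^s_{\Lambda,le}$, of Propositions~\ref{plin} and \ref{pnonlin}, and of Lemma~\ref{r6}, none of which appears in your proposal. The same omission undermines your bootstrap: to close on $M(T)$ alone you must first bound the space-time norms of $u$ and of $|u|^2u$ in terms of $M(T)$, which is exactly the linear and trilinear machinery (Propositions~\ref{plin}, \ref{pnonlin}) that you compress into a single sentence.

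A second, independent gap: you never establish the local energy bound you lean on. Invoking ``a local smoothing/Duhamel estimate into $\LEs_\Lambda$'' is circular, because the Duhamel term requires $|u|^2u$ in a dual local-smoothing space --- the same multilinear problem again. The paper instead propagates local energy through a separate positive-commutator (Morawetz-type) identity built from $\phi\,\tilde a(D)+\tilde a(D)\,\phi$, with its own quartic correction $\tilde E_1$ and error terms $\tilde C_4$, $\tilde R_6$ (Proposition~\ref{p:le}); and it is precisely in estimating $\tilde C_4$ (Lemma~\ref{l:c4}, Case 3) that the zero-speed solitons show the bound is borderline at $s=-\frac14$, so this step cannot be waved through. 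Relatedly, your treatment of the quartic flux differs from the paper's in a way that creates extra work: the paper performs no resonant/non-resonant splitting at all, because the correction symbol $b_4$ in \eqref{b4} is non-singular --- the numerator $a(\xi_0)+a(\xi_1)-a(\xi_2)-a(\xi_3)$ vanishes on the resonant set together with the denominator --- so the quartic term is cancelled identically and only the sextic remainder and the $E_1$ boundary terms survive. Your partial normal form on $|\Omega|\gg 1$ is workable in principle, but it leaves a resonant quartic flux whose claimed bound $\lesssim\|u\|_{\LEs_\Lambda}^2\|u\|_{\Es_\Lambda}^2$ is asserted rather than proved, and at $s=-\frac14$ it is exactly the kind of soliton-saturated borderline estimate that demands a real argument.
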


As a byproduct of our analysis, in addition to the uniform bound
\eqref{mainxs}, we also establish space-time bounds for the solution
$u$ as well as for the nonlinearity $|u|^2 u$ in the time interval
$[0,1]$, namely
\begin{equation} \label{mainxs} \| \chi_{[0,1]} u\|_{X^s_\Lambda \cap
    X^s_{\Lambda,le}} \lesssim \| u_0\|_{H^s_\Lambda}, \qquad \|
  \chi_{[0,1]}|u|^2 u\|_{Y^s_\Lambda \cap Y^s_{\Lambda,le}} \lesssim
  \| u_0\|_{H^s_\Lambda}
\end{equation}
where the spaces $X^s_\Lambda$, $X^s_{\Lambda,le}$, $Y^s_\Lambda$ and  
$Y^s_{\Lambda,le}$ are defined in the next section.

The above theorem captures most of the technical contents of our
analysis.  However, it is not scale invariant, so taking scaling into
account we obtain further bounds. Indeed, rescaling 
\[
u_{\mu}(x,t) =
\mu u(\mu x,\mu^2 t)
\]
we have
\[
 \Vert u_\mu(0) \Vert_{H^s_{\mu \Lambda}} = \mu^{\frac{2s+1}2} \Vert
u(0) \Vert_{H^s_\Lambda}  
\]
Applying the above theorem to $u_\mu$ for $s=\frac14$ we obtain the  
case $s= \frac14$ of the following 

\begin{cor}
  Let $-\frac14\le s \le 0$.  Suppose that $M > 0$ and $\Lambda > 0$
  satisfy $ \Lambda \gg M^{4}$.  Let $u$ be a solution to
  \eqref{nls} with initial data $u_0 \in L^2$ so that
\[
\| u_0\|_{H^{-\frac14}_\Lambda} \leq M
\]
Then $u$  satisfies
\begin{equation}\label{14tos}
\sup_{|t| \le T} \Vert u(t) \Vert_{H^s_\Lambda} 
\lesssim \Vert u_0 \Vert_{H^s_\Lambda}, \qquad T \ll M^{-8}
\end{equation} 
\end{cor}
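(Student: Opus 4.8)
The plan is to deduce the corollary from Theorem~\ref{main} by a scaling argument, using the $H^{-1/4}_\Lambda$ bound to fix the rescaling parameter and then transferring the unit-time estimate of the theorem to the short interval $[0,T]$. First I would choose the scaling. With $u_\mu(x,t) = \mu u(\mu x, \mu^2 t)$ and the identity $\|u_\mu(0)\|_{H^{s}_{\mu\Lambda}} = \mu^{(2s+1)/2}\|u_0\|_{H^s_\Lambda}$, the endpoint $s=-\frac14$ gives exponent $\frac14$, so $\|u_\mu(0)\|_{H^{-1/4}_{\mu\Lambda}} \le \mu^{1/4}M$. Taking $\mu = c(\varepsilon/M)^4$ with a small absolute constant $c<1$ makes this $<\varepsilon$, so that Theorem~\ref{main} applies to $u_\mu$ at frequency scale $\mu\Lambda$. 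This is exactly where the hypothesis $\Lambda\gg M^4$ is used: it forces $\mu\Lambda = c\varepsilon^4\Lambda M^{-4}\ge 1$, making $\mu\Lambda$ an admissible scale.

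For the base case $s=-\frac14$, Theorem~\ref{main} yields $\sup_{0\le t\le 1}\|u_\mu(t)\|_{H^{-1/4}_{\mu\Lambda}} \le 2\|u_\mu(0)\|_{H^{-1/4}_{\mu\Lambda}}$. Undoing the scaling, $\|u_\mu(t)\|_{H^{-1/4}_{\mu\Lambda}} = \mu^{1/4}\|u(\mu^2 t)\|_{H^{-1/4}_\Lambda}$, while $t\in[0,1]$ corresponds to $\mu^2 t\in[0,\mu^2]$; the common factor $\mu^{1/4}$ cancels and leaves $\sup_{0\le\tau\le\mu^2}\|u(\tau)\|_{H^{-1/4}_\Lambda}\le 2\|u_0\|_{H^{-1/4}_\Lambda}$. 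Since $\mu^2 = c^2\varepsilon^8 M^{-8}$, this is the claimed bound on $[0,T]$ for $T\ll M^{-8}$. The estimate for negative times follows from the time-reversal symmetry $u(x,t)\mapsto\overline{u(x,-t)}$, which preserves \eqref{nls} and leaves all the $H^s_\Lambda$ norms unchanged.

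For general $s\in(-\frac14,0]$ the difficulty is that smallness of $\|u_0\|_{H^{-1/4}_\Lambda}$ does not imply smallness of $\|u_0\|_{H^{s}_\Lambda}$, since high-frequency data may carry a large $H^s$ norm; hence one cannot simply reapply the theorem at level $s$ with the same $\mu$. Instead I would establish a persistence-of-regularity statement on the unit interval: once $\|u_\mu(0)\|_{H^{-1/4}_{\mu\Lambda}}<\varepsilon$ has secured the space-time bounds \eqref{mainxs} at level $-\frac14$, one has $\sup_{0\le t\le 1}\|u_\mu(t)\|_{H^{s}_{\mu\Lambda}}\lesssim\|u_\mu(0)\|_{H^{s}_{\mu\Lambda}}$ for every $s\ge-\frac14$. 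This is a linear-in-the-top-norm bootstrap: in the trilinear term $|u|^2u$ one assigns the full $H^s$ weight to the highest-frequency factor and the $H^{-1/4}$ weight to the remaining two, so the estimate closes using only the already-established $X^{-1/4}_{\mu\Lambda}\cap X^{-1/4}_{\mu\Lambda,le}$ control and the smallness at $-\frac14$. Rescaling back, the factor $\mu^{(2s+1)/2}$ again cancels between the two sides and produces \eqref{14tos} on the same interval $[0,\mu^2]$, i.e. for $T\ll M^{-8}$.

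The main obstacle is precisely this persistence estimate. It requires revisiting the multilinear bounds underlying Theorem~\ref{main} and checking that the additional $s+\frac14$ derivatives can always be placed on the top-frequency factor without spoiling the delicate summation over frequency-dependent time scales that drives the proof. By contrast, the scaling bookkeeping, the verification that $\mu\Lambda\ge1$, and the time-reversal reduction are all routine; the analytic content lies entirely in transferring the endpoint bootstrap at $s=-\frac14$ to the full range $-\frac14\le s\le 0$.
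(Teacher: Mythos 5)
Your endpoint case $s=-\frac14$ is correct and coincides with the paper's own argument: rescale by $\mu \sim (\varepsilon/M)^4$, use $\Lambda \gg M^4$ to guarantee $\mu\Lambda \geq 1$ so that Theorem~\ref{main} applies at scale $\mu\Lambda$, undo the scaling to get the bound on $[0,\mu^2]$ with $\mu^2 \sim M^{-8}$, and use time reversal for negative times. However, your treatment of the range $-\frac14 < s \le 0$ has a genuine gap, which you flag yourself: the persistence-of-regularity estimate $\sup_{0\le t\le 1}\Vert u_\mu(t)\Vert_{H^s_{\mu\Lambda}} \lesssim \Vert u_\mu(0)\Vert_{H^s_{\mu\Lambda}}$ is asserted as plausible but never proved, and in this framework it is not a routine check. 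The spaces $X^s_\Lambda$, $X^s_{\Lambda,le}$, $Y^s_\Lambda$, $Y^s_{\Lambda,le}$ are built on $s$-dependent time scales $\lambda^{4s}$ and spatial scales $\lambda^{1+4s}$, so "placing the extra $s+\frac14$ derivatives on the top-frequency factor" while controlling everything by $X^{-\frac14}$-type bounds would force you to redo the trilinear estimates of Section~\ref{snonlin} and the quasi-conserved energy construction with mixed-regularity weights --- essentially re-proving the technical core of the paper. As written, the general case is a plan, not a proof.

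The missing idea is far more elementary, and it is what the paper does: the square-function equivalence
\[
\Vert v \Vert_{H^s_\Lambda}^2 \approx \sum_{\lambda \geq \Lambda} \lambda^{\frac12+2s}\, \Vert v \Vert_{H^{-\frac14}_\lambda}^2
\]
over dyadic $\lambda$. For every dyadic $\lambda \geq \Lambda$ the weight $(\lambda^2+\xi^2)^{-\frac14}$ is nonincreasing in $\lambda$, so $\Vert u_0 \Vert_{H^{-\frac14}_\lambda} \leq \Vert u_0 \Vert_{H^{-\frac14}_\Lambda} \leq M$, and moreover $\lambda \geq \Lambda \gg M^4$. Hence the endpoint case you already established applies at \emph{every} scale $\lambda \geq \Lambda$ simultaneously, with implicit constant and timescale $T \ll M^{-8}$ uniform in $\lambda$ (they depend only on $M$ and $\varepsilon$):
\[
\sup_{|t| \le T} \Vert u(t) \Vert_{H^{-\frac14}_\lambda} \lesssim \Vert u_0 \Vert_{H^{-\frac14}_\lambda}, \qquad \lambda \geq \Lambda .
\]
Squaring, multiplying by $\lambda^{\frac12+2s}$, and summing over dyadic $\lambda \geq \Lambda$ gives \eqref{14tos} at once. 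No new nonlinear estimate, bootstrap, or envelope argument is needed; the whole general case is a consequence of the endpoint case plus this norm identity.
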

The general case follows from the $s = \frac14$ case due to the following
equivalence:
\begin{equation}
\|v\|_{H^s_\Lambda}^2 \approx \sum_{\lambda \geq \Lambda} 
\lambda^{\frac12+2s} \|v\|_{H^{-\frac14}_\lambda}^2
\end{equation}
Here and below all the $\lambda$ summations are dyadic.

Applying the above corollary to a given solution for increasing values
of $\Lambda$ yields global in time bounds. Consider first the case
when $1/4 < s < 0$.  Given $M \geq 1$ and an initial data $u_0$ so
that $\|u_0\|_{H^s} \leq M$ we have
\[
\| u_0\|_{H^{-\frac14}_\Lambda} \lesssim \Lambda^{-s-\frac14} M, \qquad 
\Lambda \geq 1
\]
By the above corollary this yields
\[
\sup_{0\le t \le T} \Vert u(t) \Vert_{H^{s}_\Lambda} 
\lesssim \Vert u_0 \Vert_{H^{s}_\Lambda}, \qquad 
\Lambda \gg \max\{ T^{\frac{1}{8s+2}} M^{\frac{4}{4s+1}}, M^{\frac{2}{2s+1}}\}
\]
Hence we have proved

\begin{cor}
  Let $-\frac14 < s < 0$ and $M \geq 1$. Let $u$ be a solution  to
  \eqref{nls} with initial data $u_0 \in L^2$ so that
\[
\| u_0\|_{H^s} \leq M
\]
Then for all $T > 0$ the function $u$  satisfies
\begin{equation}  
 \sup_{|t| \le T} \Vert u(t) \Vert_{H^s_{\Lambda(T)}} \lesssim  M, 
  \qquad
 \Lambda(T) =\max\{ T^{\frac{1}{8s+2}} M^{\frac{4}{4s+1}}, M^{\frac{2}{2s+1}}\}
\end{equation} 
\end{cor}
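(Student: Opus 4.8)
The plan is to derive this from the preceding corollary — the one requiring $\Lambda \gg M^4$ and $\|u_0\|_{H^{-1/4}_\Lambda} \le M$ — by a single application with a $T$-dependent choice of $\Lambda$. The point I would exploit is that enlarging $\Lambda$ simultaneously shrinks the relevant data norm and lengthens the interval on which that corollary is valid, so that for $\Lambda = \Lambda(T)$ large enough a single application already covers all of $[0,T]$ and no time iteration is required.

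First I would record the Fourier-weight comparison that quantifies the gain. For $-\tfrac14 < s < 0$ and $\Lambda \ge 1$ one has the pointwise inequality
\[
(\Lambda^2+\xi^2)^{-\frac14} = (\Lambda^2+\xi^2)^{-(s+\frac14)}(\Lambda^2+\xi^2)^{s} \le \Lambda^{-2(s+\frac14)} (1+\xi^2)^{s},
\]
where $s+\tfrac14>0$ produces the decaying factor and $s<0$, $\Lambda\ge1$ let me replace $(\Lambda^2+\xi^2)^s$ by $(1+\xi^2)^s$. Integrating against $|\hat u_0|^2$ yields
\[
\|u_0\|_{H^{-\frac14}_\Lambda} \lesssim \Lambda^{-(s+\frac14)}\|u_0\|_{H^s} \le \Lambda^{-(s+\frac14)} M =: M'.
\]
The same comparison also gives $\|u_0\|_{H^s_\Lambda}\le\|u_0\|_{H^s}\le M$, which I will use at the end.

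Next I would choose $\Lambda$ so that the hypotheses of the preceding corollary hold with $M'$ in place of $M$ on the full interval $[0,T]$. The time-compatibility condition $T \ll (M')^{-8} \sim \Lambda^{8(s+\frac14)} M^{-8}$ becomes $\Lambda \gg T^{\frac{1}{8s+2}} M^{\frac{4}{4s+1}}$, while the smallness condition $\Lambda \gg (M')^4 \sim \Lambda^{-4(s+\frac14)} M^4$ becomes $\Lambda^{2+4s} \gg M^4$, i.e. $\Lambda \gg M^{\frac{2}{2s+1}}$. Taking $\Lambda = \Lambda(T) = \max\{T^{\frac{1}{8s+2}} M^{\frac{4}{4s+1}},\, M^{\frac{2}{2s+1}}\}$ satisfies both, and $M\ge1$ together with $2s+1>0$ ensures $\Lambda(T)\ge1$. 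A single application of the preceding corollary then gives $\sup_{|t|\le T}\|u(t)\|_{H^s_{\Lambda(T)}} \lesssim \|u_0\|_{H^s_{\Lambda(T)}} \le M$, as claimed.

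I do not expect a genuine obstacle here: once the preceding corollary is in hand the argument is purely a matter of balancing the two constraints, which is why the two terms of $\Lambda(T)$ appear. The one structural point to respect is the \emph{strict} inequality $s > -\tfrac14$, which makes $s+\tfrac14>0$ and hence the gain $\Lambda^{-(s+\frac14)}$ genuine; this is exactly what allows raising $\Lambda$ to both relax the smallness requirement and push the validity time $(M')^{-8}$ beyond any prescribed $T$. At the endpoint $s=-\tfrac14$ the gain disappears and the scheme collapses, which is why that case is excluded.
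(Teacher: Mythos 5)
Your proposal is correct and is essentially the paper's own argument: both rest on the weight comparison $\|u_0\|_{H^{-1/4}_\Lambda} \lesssim \Lambda^{-s-\frac14}\|u_0\|_{H^s}$ followed by a single application of the preceding corollary with $M' = \Lambda^{-(s+\frac14)}M$, and the two constraints $\Lambda \gg (M')^4$ and $T \ll (M')^{-8}$ yield exactly the two terms in $\Lambda(T)$. Your additional remarks (that $\Lambda(T)\ge 1$, that $\|u_0\|_{H^s_{\Lambda(T)}}\le M$, and that the strict inequality $s>-\frac14$ is what makes the gain $\Lambda^{-(s+\frac14)}$ genuine) simply spell out details the paper leaves implicit.
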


Here it is only the principle that matters.  The exact exponents here
are less important since it is very unlikely that the $s = -\frac14$
result is sharp.

  The case $s = -\frac14$ is more delicate. There all
we can say is that
\[ 
\lim_{\Lambda \to \infty} \Vert u_0 \Vert_{H^{-\frac14}_\Lambda} =0 \quad \text{ for
} u_0 \in H^{-\frac14}. 
\]
Thus we obtain
\begin{cor}
  Let $u$ be a solution  to
  \eqref{nls} with initial data $u_0 \in L^2$. 
Then for all $T > 0$ the function $u$  satisfies
\begin{equation}  
\sup_{|t| \le T}  \Vert u(t) \Vert_{H^{-\frac14}_{\Lambda(T)}} \leq 1 
\end{equation} 
for some increasing function $\Lambda(T)$ which only depends on the 
the $H^{-\frac14}$ frequency envelope of $u_0$.
\end{cor}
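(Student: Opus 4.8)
The plan is to deduce this statement from the previous corollary (its $s=-\frac14$ case) by a soft limiting argument, the only analytic input being the already recorded fact that $\lim_{\Lambda\to\infty}\Vert u_0\Vert_{H^{-\frac14}_\Lambda}=0$ for every $u_0\in H^{-\frac14}$. Write $C$ for the implicit constant in the estimate \eqref{14tos} at $s=-\frac14$, so that whenever $\Vert u_0\Vert_{H^{-\frac14}_\Lambda}\le M$, $\Lambda\gg M^4$ and $T\ll M^{-8}$ one has $\sup_{|t|\le T}\Vert u(t)\Vert_{H^{-\frac14}_\Lambda}\le C\,\Vert u_0\Vert_{H^{-\frac14}_\Lambda}$.

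First I would fix $T>0$ and select the amplitude parameter. The only constraints on $M$ are the upper bounds $T\ll M^{-8}$ and (in order to upgrade $C\Vert u_0\Vert$ to $1$) $CM\le 1$; both are met by choosing $M=M(T):=c\,\min\{T^{-1/8},1\}$ with $c>0$ a sufficiently small absolute constant, and $M(T)$ is nonincreasing in $T$. Next, using $\lim_{\Lambda\to\infty}\Vert u_0\Vert_{H^{-\frac14}_\Lambda}=0$ together with the fact that $\Lambda\mapsto\Vert u_0\Vert_{H^{-\frac14}_\Lambda}$ is nonincreasing (the weight $(\Lambda^2+\xi^2)^{-\frac14}$ decreases in $\Lambda$), I would set
\[
\Lambda(T):=\inf\{\Lambda\ge 1:\ \Vert u_0\Vert_{H^{-\frac14}_\Lambda}\le M(T)\ \text{ and }\ \Lambda\gg M(T)^4\}.
\]
This is finite for every $T$, and since $M(T)$ is nonincreasing the defining threshold moves up as $T$ grows, so $\Lambda(T)$ is an increasing function of $T$.

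Because $\Vert u_0\Vert_{H^{-\frac14}_\Lambda}$ is, up to absolute constants, controlled by the tail $\sum_{\lambda\ge\Lambda}(\ldots)$ of any frequency envelope of $u_0$, one may equally define $\Lambda(T)$ through this envelope upper bound; with that choice $\Lambda(T)$ depends only on the $H^{-\frac14}$ frequency envelope of $u_0$, as required. Applying the previous corollary with these $M(T)$ and $\Lambda(T)$ then yields
\[
\sup_{|t|\le T}\Vert u(t)\Vert_{H^{-\frac14}_{\Lambda(T)}}\le C\,\Vert u_0\Vert_{H^{-\frac14}_{\Lambda(T)}}\le C\,M(T)\le 1,
\]
which is the claim.

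I do not expect any genuine obstacle here: the entire content of the corollary has already been purchased in Theorem \ref{main} and its rescaled consequences, and what remains is purely organizational. The only points requiring a little care are arranging the two smallness requirements on $M(T)$ simultaneously and verifying that the resulting $\Lambda(T)$ can be taken monotone and expressed through the frequency envelope rather than through $u_0$ itself; both are immediate from the monotonicity of the weighted norm in $\Lambda$.
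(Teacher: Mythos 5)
Your proposal is correct and matches the paper's own argument: the paper deduces this corollary from the preceding one precisely via the observation that $\lim_{\Lambda\to\infty}\Vert u_0\Vert_{H^{-\frac14}_\Lambda}=0$ for $u_0\in H^{-\frac14}$, leaving the bookkeeping (choice of $M(T)$, monotone choice of $\Lambda(T)$, envelope dependence) implicit. Your write-up simply fills in those organizational details, exactly as the authors intend.
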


The apriori estimates suffice to construct global weak solutions. 
Using the uniform bounds \eqref{mainxs} one may prove the following statement. 

\begin{theorem} Suppose that $u_0 \in H^s$, $s\ge -\frac14$. Then there 
exists a weak solution $u \in C(\mathbb{R}, H^s) $,
so that for all $T > 0$ we have\footnote{Here we drop the subscript $\Lambda = 1$ from the notation for the space-time norms.} $ \chi_T  u \in X^s \cap X^s_{le}$ and 
\begin{equation} 
\sup_{-T \le t \le T } \Vert u(t) \Vert_{H^s} + \Vert \chi_{[-T,T]}u  \Vert_{X^s\cap X^s_{le}} 
+ \Vert \chi_{[-T,T]} |u|^2 u \Vert_{Y^s\cap Y^s_{le}} 
\le C 
\end{equation} 
with $C$ depending on $T$ and on the $H^{-\frac14}$ 
frequency  envelope of $u_0$. 
\label{main3} 
\end{theorem}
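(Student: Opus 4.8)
The plan is to construct $u$ by a regularization-and-compactness argument, using the a priori bounds of Theorem~\ref{main} and its corollaries to control an approximating family of smooth solutions. First I would regularize the data: given $u_0 \in H^s$, set $u_0^n = P_{\le n} u_0$, so that $u_0^n \to u_0$ in $H^s$ and each $u_0^n \in L^2$ (indeed in $H^\infty$, being frequency-localized). Since \eqref{nls} is globally well-posed in $L^2$, each $u_0^n$ launches a global smooth solution $u^n \in C(\mathbb{R},L^2)$. The crucial point is that a single $H^{-\frac14}$ frequency envelope $\{c_\lambda\}$ for $u_0$ dominates all the $u_0^n$ simultaneously, since $\|P_\lambda u_0^n\|_{H^{-\frac14}} \le \|P_\lambda u_0\|_{H^{-\frac14}} \le c_\lambda$; hence the cutoff scale $\Lambda(T)$ produced by the third corollary is independent of $n$.

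Next I would extract uniform bounds. Fixing $T>0$ and applying the corollaries to each $u^n$ at the common scale $\Lambda(T)$ gives
\[
\sup_{|t|\le T} \|u^n(t)\|_{H^s} + \|\chi_{[-T,T]} u^n\|_{X^s \cap X^s_{le}} + \|\chi_{[-T,T]} |u^n|^2 u^n\|_{Y^s \cap Y^s_{le}} \le C
\]
with $C$ depending only on $T$ and $\{c_\lambda\}$, hence uniform in $n$. By weak-$*$ compactness in $L^\infty_t H^s$ together with weak compactness in the reflexive space-time spaces, I would pass to a subsequence along which $u^n \rightharpoonup u$ and $|u^n|^2 u^n \rightharpoonup F$. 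Lower semicontinuity of the norms under weak convergence transfers all of the bounds above to $u$ and $F$; in particular $\chi_T u \in X^s \cap X^s_{le}$, and the embedding $X^s \hookrightarrow C(\mathbb{R},H^s)$ yields $u \in C(\mathbb{R},H^s)$.

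The main task is to identify $F$ with $|u|^2 u$, since the cubic map does not commute with weak limits. Here the local energy component $X^s_{le}$ is decisive: it encodes a local gain of regularity, which combined with the control of $u^n_t = -i\bigl(u^n_{xx} \mp |u^n|^2 u^n\bigr)$ from the equation and the $Y^s$ bound on the nonlinearity produces, via an Aubin--Lions type argument, strong convergence of $u^n$ to $u$ on every bounded space-time region, in a topology (morally $L^3_{loc}$) strong enough that the cubic expression passes to the limit. Strong local convergence commutes with $u \mapsto |u|^2 u$, so $|u^n|^2 u^n \to |u|^2 u$ in $\mathcal{D}'$ and hence $F = |u|^2 u$. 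Passing to the limit in the distributional (or Duhamel) form of \eqref{nls} then shows that $u$ solves the equation, with nonlinearity in $Y^s \cap Y^s_{le}$.

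The hard part will be this local compactness step: weak convergence alone gives no grip on the nonlinear interaction, and at negative regularity one cannot invoke classical Rellich embeddings in $H^s$. The purpose of the local smoothing norm $X^s_{le}$ is exactly to recover the local regularity that, with the time-equicontinuity furnished by the equation, upgrades weak to strong local convergence. Once that is secured, the stated bounds and the continuity in time follow from the embedding properties of $X^s$, and globalizing in time is handled by a diagonal extraction over an exhausting sequence $T \to \infty$.
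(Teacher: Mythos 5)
The paper itself never writes out a proof of Theorem~\ref{main3}: it only records, just before the statement, that the space-time bounds \eqref{mainxs} ``insure that the equation is satisfied in the sense of distributions even for weak limits,'' deferring the actual compactness construction to the scheme of \cite{CCT}, \cite{MR2353092}. Your overall architecture --- regularization $u_0^n = P_{\le n}u_0$, the observation that a single $H^{-\frac14}$ frequency envelope $\{c_\lambda\}$ of $u_0$ dominates every $u_0^n$ so that $\Lambda(T)$ and hence all constants are uniform in $n$, weak-$*$ extraction, and lower semicontinuity --- is exactly that intended scheme. Two technical slips in this part: the $U^2$-based spaces are \emph{not} reflexive (one extracts distributional and weak-$*$ limits and recovers the bounds through the duality $(U^2)^* \simeq V^2$), and $X^s$ does \emph{not} embed into $C(\mathbb{R},H^s)$ ($U^2_\Delta$ functions may jump), so the continuity of $u$ must come from the Duhamel representation with nonlinearity in $Y^s$, not from an embedding.

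The genuine gap is in your identification step $F=|u|^2u$. You propose to upgrade, via Aubin--Lions, to strong convergence in (morally) $L^3_{\mathrm{loc}}$ and then pass to the limit in the pointwise cubic. At the endpoint $s=-\frac14$ this cannot work, because the available bounds do not even make the family $u^n$ uniformly bounded in $L^3_{\mathrm{loc}}$: the local smoothing norm gives $\|\chi_j u^n_\lambda\|_{L^2_{t,x}} \lesssim \lambda^{-\frac14}c_\lambda$, interval-summed Strichartz gives $\|u^n_\lambda\|_{L^6_{t,x}} \lesssim \lambda^{\frac{5}{12}}c_\lambda$, and interpolation yields only $\|u^n_\lambda\|_{L^3_{\mathrm{loc}}} \lesssim \lambda^{\frac1{12}}c_\lambda$, whose dyadic sum diverges for a general $\ell^2$ envelope. (Aubin--Lions does give strong $L^2_{t,x,\mathrm{loc}}$ convergence from your ingredients; it is precisely the upgrade to $L^3$ that fails.) Relatedly, the limit object $|u|^2u$ is not a pointwise product at this regularity: for $u$ merely in $X^s\cap X^s_{le}$ it is \emph{defined} by the trilinear extension of Proposition~\ref{pnonlin}, and lives in $Y^s\cap Y^s_{le}$, whose $DU^2_\Delta$ component need not be locally integrable. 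The identification must instead be done frequency by frequency: for each fixed dyadic $\lambda$ the pieces $u^n_\lambda$ converge strongly in $L^2_{\mathrm{loc}}$ by an elementary Aubin--Lions/Rellich argument (the frequency localization itself supplies the spatial regularity --- no local smoothing is needed here --- and the equation supplies time equicontinuity), while the contributions of all frequencies above a threshold $N$, both to $u^n$ and to $|u^n|^2u^n$, are small uniformly in $n$ by the trilinear estimates applied with frequency-envelope weights, using the off-diagonal decay the paper establishes in the proof of Proposition~\ref{pnonlin} (e.g.\ the high-high-low bound \eqref{xs-case2}). This is where the $X^s_{le}$, $Y^s_{le}$ norms genuinely enter the weak-solution argument --- as the currency in which the uniform tail bounds are expressed --- rather than as a source of $L^3_{\mathrm{loc}}$ compactness. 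With that replacement your outline goes through.
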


\subsection{Some heuristic considerations} 
The nonlinear Schr\"odinger equation is completely
integrable. Depending on whether we look at the focusing or the
defocusing problem, we expect two possible types of behavior for
frequency localized data. 

In the defocusing case, we expect the solutions to disperse
spatially. However, in frequency there should only be a limited
spreading, to a range below the dyadic scale, which depends only on
the $L^2$ size of the data. Precisely energy estimates show that for
frequency localized data with $L^2$ norm $\lambda$, frequency
spreading occurs at most up to scale $\lambda$.

In the focusing case, the expected long time behavior (or short time
for large data) is a resolution into a number of solitons (possibly
infinitely many) plus a dispersive part. The situation is somewhat
complicated by the fact that some of these solitons may have the same
speed, and thus considerable overlapping. The inverse scattering
formalism provides formulas for such solutions with many interacting
solitons.  Nevertheless it is instructive to consider first the case
of a single soliton, which in the simplest case has the form
\[   
u(x,t) = e^{-it} \sech (2^{-1/2} x).  
\]
Rescaling we get a soliton with $L^2$ norm $\lambda$, namely 
\[ 
  u^{\lambda} (x,t)= e^{-it\lambda^4} \lambda^2 \sech ( 2^{-1/2} \lambda^2 x ).   
\]
More soliton solutions can be obtained due to the Galilean
invariance. However, our function spaces here break the Galilean
invariance, so our worst enemies are the zero speed solitons.

The above solution is constant in time, up to a phase factor. It is
essentially localized to an interval of size $\lambda^{-2}$ in $x$, and of
size $\lambda^2$ in frequency. It also saturates our local energy estimates
in \eqref{mainxs} for $s = -\frac14$, exactly when $\Lambda = \lambda^4$.

In many cases error estimates for a   nonlinear semiclassical ansatz 
 for solutions  are available. An example is the initial data
\[
 u_0(x) = \lambda \sech(2^{-1/2} x) 
\]
where a semiclassical ansatz for an approximate solution is given by 
\[
 u(t,x) = \lambda A(x,t) e^{-i \lambda S(t,x) } 
\] 
where $\rho = A^2$ and  $\mu = A^2 \partial_x S$ satisfy the Whitham equations
\[ 
\rho_t + \lambda\partial_x \mu =0 \qquad \partial_t \mu + 
\lambda \partial_x (\mu^2/\rho \pm \rho^2/2) = 0
\]
with $+$ in defocusing and $-$ in the focusing case. The Whitham
equations are hyperbolic for the defocusing case and they can be
solved up to an time $T \sim \lambda^{-1}$, when singularities
corresponding to caustics occur.  Grenier \cite{MR1425123} has
justified this ansatz up to the time when caustics occur.

The Whitham equations are elliptic in the focusing case. Akhmanov,
Khokhlov and Sukhorukov \cite{AKS} realized that the implicit
equation
\begin{equation} \label{AKS}
\mu = -2 \lambda t \rho^2 \tanh\left( \frac{\rho x- \mu \lambda
    t}{\rho} \right), \qquad \rho = (1+ \lambda^2 t^2 \rho^2) \sech^2
\left( \rho x-\mu t \right) 
\end{equation}
defines a solution to the Whitham equation with the $\lambda \sech$
initial data.  The semiclassical ansatz for small semiclassical times
has been studied by Thomann \cite{MR2422717}.

The direct scattering problem has been solved by Satsuma and Yajima
\cite{MR0463733}. In particular, if $\lambda$ is an integer one
obtains a pure soliton solution with $\lambda$ solitons with velocity
$0$.  In this case the solution is periodic with period
$2$. Formula \eqref{AKS} seems to indicate that
the solution remains concentrated in an spatial area for size $\sim
\ln (1+\lambda)$.
The semiclassical
limit has been worked in a number of problems, see  Jin, Levermore and McLaughlin
\cite{MR1670048},  Kamvissis \cite{MR1751356},
Deift and Zhou \cite{MR1207209}.
and  Kamvissis, McLaughlin and Miller \cite{MR1999840}.

 These examples indicate that energy may spread over a large frequency interval even if the energy 
is concentrated at frequencies  $\lesssim 1$ initially, and there are solutions with 
energy distributed over a large  frequency interval with velocity zero. 
For the proof of our main result we use localization in frequency and space. These examples provide natural 
limits for the localization. This is reflected in the estimates and the definition of the function spaces.

\subsection{An overview of the proof}  We begin with a
dyadic Littlewood-Paley frequency decomposition of the solution $u$,
\[
u = \sum_{\lambda \geq \Lambda} u_\lambda, \qquad u_\lambda = P_\lambda u
\]
where $\lambda$ takes dyadic values not smaller than $\Lambda$, and
$u_\Lambda$ contains all frequencies up to size $\Lambda$.  Here the
multipliers $P_\lambda$ are standard Littlewood-Paley projectors. For
each such $\lambda$ we also use a spatial partition of unity on the
$\lambda^{1+4s}$ scale,
\begin{equation}\label{chij}
1 = \sum_{j \in \Z} \chi_j^\lambda(x), \qquad \chi_j^\lambda(x)=
\chi(\lambda^{-1-4s} x - j)
\end{equation}
with $\chi \in C^\infty_0(-1,1)$.
To prove the theorem we will use


\begin{itemize}
\item[(i)]  Two energy spaces, namely  a standard energy norm
\begin{equation}\label{edef} 
\| u \|_{\Es_\Lambda}^2 =\sum_{\lambda \geq \Lambda} \lambda^{2s}  
\| u_{\lambda}\|_{L^\infty L^2}^2
\end{equation}
and a local energy norm\footnote{For $s = -\frac14$ the spatial scale
  is one and this corresponds to the familiar gain of one half of a
  derivative. It may seem more natural to remove the $\partial_x$
  derivative and appropriately adjust the power of $\lambda$. This
  would be equivalent for all frequencies $\lambda >
  \Lambda$. However, in $u_\Lambda$ we are including all lower
  frequencies, which correspond to waves with lower group velocities
  and to a worse local energy bound, should the operator $\partial_x$
  not be present here. Based on the standard form of the local energy
  bounds for the linear Schr\"odinger equation one may still expect to
  be able to relax the $\partial_x$ operator almost to
  $\partial_x^\frac12$.  At least in the focusing case this is not
  possible; indeed, if $s = -\frac14$ then the local energy component
  of the bound \eqref{le-nonlin} below is saturated by the frequency
  $\Lambda^\frac12$ soliton.  }  adapted to the $\lambda^{1+4s}$
spatial scale,
\begin{equation}\label{ledef}
 \| u \|_{\LEs_\Lambda}^2 =
 \sum_{\lambda \geq \Lambda} \lambda^{-2s-2}   \sup_{j \in \Z}
\|\chi_j^\lambda \partial_x u_{\lambda}\|_{ L^2}^2
\end{equation}

\item[(ii)]  Two Banach spaces $X^s_\Lambda$ and $X^s_{\Lambda,le}$  
measuring the space-time regularity of the solution $u$.  The first one 
measures the dyadic parts of $u$ on small frequency dependent 
timescales, and is mostly similar to the spaces introduced in 
\cite{CCT}, \cite{MR2353092}. The second one is new, and measures
the spatially localized size of the solution on the unit time scale.
These spaces are  defined in the next section.

\item[(iii)] Two corresponding Banach spaces $Y^s_\Lambda$ and
  $Y^s_{\Lambda,le}$ measuring the regularity of the nonlinear term
  $|u|^2 u$.  These are also defined in the next section.
\end{itemize}

\noindent
The linear part of the argument is a straightforward consequence 
of our definition of the spaces, and is given by 
\begin{proposition}
The following estimates hold for solutions to \eqref{nls}:
\begin{equation}
\| u\|_{X^s_\Lambda} \lesssim  \| u \|_{\Es_\Lambda} + 
\| (i\partial_t -\Delta) u\|_{Y^s_\Lambda}
\label{en-lin}\end{equation}
respectively
\begin{equation}
\| u\|_{X_{\Lambda,le}^s} \lesssim  \| u \|_{\LEs_\Lambda} + 
\| (i\partial_t -\Delta) u\|_{Y_{\Lambda,le}^s}
\label{le-lin}\end{equation}
\label{plin}\end{proposition}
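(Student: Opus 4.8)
The plan is to prove the two bounds one dyadic frequency at a time, since all four norms are $\ell^2$ sums over the dyadic frequencies $\lambda\geq\Lambda$ and the Littlewood--Paley projectors $P_\lambda$ commute with the free evolution. For fixed $\lambda$ I would represent $u_\lambda$ by the Duhamel formula
\[
u_\lambda(t)=S(t-t_0)u_\lambda(t_0)-i\int_{t_0}^{t}S(t-\tau)f_\lambda(\tau)\,d\tau,
\qquad f=(i\partial_t-\Delta)u,
\]
with $S$ the free Schr\"odinger group, and estimate the homogeneous and inhomogeneous terms separately. Both inequalities then reduce to the two classical linear estimates for $S$ that the function spaces are designed to encode: $L^2$-based Strichartz bounds for \eqref{en-lin}, and the one-dimensional local smoothing (local energy decay) estimate
\[
\big\||\partial_x|^{1/2}S(t)g\big\|_{L^\infty_x L^2_t}\lesssim \|g\|_{L^2}
\]
for \eqref{le-lin}.

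For \eqref{en-lin} the $X^s_\Lambda$ norm at frequency $\lambda$ is a Strichartz-type quantity measured over a partition of $[0,1]$ into intervals of the frequency-dependent length built into its definition. Choosing $t_0$ to be the left endpoint of each such interval, the homogeneous term is controlled by $\|u_\lambda(t_0)\|_{L^2}\le\|u_\lambda\|_{L^\infty L^2}$ through the Strichartz inequality, while the inhomogeneous term is dispatched by the dual Strichartz estimate, which is exactly the information carried by $Y^s_\Lambda$; the retarded-integral refinement is supplied by the Christ--Kiselev lemma. Reassembling the intervals and then forming the $\lambda^{2s}$-weighted $\ell^2$ sum over $\lambda$ reproduces the right-hand side of \eqref{en-lin}.

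For \eqref{le-lin} I would use the same Duhamel splitting, now on the unit time scale and against the local smoothing estimate above together with its dual and retarded versions. The purpose of the weight $\lambda^{-2s-2}$, the derivative $\partial_x$, and the spatial scale $\lambda^{1+4s}$ appearing in \eqref{ledef} is precisely to match the half-derivative gain of local smoothing at frequency $\lambda$, so that the homogeneous contribution is absorbed into $\|u\|_{\LEs_\Lambda}$ and the Duhamel contribution into $\|f\|_{Y^s_{\Lambda,le}}$. The spatial cutoffs $\chi_j^\lambda$ commute with $P_\lambda$ only up to rapidly decaying tails, but these are harmless because the norm takes a $\sup_j$ (i.e.\ an $\ell^\infty_j$) over the boxes rather than summing them.

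The main obstacle is not any single linear estimate but the simultaneous bookkeeping of the three localizations---in frequency, in time, and in space. In \eqref{en-lin} the interval summation must be arranged so that the frequency-dependent time scale is matched to the Strichartz exponents with no loss, which is where the precise definitions of $X^s_\Lambda$ and $Y^s_\Lambda$ carry the weight. In \eqref{le-lin} the delicate point is the endpoint $s=-\tfrac14$: there the spatial scale is $O(1)$ and, as the footnote to \eqref{ledef} records, the zero-speed soliton saturates the bound, so the scaling balance between the derivative, the weight, and the cutoff scale must be exact, with no slack, while remaining compatible with the free propagation of frequency-$\lambda$ waves over a unit time interval. Once the spaces of the next section are in place, each inequality reduces to assembling these classical inputs, which is why the proposition is fairly described as a consequence of the definitions.
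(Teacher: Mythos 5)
Your proposal misidentifies what the spaces are and, as a consequence, what the proposition actually asserts, and the tools you invoke cannot prove it. The spaces $X_\lambda[I]=X^{0,1}[I]+\lambda^{-1-2s}U^2_\Delta[I]$ and $Y_\lambda[I]=|I|^{-\frac12}L^2+\lambda^{-1-2s}DU^2_\Delta[I]$ are not Strichartz spaces: Strichartz estimates (Corollary~\ref{upstr}) bound $L^p_tL^q_x$ norms \emph{by} $U^2_\Delta$ norms, i.e.\ they go in the wrong direction, so neither the Strichartz inequality, nor its dual, nor the Christ--Kiselev lemma can control the $X_\lambda$ norm of the Duhamel term by the $Y_\lambda$ norm of the forcing (the dual Strichartz bound consumes forcing in $L^{p'}_tL^{q'}_x$, which is not the information $Y_\lambda$ carries). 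What does prove \eqref{en-lin} is much simpler and purely structural: the atomic bound \eqref{RHS}, $\Vert u\Vert_{U^p}\le\Vert u_0\Vert_{L^2}+\Vert f\Vert_{DU^p}$, applied component by component --- a free wave on $J$ has $X^{0,1}[J]$ and $U^2_\Delta[J]$ norm $\Vert u_\lambda(t_0)\Vert_{L^2}$, the $|J|^{-1/2}L^2$ part of $Y_\lambda$ feeds directly into the definition \eqref{X01}, and the $DU^2_\Delta$ part feeds into $U^2_\Delta$ via \eqref{RHS}; this yields \eqref{diaden}, and hence \eqref{en-lin} after frequency and interval localization.

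The gap in your treatment of \eqref{le-lin} is more serious. That estimate is not an instance of the classical local smoothing inequality $\Vert |\partial_x|^{1/2}e^{it\Delta}g\Vert_{L^\infty_xL^2_t}\lesssim\Vert g\Vert_{L^2}$: its right-hand side is the local energy norm $\Vert u\Vert_{\LEs_\Lambda}$ of the solution itself (an $L^2_{t,x}$-based, spatially localized quantity), not the $L^2$ norm of the data, so no derivative gain from data is being claimed or used here --- the actual local energy decay is proved nonlinearly in Proposition~\ref{penergy} via the Morawetz-type functional, not in Proposition~\ref{plin}. Proposition~\ref{plin} is a transfer estimate, from $L^2_{t,x}$-localized norms to the $U^2$-based localized norms, and its proof needs two mechanisms absent from your argument: (i) averaging over the choice of initial time $t\in J$ within each interval, which is what converts the fixed-time datum $\Vert\chi^\lambda_k u_\lambda(t)\Vert_{L^2}$ in \eqref{diaden} into the space-time quantity $\lambda^{-4s}\Vert\chi^\lambda_j u_\lambda\Vert^2_{L^2[J]}$ that builds $\LEs_\Lambda$; and (ii) rapid off-diagonal decay of the kernel of $\chi^\lambda_j e^{it\Delta}P_\lambda\chi^\lambda_k$ for $|t|\le\lambda^{4s}$, $|j-k|\gg 1$, which is where the matched scales (group velocity $\lambda$ times $\delta t_\lambda$ equals $\delta x_\lambda$) enter, and which handles the commutation of the spatial cutoffs with the \emph{evolution}. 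The commutation of $\chi^\lambda_j$ with $P_\lambda$ that you mention is the easy part, and the $\langle j-k\rangle^{-N}$ tails are not rendered irrelevant by the $\sup_j$ --- they are exactly what makes the $k$-summation by Cauchy--Schwarz close. Finally, the lowest block $\lambda=\Lambda$ requires the separate step of running the argument on $\partial_x u_\Lambda$.
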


\noindent
To estimate the nonlinearity we need a cubic bound,

\begin{proposition}
Let $u \in X^s_\Lambda \cap X^s_{\Lambda,le}$. 
Then $|u|^2 u \in Y^s_\Lambda \cap Y^s_{\Lambda,le}$ and
\begin{equation}
\| |u|^2 u\|_{Y^s_\Lambda \cap Y^s_{\Lambda,le}} \lesssim 
\|u\|_{X^s_\Lambda\cap X^s_{\Lambda,le}}^3
\end{equation}
\label{pnonlin}\end{proposition}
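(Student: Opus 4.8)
The plan is to prove the two bounds
\[
\| |u|^2 u\|_{Y^s_\Lambda} \lesssim \|u\|_{X^s_\Lambda \cap X^s_{\Lambda,le}}^3, \qquad \| |u|^2 u\|_{Y^s_{\Lambda,le}} \lesssim \|u\|_{X^s_\Lambda \cap X^s_{\Lambda,le}}^3
\]
separately, reducing each to a single frequency-localized trilinear inequality. First I would expand $|u|^2 u = \sum_{\lambda_1,\lambda_2,\lambda_3} u_{\lambda_1} \bar u_{\lambda_2} u_{\lambda_3}$ by Littlewood--Paley and record the contribution to a dyadic output piece at frequency $\mu \ge \Lambda$. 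Since the $X$ and $Y$ norms all carry an $\ell^2$ structure over dyadic frequencies, the full estimate will follow from one dyadic trilinear bound together with a Schur/Cauchy--Schwarz summation over $\mu,\lambda_1,\lambda_2,\lambda_3$, \emph{provided} the dyadic bound comes with off-diagonal decay in the frequency ratios. By symmetry I order $\lambda_1 \ge \lambda_2 \ge \lambda_3$; the output then obeys $\mu \lesssim \lambda_1$, and there are two regimes: the \emph{non-resonant} one, $\mu \sim \lambda_1$, where no cancellation among the inputs is needed, and the \emph{cancellation} regime $\mu \ll \lambda_1 \sim \lambda_2$, where the two top factors must have nearly opposite frequencies to produce a low output. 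This is consistent with the resonance function $\Omega = 2(\xi_1-\xi_2)(\xi_2-\xi_3)$ for $u\bar u u$, whose vanishing set marks exactly the hard interactions.

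In every regime where at least two of the active frequencies are genuinely separated I would use the $X^s_\Lambda$ component alone. The mechanism is the one-dimensional bilinear Strichartz estimate: two Schr\"odinger waves whose frequencies are separated by $\sim \delta$ satisfy an $L^2_{t,x}$ product bound with a gain $\delta^{-\frac12}$, reflecting their different group velocities. Passing from the free flow to general elements of $X^s_\Lambda$ via the frequency-dependent time localization built into that space, this yields the dyadic trilinear bound with a power gain in the ratio of the separated frequencies. The high--low--low interactions, where a high-frequency output is created together with two low-frequency factors carrying large $H^s$ mass, are covered the same way, since $\lambda_1$ and $\lambda_3$ are then transverse.

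The main obstacle is the \emph{parallel}, near-resonant interaction in which $\mu,\lambda_1,\lambda_2,\lambda_3$ are all comparable and the relative velocities degenerate --- precisely the configuration saturated by the zero-speed soliton of the heuristics, where $\Omega$ is small. Here the bilinear gain vanishes and $X^s_\Lambda$ alone is insufficient, and this is exactly where the local energy enters. On this diagonal block I would estimate using $X^s_{\Lambda,le}$, whose spatial localization at scale $\lambda^{1+4s}$ together with the attached $\partial_x$ (hence gain of half a derivative) supplies the missing control: I would place one or two of the comparable-frequency factors in the local energy norm and the remaining factor(s) in an $L^\infty_x$-type norm controlled by the energy, using the local smoothing effect of the flow to absorb the derivative loss, and measure the result in $Y^s_{\Lambda,le}$. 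The spatial scale $\lambda^{1+4s}$ is chosen precisely to balance this diagonal interaction against the soliton example, which is why $s=-\frac14$ is borderline.

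Finally I would perform the dyadic summation. In the non-resonant regimes the frequency-ratio gains make the sums over $\mu,\lambda_1,\lambda_2,\lambda_3$ converge by Schur's test against the $\ell^2$ weights in the norms; on the diagonal the sum is essentially one-parameter and closes directly. Combining the $Y^s_\Lambda$ and $Y^s_{\Lambda,le}$ contributions gives the stated cubic estimate. The delicate point throughout is that for $s$ near $-\frac14$ there is no slack: the diagonal block must be treated with the sharp interplay between the frequency-dependent time scales of $X^s_\Lambda$ and the spatial localization of $X^s_{\Lambda,le}$, rather than with lossy majorizations.
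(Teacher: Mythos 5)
Your overall architecture (Littlewood--Paley decomposition, dyadic trilinear estimates via bilinear Strichartz, Schur/Cauchy--Schwarz summation) matches the paper's, but you have misidentified where the local energy norms are needed, and this is a genuine gap, not a stylistic difference. In the paper the fully ``parallel'' interaction $\mu \sim \lambda_1 \sim \lambda_2 \sim \lambda_3$, which you single out as the main obstacle, is handled in Case~1 using \emph{only} the $X^s_\Lambda$ norms: the $L^6$ Strichartz/$V^2$ bound \eqref{ineq:A2} together with the $|I|^{\frac12}$ weight built into $Y_\lambda$ over intervals of length $\lambda^{4s}$ closes that case with no local energy input at all. Conversely, the case you propose to dismiss with ``$X^s_\Lambda$ alone, since the frequencies are separated'' --- the high--high-to-low interaction $\{\lambda_1,\lambda_2,\lambda_3\}=\{\alpha,\mu,\mu\}$ with output $\lambda \ll \mu$ (Cases 2(a)--(d) of the paper) --- is exactly where $X^s_{\Lambda,le}$ is indispensable. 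The reason is not the pointwise trilinear bound (bilinear Strichartz does give the gain $\lambda^{\frac12}\mu^{-\frac12}$ per piece, as in \eqref{ineq:B2}), but the \emph{time-interval summation}: the output must be measured in $Y_\lambda$ over intervals $I$ of length $\lambda^{4s}$, while the frequency-$\mu$ factors are only controlled by $X^s_\Lambda$ via a supremum over their own intervals $I'$ of length $\mu^{4s}$. There are $(\mu/\lambda)^{4|s|}$ such $I' \subset I$, and summing with sup-control loses this factor outright; at $s=-\frac14$ this loss is $\mu/\lambda$, which overwhelms the bilinear gain $(\lambda/\mu)^{\frac12}$ and the estimate diverges. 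The paper's fix is that the definition \eqref{xse} of $X^s_{\Lambda,le}$ carries $\ell^2$ summability over time intervals (at fixed spatial cube), so the $I'$ sum is executed by Cauchy--Schwarz against \emph{both} high-frequency factors placed in $X^s_{\Lambda,le}$, yielding \eqref{quad-app2}--\eqref{xs-case2} with no loss. Nothing in your proposal supplies this mechanism, so your treatment of the separated-frequency regime, as written, fails for $s$ near $-\frac14$.

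A secondary but nontrivial omission: in the regimes $\lambda \ll \alpha$ (Cases 2(b)--(d)) the paper must exploit nonresonance through a modulation decomposition --- either the output or one input has modulation $\gtrsim \lambda\mu$ (or $\alpha\mu$) --- together with the two-component structure $Y_\lambda = |I|^{-\frac12}L^2 + \lambda^{-1-2s}DU^2_\Delta$, extension operators $E_I$ to reconcile modulation truncation with time localization (cf. \eqref{em2}), and special care for the badly placed complex conjugate in \eqref{ineq:D**2}. Your proposal's single dichotomy ``separated vs.\ parallel'' does not account for any of this; in particular the claim that off-diagonal decay plus Schur summation closes everything is unsupported precisely in the cases where the paper needs the logarithmic $DV^2$-to-$DU^2$ conversion and the $\min\{\alpha^{\frac12}\lambda^{-1}\mu^{\frac12},1\}$ gains to make the $\alpha,\mu$ sums converge.
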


\noindent
Finally, to close the argument we need to propagate the energy norms:

\begin{proposition}
Let  $u$ be a solution to \eqref{nls} with
\[ 
\| u\|_{l^2 L^\infty H^{s}_\Lambda} \ll 1. 
\]
Then  we have the energy bound
\begin{equation}
\| u\|_{l^2 L^\infty H^{s}_\Lambda} \lesssim \| u_0\|_{H^{s}_\Lambda} 
  + \|u\|_{X^s_\Lambda \cap X^s_{\Lambda,le}}^3,
\label{en-nonlin}\end{equation}
respectively the local energy decay 
\begin{equation}
\| u\|_{\LEs_\Lambda} \lesssim \| u_0\|_{H^{s}_\Lambda} 
  + \|u\|_{X^s_\Lambda \cap X^s_{\Lambda,le}}^3.
\label{le-nonlin}
\end{equation}
\label{penergy}\end{proposition}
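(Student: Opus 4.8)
The plan is to prove the two bounds \eqref{en-nonlin} and \eqref{le-nonlin} by frequency-localized energy identities, in each case reducing matters to a single quartic space-time form which is then dispatched by a resonant/non-resonant dichotomy. I work throughout with the dyadic pieces $u_\lambda = P_\lambda u$, which solve $(i\partial_t - \partial_x^2)u_\lambda = \mp P_\lambda(|u|^2 u)$, prove the estimate for a fixed $\lambda$, and then sum against the weights dictated by \eqref{edef} and \eqref{ledef}. The smallness hypothesis $\|u\|_{\Es_\Lambda}\ll 1$ is used at the end to absorb the energy factor produced on the right and to run the continuity/bootstrap argument.

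For \eqref{en-nonlin} I would differentiate the frequency-localized mass. The $\partial_x^2$ term is skew-adjoint and drops out, leaving
\[
\|u_\lambda(t)\|_{L^2}^2 = \|u_\lambda(0)\|_{L^2}^2 \mp 2\int_0^t \mathrm{Im}\,\langle P_\lambda(|u|^2 u),\, u_\lambda\rangle\, dt'.
\]
Multiplying by $\lambda^{2s}$ and summing over $\lambda\ge\Lambda$, everything reduces to the multilinear estimate
\[
\sum_{\lambda\ge\Lambda}\lambda^{2s}\left|\int_0^t \langle P_\lambda(|u|^2u),\, u_\lambda\rangle\, dt'\right| \lesssim \|u\|_{X^s_\Lambda\cap X^s_{\Lambda,le}}^3\,\|u\|_{\Es_\Lambda},
\]
after which \eqref{en-nonlin} follows from the data term and an $A^2\lesssim B^2 + CA$ absorption of the trailing $\|u\|_{\Es_\Lambda}$. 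For \eqref{le-nonlin} I would instead use a positive commutator (virial) identity: for each $\lambda$ I introduce the weighted momentum functional $\int a(x)\,\mathrm{Im}(\bar u_\lambda\,\partial_x u_\lambda)\,dx$, with $a$ bounded and increasing and $a'$ concentrated in a single spatial cell of scale $\lambda^{1+4s}$. Differentiating in time, the commutator of $a$ with $\partial_x^2$ produces the principal positive term $\int a'(x)|\partial_x u_\lambda|^2$, which upon optimizing over the cell reproduces the summand of \eqref{ledef}. The endpoint-in-time terms are bounded by $\|u\|_{\Es_\Lambda}^2$, hence via the bound just proved by the data plus $\|u\|_{X^s_\Lambda\cap X^s_{\Lambda,le}}^6$, and the nonlinearity again contributes a quartic form, now carrying the weight $a$, whose treatment mirrors the one above.

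The heart of the matter, and the main obstacle, is this quartic space-time form. After a Littlewood-Paley decomposition of the four factors I would split according to the size of the resonance function $\Omega = \xi_1^2 - \xi_2^2 + \xi_3^2 - \xi^2$, which on the hyperplane $\xi_1 - \xi_2 + \xi_3 = \xi$ factors as $\Omega = -2(\xi-\xi_1)(\xi-\xi_3)$. The exactly diagonal resonance, $\xi\in\{\xi_1,\xi_3\}$, produces a real contribution to the mass identity and therefore drops out under $\mathrm{Im}$. For the genuinely \emph{non-resonant} part, where $|\Omega|$ is large, I would integrate by parts in time, gaining a factor $\Omega^{-1}$ and substituting the equation for $\partial_t u$; this normal-form reduction trades the time integral for better-behaved forms estimable purely in $X^s_\Lambda$.

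The dangerous regime is the \emph{near-resonant} one, where two frequencies nearly coincide so that $|\Omega|$ is too small for the time integration by parts to be a gain; physically the interacting packets have almost equal group velocities and fail to separate. This is exactly where the new local energy norm enters: bounding the offending factors in $X^s_{\Lambda,le}$ and $\LEs_\Lambda$ lets one exploit the spatial co-location of the packets on the $\lambda^{1+4s}$ scale rather than relying on dispersion. Arranging the bookkeeping of these near-resonant interactions so that the scheme closes with the sharp weights — in particular so that the zero-speed soliton, which saturates \eqref{le-nonlin} at $s=-\tfrac14$, sits exactly at the borderline rather than producing a divergence — is the step I expect to demand the most care.
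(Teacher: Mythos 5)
Your high-level architecture --- a normal form to remove the quartic term in the energy identity, and a weighted momentum (virial) functional for the local energy --- is the same as the paper's, but the proposal goes wrong at the decisive point, namely the treatment of the resonant region. The paper performs no non-resonant/near-resonant splitting at all: the multiplier it must cancel, $a(\xi_0)+a(\xi_1)-a(\xi_2)-a(\xi_3)$, vanishes exactly on the resonant set $\{\xi_0=\xi_2\}\cup\{\xi_0=\xi_3\}$ of $P_4$, i.e.\ exactly where $\Omega=2(\xi_0-\xi_2)(\xi_0-\xi_3)$ vanishes, so the correction symbol $b_4$ in \eqref{b4} is \emph{bounded}, with full symbol-type regularity (Proposition~\ref{pbbounds}); the quartic term is cancelled identically and only sextic terms remain. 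You record only the measure-zero version of this fact (the exactly resonant contribution is real) and then propose to estimate a surviving near-resonant quartic piece ``directly'' with local energy norms. That mechanism cannot work: the enemy in the near-resonant, comparable-frequency regime is the zero-speed soliton at frequency $\Lambda^{1/2}$, which is spatially co-located by its very nature and \emph{saturates} the local energy norms, so spatial localization provides no gain for it; testing a generic bounded multiplier restricted to $\{|\Omega|\lesssim 1\}$ on this soliton already produces a logarithmic divergence ($\sim \log \Lambda$) against a right-hand side that stays of size one, and at $s=-\frac14$ there is no room for any loss. It is only the quantitative vanishing of the numerator (not local energy) that renders this region harmless. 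Relatedly, you place the local energy norms at the wrong spot in the proof of \eqref{en-nonlin}: the claim that the non-resonant part, after substituting the equation, is estimable ``purely in $X^s_\Lambda$'' is backwards. It is exactly the sextic, post-normal-form terms that require $X^s_{\Lambda,le}$ (Lemma~\ref{r6}), because the $X^s_\Lambda$ norm \eqref{xs} only gives a supremum over frequency-dependent time intervals, and the summation over the nested time intervals of the lower-frequency factors must be inherited from the $\ell^2$-in-time structure of the local energy spaces; the quartic correction itself needs only energy norms (Lemma~\ref{e1bd}).

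For \eqref{le-nonlin}, your virial functional is essentially the paper's $\tilde E_0$, but ``whose treatment mirrors the one above'' conceals the main new difficulty. Since the weight is not constant, the quartic term now lives on $\{\xi_0+\xi_1-\xi_2-\xi_3=\xi\}$ with $\xi\neq 0$ dual to the weight, where the resonance function no longer factors: one has $\xi_0^2+\xi_1^2-\xi_2^2-\xi_3^2=2(\xi_0-\xi_2)(\xi_0-\xi_3)-\xi(\xi_0-\xi_1-\xi_2-\xi_3)$. Only the part proportional to the resonance function can be removed by a quartic correction; the remainder, proportional to $\xi$ (i.e.\ carrying $\phi'$ after an integration by parts in $x$), is the term $\tilde C_4$ that must be estimated directly. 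This direct estimate is the hardest argument in the paper (Lemma~\ref{l:c4}): at frequencies between $\Lambda^{1/2}$ and $\Lambda$ bilinear estimates are unavailable, a Whitney-type decomposition in the frequency separation is needed, and it is precisely there --- not in any ``bookkeeping of near-resonant interactions'' --- that the zero-speed soliton makes $s=-\frac14$ the sharp threshold. Finally, two points your reduction skips: the boundary terms of your integration by parts in time are the quartic correction evaluated at times $0$ and $t$, which need the bound $|E_1(u)|\lesssim E_0(u)\,\|u\|_{H^{-1/2}_\Lambda}^2$; and because the time supremum sits \emph{inside} the dyadic sum defining $\|u\|_{\Es_\Lambda}$, these frequency-mixing boundary terms must be controlled at a different time for each $\lambda$ --- this is why the paper works with the symbol class $S_\Lambda$ and the envelope symbols $a_\mu$ in its concluding section, an issue your per-$\lambda$ mass identity never engages.
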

The bootstrap argument which leads from Propositions~\ref{plin},\ref{pnonlin}
and \ref{penergy} to Theorem~\ref{main} is straightforward and thus omitted.
Instead we refer the reader to the similar argument in \cite{MR2353092}.

We remark that in our set-up $s =-\frac14$ is the actual threshold
in the energy estimates in Proposition \ref{penergy}, though not in
the bounds for the cubic nonlinearity in Proposition \ref{pnonlin}.
In principle the former can be improved by adding further corrections
to the energy functional; we choose not to pursue this here.

The plan of the paper is as follows. In the next section we motivate
and introduce the spaces $X^s_\Lambda$, $X^s_{\Lambda,le}$,
$Y^s_\Lambda$ and $Y^s_{\Lambda,le}$, as well as establish the linear
mapping properties in Proposition~\ref{plin}. In Section~\ref{sbilin}
we discuss the linear and bilinear Strichartz estimates for solutions
to the linear equation.

The trilinear estimate in Proposition~\ref{pnonlin} is proved in
Section~\ref{snonlin}. In the last section we use a variation of the
I-method to construct a quasi-conserved energy functional and compute
its behavior along the flow, thus proving 
the first bound \eqref{en-nonlin} in Proposition~\ref{penergy}. A
modification of the same idea leads to the local energy decay estimate
\eqref{le-nonlin}.

\section{ The function spaces}
To understand what to expect in terms of the regularity of $u$ we
begin with some heuristic considerations.  If the initial data $u_0$
to \eqref{nls} satisfies $\|u_0\|_{L^2} \ll 1$ then the equation can
be solved iteratively using the Strichartz estimates on a unit time
interval.  We obtain essentially linear dynamics, by which we mean that the difference between the solution to the linear Schr\"odinger equation 
and NLS is small, and the solution $u$
belongs to the space $X^{0,1}[0,1]$ associated to the Schr\"odinger
equation (see the definition in \eqref{X01} below).

Let $s < 0$. Consider now NLS with initial data $u_0 \in H^{s}$,
localized at frequency $\lambda$. Then the initial data satisfies
$\|u_0\|_{L^2} \lesssim \lambda^{-s}$. By rescaling the small $L^2$
data result we conclude that the evolution is still described by
linear dynamics up to the shorter time $\lambda^{4s}$.

We expect the frequency localization of the solution to be somewhat
robust.  Then it is natural to consider a decomposition of the
solution $u$ into its dyadic components $u_\lambda = P_\lambda u$ and
to measure the $u_\lambda$ component uniformly in $\lambda^{4s}$ time
intervals.

Linear waves with frequency $\lambda$ travel with group velocity  $2\lambda$,
therefore they cover a distance of about $\lambda^{1+4s}$ within a
$\lambda^{4s}$ time interval. Hence we can naturally partition
frequency $\lambda$ waves with respect to a grid of size
\[
\delta t_\lambda = \lambda^{4s}, \qquad \delta x_\lambda = \lambda^{1+4s}
\]

Correspondingly we have the spatial partition of unity \eqref{chij}.
We remark that the scale of this partition increases with $\lambda$
for $s > - 1/4$, and decreases with $s$.  It is independent of
$\lambda$ exactly for $s = -1/4$, which makes the  threshold $s =-\frac14$ 
 very convenient technically.

Now we  introduce the function spaces for the solutions
$u$.  Following an idea of M. Christ, given an interval $I=[t_0,t_1]$ 
we define  the space 
\begin{equation}
 \| \phi\|_{X^{0,1}[I]}^2 = \|\phi(t_0)\|_{L^2}^2 +
 |I|    \| (i \partial_t-\Delta) \phi\|_{L^2[I]}^2
\label{X01}\end{equation}
Ideally we would like to place the dyadic pieces $u_\lambda$ of $u$ in
such a space on the $\delta t_\lambda$ scale. However, this does not
quite work and we have to introduce a slightly larger space
\[
X_\lambda [I] = X^{0,1}[I]+\lambda^{-1-2s} U^2_\Delta[I].
\]
The $U^p$ and $V^p$ spaces are a refinement of the Fourier restriction
spaces of Bourgain. We refer to the next section and to
\cite{MR2094851}, \cite{MR2526409} for a discussion of them. They are
related to the Bourgain spaces through the embeddings
\begin{equation}
X^{0,\frac12,1} \subset U^2_\Delta \subset  X^{0,\frac12,\infty}
\label{embed}\end{equation}
where the above $X^{s,b}$ type norms are defined by
\begin{equation} \label{XSB} 
\Vert u \Vert_{X^{0,\frac12,1}} = \sum_{\mu } 
\mu^{\frac12} \Vert Q_\mu u \Vert_{L^2},
\qquad  \Vert u \Vert_{X^{0,\frac12,\infty}} = \sup_{\mu } 
\mu^{\frac12} \Vert Q_\mu u \Vert_{L^2}. 
\end{equation} 
and the modulation localization multipliers $Q_\mu$ select the dyadic
region $ \{|\tau+\xi^2| \sim \mu\}$.

Let us compare the two parts of the $X_\lambda$ norms.
First, H\"older's inequality implies 
\[ 
\| (i \partial_t-\Delta) \phi\|_{L^1(I, L^2)} \le |I|^{\frac12} \| (i \partial_t-\Delta) \phi\|_{L^2[I]}
\]
and hence (again referring to the next section for a discussion of the
$U^p$ spaces)
\[ 
\Vert u \Vert_{U^1_\Delta(I)} \le   \Vert   u \Vert_{X^{0,1}(I)} 
\]
and by the embedding properties of the $U^p$ spaces
\[ 
\Vert u_\lambda \Vert_{U^2_\Delta(I)} \le  \Vert u_\lambda  \Vert_{X^{0,1}(I)}. 
\]
Thus we obtain 
\begin{equation}
\label{em} 
 \Vert u_\lambda \Vert_{U^2_\Delta(I)} \lesssim \|u_\lambda  \|_{X_{\lambda}[I]}
\end{equation} 
This bound will suffice for most of our estimates.

On the other hand, the structure of the $X_\lambda$ norms is so that 
we expect to have better bounds at high modulations ($\gtrsim \lambda^2$, e.g.).
However, some care is required in order to make this precise, because
modulation localizations do not commute with interval localizations.
To address this issue we introduce extension operators $E_I$ which 
take a function $u \in X_\lambda[I]$ to its extension $E_I u$ solving 
the homogeneous Schr\"odinger equation outside $I$  with matching
data at the two endpoints of $I$. By definition we have
\[
\| (i \partial_t -\Delta) E_I u_\lambda\|_{\lambda^{-2s} L^2 + \lambda^{-1-2s}
DU_\Delta^2} \lesssim \| u_\lambda\|_{X_\lambda[I]}, \qquad |I| = \lambda^{4s}
\]
This implies the high modulation bound
\begin{equation}
\label{em2} 
   \Vert Q_{\geq \sigma} E_I u_\lambda  \Vert_{L^2} 
\lesssim \min\{\lambda^{-2s} \sigma^{-1}, \lambda^{-1-2s} \sigma^{-\frac12}\}
 \Vert u_\lambda \Vert_{X_\lambda[I]}. 
\end{equation} 
 We remark that the
balance between the norms of the two component spaces in $X_\lambda$
is achieved at modulation $\lambda^2$. Since the $X_\lambda$ norm is
only used on frequency $\lambda$ functions, it follows that the
$U^2_\Delta$ component of the space $X_\lambda$ is only relevant in
the elliptic region $\{|\tau - \xi^2| \sim |\tau|+|\xi|^2\}$.

Now we can define the $X^s_\Lambda$ norm in a time interval $I$ by
\begin{equation}
\| u\|_{X^s_\Lambda[I]}^2 = \sum_{\lambda \geq \Lambda} \lambda^{2s}  
\sup_{|J| = \lambda^{4s}, 
J\subset I}
\| u_\lambda\|_{X_\lambda[J]}^2  
\label{xs}\end{equation}
In the sequel we will mostly drop the interval $I=[0,1]$ from the notation. 
We remark that within each interval $J$ we have square summability 
on the $ \lambda^{1+4s}$ spatial scale as well as on any larger scale,
\begin{equation}
   \sum_{j \in \Z}  
\| \chi^\mu_j u_\lambda\|_{X_\lambda[J]}^2 \lesssim \| u_\lambda\|_{X_\lambda[J]}^2, 
\quad |J| = \lambda^{4s}, \quad \mu \gtrsim \lambda,
\label{xs-sq-sum}\end{equation}
This can be viewed as a consequence of the fact that frequency
$\lambda$ waves travel with speed $\lambda$.  We refer to
Lemma~\ref{l:xs-sq-sum} in the next section for more details.

Next we introduce the related local smoothing space
$X_{\Lambda,le}^s$, where the above summation with respect to spatial
intervals is replaced by a summation with respect to time intervals:
\begin{equation}
 \|u\|_{X_{\Lambda,le}^s[I]}^2 =\sum_{\lambda} \lambda^{2s-2} \sup_{j \in \Z} 
\sum_{  J \subset I}^{|J| = \lambda^{4s}}
\|\chi^\lambda_j \partial_x u_\lambda\|_{X_{\lambda}[J]}^2
\label{xse}\end{equation}
Here and below the $J$ summation is understood to be over a partition of $I$
into intervals $J$ of the indicated size.

To measure the regularity of the nonlinear term we begin with 
\[
Y_\lambda[I] = |I|^{-\frac12} L^2 + \lambda^{-1-2s} DU_\Delta^2[I]
\]
which is exactly the output of the linear Schr\"odinger operator
$i \partial_t -\Delta$ applied to $X_\lambda[I]$ functions
(see next section for a discussion of $DU^2$).  We use it to define 
the $Y^s_\Lambda$ norm 
\begin{equation}
\| f\|_{Y^s_\Lambda[I]}^2 = \sum_{\lambda \geq \Lambda} 
\lambda^{2s}   \sup_{|J| = \lambda^{4s}, \ J \subset I}
\|\chi_J f_\lambda\|_{Y_\lambda}^2  
\label{ys}\end{equation}
as well as its local energy counterpart
\begin{equation}
 \|f\|_{Y_{\Lambda,le}^s[I]}^2 =\sum_{\lambda \geq \Lambda} \lambda^{2s-2} \sup_{j \in \Z}
 \sum_{  J \subset I}^{|J| = \lambda^{4s}}
 \| \chi_J \chi^\lambda_j \partial_x f_\lambda\|_{Y_\lambda}^2
\label{yse}\end{equation}

\section{$U^p$ and $V^p$ spaces}

We sketch the construction of the spaces $U^p$ and $V^p$ and their
properties and refer to \cite{MR2094851}, \cite{MR2526409} for more
details. 

Both $U^p$ and $V^p$ are spaces of functions in $\R$ which take values
in a Hilbert space, $L^2(\R)$ in our case.  To define them we first
introduce the class $\P$ of finite partitions of $\R$ into intervals.
A partition $\p \in \P$ is determined by the endpoints of the
intervals, which are identified with a finite increasing sequence
$(t_n)_{n=0,N(\p)}$ with $t_0 = -\infty$ and $t_N{(\p)}=
\infty$.

Let $1\le p <\infty$. A $U^p$ atom is a right continuous 
piecewise constant function 
\[
a = \sum_{n=2}^{N(\p)}  1_{[t_{n-1},t_{n})} a_n, \qquad \sum \|a_n\|^p = 1, \quad 
\]
on the real line associated to a partition $\p = (t_n) \in \P$ of the real
line.
We define $U^p$ as the atomic space consisting of all functions for which the 
following norm is finite:
\[ 
\Vert u \Vert_{U^p} = \inf \left\{ \sum _{k=1}^\infty \lambda_k : f = \sum \lambda_k a_k, a_k
\text{ atoms }, \lambda_k \ge 0   \right\}
\]
This is a Banach space of bounded right continuous functions  which 
have limit zero as $t$ goes to $-\infty$.

The space of bounded $p$ variation functions $V^p$ consists of all functions
 on $\mathbb{R}$  for which the following norm is finite, 
\[ 
\Vert u \Vert_{V^p}^p = \sup_{\p \in \P} \sum_{n=2}^{N-1}
\Vert u(t_{n})-u(t_{n-1}) \Vert^p 
\]
In this formula we set $u(\infty) = 0$.  This is a Banach space of
bounded functions.  The functions in $V^p$ have lateral limits
everywhere.  By $V^p_{rc}$ we denote the subspace of right continuous
functions in $V^p$ which have limit zero as $t$ goes to $\infty$.

Both spaces are invariant under monotone reparametrizations of $\mathbb{R}$ 
and can therefore be easily defined for intervals.  Given any partition 
$\p=(t_n) \in \P$ of the real line we also have the interval summability bounds
\begin{equation}
\| u\|_{U^p}^p \lesssim \sum_{n=1}^{N(\p)} \|1_{[t_{n-1},t_n)} u\|_{U^p}^p.
\label{isum}\end{equation}

Clearly, if  $1\le p < q$ then
\[ 
U^p \subset U^q,\quad  V^p \subset V^q ,\quad  U^p \subset V^p
\]
We also have
the nontrivial relation
\[
V^p_{rc} \subset U^q \qquad 1\le p < q
\]
More precisely, as proved in \cite{MR2526409}, there exists $\delta>0$
such that for each $v \in V^p_{rc}$ and $M>1$ there exists $u\in U^p$,
$w \in U^q$  such that $v = u+w$ and
\begin{equation} 
 M^{-1} \Vert u \Vert_{U^p} + e^{\delta M} \Vert w \Vert_{U^q} \lesssim \Vert v
\Vert_{V^p_{rc}}. 
\end{equation} 

 The relation to of $U^p$ and $V^p$ to Besov spaces is as follows: 
\begin{equation} \label{besov}
  \dot B^{1/p,p}_1 \subset U^p \subset V^p_{rc} \subset \dot B^{1/p,p}_\infty. 
\end{equation} 
In particular the norms of $u$ in $U^p$ and $V^p_{rc}$ are equivalent
if $\hat u$ is supported in a fixed dyadic frequency interval.
Moreover if $Q_\mu$ denotes the projection to a dyadic frequency range
we have
\[ 
\Vert Q_\mu u \Vert_{L^p} \le c \mu^{-1/p} \Vert u \Vert_{V^p}. 
\]

There is also a duality relation: Let $1 < p,q < \infty$ be dual
exponents. Then
\begin{equation}  U^p \times V^q \ni (u,v) \to B(u,v)= \int u v_t dt 
\label{dual} 
\end{equation} 
defines an isometry $V^q \to (U^p)^* $.  The notation in \eqref{dual}
is formal, and making it rigorous requires considerable care, for
which we refer to \cite{MR2526409}.  We use the spaces $DU^p$ and
$DV^p_{rc}$ as distributional time derivatives of functions in $U^p$ and
$DV^p_{rc}$. This is possible since for $1/p+1/q=1$
\[ \Vert u \Vert_{U^p} = \sup \{ B(u,v): v \in C^\infty_0: \Vert v
\Vert_{V^q}=1\} \]
and 
\[ \Vert v \Vert_{V^q_{rc}} = \sup \{ B(u,v): u \in C^\infty_0: \Vert u
\Vert_{U^p}=1\}. \]
 
All these constructions apply to functions with
values in Hilbert spaces. Of particular interest is the Hilbert space 
$l^2$. A short reflection shows that 
\begin{equation} \label{ortho} 
 U^2(l^2)\subset l^2 U^2, 
\end{equation}
where on the left 
we have $l^2$ sequences with values in $U^2$, and on the right $l^2$ valued 
functions in $U^2$. Similarly $l^2 V^2 \subset  V^2l^2$.

We use Bourgain's recipe to adapt the function spaces to the 
Schr\"odin\-ger equation
\[ 
\Vert u \Vert_{U^p_{\Delta}} = \Vert e^{-it\Delta } u(t) \Vert_{U^p} 
\]
and
\[ 
\Vert v \Vert_{V^p_{\Delta}} = \Vert e^{-it\Delta } v(t) \Vert_{V^p}. 
\]
We will always consider right continuous functions and we drop 
$rc$ from the notation.

The relation to the $X^{s,b}$ spaces can be seen from 
estimate \eqref{besov}, which also implies the high modulation estimate
\[ 
\Vert Q_\mu u \Vert_{L^2} \le c \mu^{-1/2} \Vert u \Vert_{V^2_\Delta} 
\]
where, as before, $Q_\mu$ is the projection to modulations of size
$\mu$, namely the frequency region $\{ \tau +\xi^2 \approx \mu \}$.

The next lemma, combined with a rescaling argument, proves 
the bound \eqref{xs-sq-sum}. 

\begin{lemma}\label{l:xs-sq-sum}
  Let $\lambda >0$, and $I$ an interval with $ |I| \lambda \leq 1
  $.  If $u$ is frequency localized in $ [-\lambda, \lambda]$ then
  the following estimates hold\footnote{Here $\chi_j = \chi_j^1$; the superscript $1$ is omitted.} :
\begin{equation}
 \sum \Vert \chi_j  u \Vert_{U^2_\Delta[I]}^2 \lesssim 
\Vert u \Vert_{U^2_\Delta[I]}^2, 
\qquad  \sum \Vert \chi_j  f \Vert_{DU^2_\Delta[I]}^2 \lesssim 
\Vert f \Vert_{DU^2_\Delta[I]}^2, 
 \label{ixsum}\end{equation}
 \begin{equation}
 \Vert u \Vert_{V^2_\Delta[I]}^2 \lesssim \sum \Vert \chi_j u \Vert_{V^2_\Delta[I]}^2 .
\qquad 
 \Vert f \Vert_{DV^2_\Delta[I]}^2 \lesssim \sum \Vert \chi_j f \Vert_{DV^2_\Delta[I]}^2 .
 \label{dixsum}\end{equation} 
\end{lemma}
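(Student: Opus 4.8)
The plan is to exploit finite speed of propagation at unit scale: a wave with frequency in $[-\lambda,\lambda]$ moves with group velocity $\lesssim \lambda$, so over a time interval with $|I|\lambda\le 1$ it travels a distance $\lesssim 1$, which is exactly the scale of the cutoffs $\chi_j=\chi(\cdot-j)$. Quantitatively, I would first record the kernel estimate for the truncated propagator: for $|t|\le \lambda^{-1}$ and data with frequency $\lesssim\lambda$, the operator $e^{it\Delta}$ has a Schwartz kernel concentrated in $\{|x-y|\lesssim 1\}$ with rapidly decaying tails. The two consequences I need are: (a) the conjugated multiplier $M_j(t):=e^{-it\Delta}\chi_j e^{it\Delta}$ differs from $\chi_j$, on frequency $\lambda$ functions, by an operator that is spatially localized near $j$ and has norm $\lesssim\lambda|t|\le 1$ — indeed $\partial_t M_j = e^{-it\Delta}\,i[\chi_j,\Delta]\,e^{it\Delta}$ with $[\chi_j,\Delta]=-\chi_j''-2\chi_j'\partial_x$ of norm $\lesssim\lambda$ on such functions; and (b) the family $\{e^{it\Delta}(\chi_j w_j)\}_j$, with each $w_j$ of frequency $\lesssim\lambda$ and $|t|\le\lambda^{-1}$, is almost orthogonal in $L^2$, since the $\chi_j$ are finitely overlapping and the evolution preserves localization near $j$ up to Schwartz tails, so that $\|\sum_j e^{it\Delta}\chi_j w_j\|_{L^2}^2\lesssim\sum_j\|\chi_j w_j\|_{L^2}^2$.

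For the reverse $V^2$ bound in \eqref{dixsum} I would argue directly from the difference structure of the $V^2_\Delta$ norm. Fixing a partition $(t_n)$ and writing $u=\sum_j\chi_j u$, the increment $e^{-it_n\Delta}u(t_n)-e^{-it_{n-1}\Delta}u(t_{n-1})$ splits as $\sum_j g_j^{(n)}$ with $g_j^{(n)}=e^{-it_n\Delta}\chi_j u(t_n)-e^{-it_{n-1}\Delta}\chi_j u(t_{n-1})$; since $|t_n|,|t_{n-1}|\le|I|\le\lambda^{-1}$, each $g_j^{(n)}$ is localized near $j$, so (b) gives $\|\sum_j g_j^{(n)}\|_{L^2}^2\lesssim\sum_j\|g_j^{(n)}\|_{L^2}^2$. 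Summing over $n$, interchanging the $n$- and $j$-sums, and taking the supremum over partitions yields $\|u\|_{V^2_\Delta}^2\lesssim\sum_j\|\chi_j u\|_{V^2_\Delta}^2$. The same template proves the synthesis bound $\|\sum_j\chi_j v_j\|_{V^2_\Delta}^2\lesssim\sum_j\|v_j\|_{V^2_\Delta}^2$; here I additionally need the multiplier bound $\|\chi_j v\|_{V^2_\Delta}\lesssim\|v\|_{V^2_\Delta}$, which follows by writing the increments of $M_j(t)W(t)$ (with $W=e^{-it\Delta}v$) as $M_j(t_n)[W(t_n)-W(t_{n-1})]+[M_j(t_n)-M_j(t_{n-1})]W(t_{n-1})$: the first term is controlled by $\|M_j\|_{\mathrm{op}}\lesssim 1$, while for the second I use (a) to obtain $\sum_n\|M_j(t_n)-M_j(t_{n-1})\|_{\mathrm{op}}^2\|W(t_{n-1})\|_{L^2}^2\lesssim\lambda^2\big(\sum_n|t_n-t_{n-1}|^2\big)\|W\|_{L^\infty L^2}^2\lesssim(\lambda|I|)^2\|v\|_{V^2_\Delta}^2$, the quadratic smallness being exactly what $|I|\lambda\le 1$ provides.

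The forward $U^2$ bound in \eqref{ixsum} I would prove via the atomic structure. For a single $U^2_\Delta$ atom $a=\sum_n \mathbf 1_{[t_{n-1},t_n)}e^{it\Delta}\phi_n$ with $\sum_n\|\phi_n\|_{L^2}^2=1$, I replace $\chi_j a$ on each constancy interval by the genuine atomic piece $e^{it\Delta}(\chi_j\phi_n)$, the error being $e^{it\Delta}(M_j(t)-\chi_j)\phi_n$, which by (a) is localized near $j$ and of size $\lesssim\lambda|t|\,\|\phi_n\|_{L^2(\text{near }j)}\le\|\phi_n\|_{L^2(\text{near }j)}$. Both the main and error contributions are then summable in $\ell^2_j$ using the finite overlap of the $\chi_j$, giving $\sum_j\|\chi_j a\|_{U^2_\Delta}^2\lesssim\sum_n\|\phi_n\|_{L^2}^2=1$; the general case follows by Minkowski's inequality in $\ell^2_j$ applied to an atomic decomposition of $u$. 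The remaining two estimates — the forward $DU^2$ bound in \eqref{ixsum} and the reverse $DV^2$ bound in \eqref{dixsum} — I would obtain by duality, using the duality relation \eqref{dual}, which gives $(DU^2_\Delta)^*=V^2_\Delta$ and $(U^2_\Delta)^*=DV^2_\Delta$ under the space-time pairing: the forward $DU^2$ bound is the adjoint of the synthesis $V^2$ bound just proved, while the reverse $DV^2$ bound follows from the forward $U^2$ bound together with finite overlap (writing $\langle f,u\rangle=\sum_j\langle\chi_j f,u\rangle$ and pairing $\chi_j f$ only against the $O(1)$ neighboring $\chi_{j'}u$). The passage between $\ell^2 U^2$ and $U^2(\ell^2)$ here is absorbed by the embedding \eqref{ortho}.

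The main obstacle is precisely the non-commutation of the spatial cutoff with the Schr\"odinger flow. Everything hinges on making quantitative the assertion that, on frequency $\lambda$ data and over time $|I|\le\lambda^{-1}$, multiplication by $\chi_j$ commutes with $e^{it\Delta}$ up to errors that are simultaneously (i) spatially localized near $j$, so that they remain $\ell^2_j$-summable after the partition of unity, and (ii) small, of relative size $\lambda|I|\le 1$ — and, for the variation norms, summable in $\ell^2$ over the partition points, which requires the stronger quadratic gain $(\lambda|I|)^2\le 1$. This is where the two hypotheses that $u$ is frequency-localized in $[-\lambda,\lambda]$ and that $|I|\lambda\le 1$ are used; once the commutation estimate is in place the rest is the bookkeeping of atoms, differences, and duality sketched above, and the bound \eqref{xs-sq-sum} then follows by rescaling to the $\lambda^{1+4s}$ spatial scale.
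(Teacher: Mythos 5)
Your proposal is correct, and its engine is the same as the paper's: the commutator $[\chi_j,\Delta]=-\chi_j''-2\chi_j'\partial_x$, of norm $\lesssim\lambda$ on functions at frequency $\lesssim\lambda$, whose Duhamel contribution over an interval with $\lambda|I|\le 1$ is $O(1)$ — linearly for the $U^2$ bounds, and quadratically (via $\sum_n|t_n-t_{n-1}|^2\le|I|^2$) for the variation bounds. Indeed your $\partial_t M_j=e^{-it\Delta}i[\chi_j,\Delta]e^{it\Delta}$ is exactly the paper's source term $f_j$ in \eqref{deffj}, and your atom-by-atom treatment of the first bound in \eqref{ixsum} (reduce to one step via \eqref{isum}, absorb the error $[\chi_j,e^{it\Delta}]\phi_n$ through $U^1\subset U^2$) coincides with the paper's. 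Where you genuinely diverge is the routing of the other three estimates. The paper proves the $DU^2$ bound in \eqref{ixsum} directly, writing $\chi_j f=(i\partial_t-\partial_x^2)(\chi_j u)-f_j$ and reusing the $U^2$ bound plus the $L^1L^2$ estimate on $f_j$; you instead dualize against a $V^2$ synthesis bound, which forces you to also prove the multiplier bound $\|\chi_j v\|_{V^2_\Delta}\lesssim\|v\|_{V^2_\Delta}$ — more machinery for the same conclusion, though sound. Conversely, for the first bound in \eqref{dixsum} the paper applies the exact spatial square function $\sum_j\|\chi_j w\|_{L^2}^2\approx\|w\|_{L^2}^2$ to each increment and then pays the propagator commutator $\sum_j\|[\chi_j,e^{iT\Delta}]u_0\|_{L^2}^2\lesssim(T\lambda)^2\|u_0\|_{L^2}^2$, whereas you split the increment as $\sum_j g_j^{(n)}$ first and invoke almost-orthogonality of the evolved localized pieces — slightly slicker, postponing the quadratic gain to your multiplier bound. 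Three details to tighten: (i) your use of $|t_n|\le|I|\le\lambda^{-1}$ presumes $I$ starts at $0$; normalize by time translation, which changes the $U^2_\Delta[I]$, $V^2_\Delta[I]$ norms only by the fixed unitary $e^{it_0\Delta}$; (ii) your claim (b), as literally stated with a single common time $t$, is trivial by unitarity and finite overlap — what your application actually needs is almost-orthogonality of differences taken at two distinct times, and that is where the kernel locality for $|t|\le\lambda^{-1}$ genuinely enters; (iii) in the duality steps the test functions $v_j\in V^2_\Delta$ are not a priori frequency localized, as the almost-orthogonality requires, so you must first insert a fattened projection $\tilde P_\lambda$ (harmless, since $\chi_j f$ is frequency localized up to rapidly decaying tails and $\tilde P_\lambda$ is bounded on $V^2_\Delta$). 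With these repairs your argument closes and, like the paper's, rescales to give \eqref{xs-sq-sum}.
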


\begin{proof}
  It suffices to verify the first inequality for $U^p$  atoms. Furthermore,
due to the first bound in \eqref{isum}, we only need to prove it for each step in
an atom. Thus consider a solution $u$ to the homogeneous Schr\"odinger 
equation in a subinterval $J=[a,b) \subset I$. For each $j \in \Z$ we write
an equation for $\chi_j u$, namely
\[
(i \partial_t - \partial_x^2)(\chi_j u)= f_j 
\]
where the right hand sides $f_j$ are given by
\begin{equation}
f_j = - \partial_x^2 \chi_j u - 2 \partial_x
\chi_j \partial_x u 
\label{deffj}\end{equation}
and can be estimated as follows:
\[
\begin{split}
\sum_{j} \|f_j\|_{L^1_t L^2_x}^2
 \lesssim   |J| \sum_{j} \|f_j\|_{ L^2_{t,x}}^2 
\lesssim |J|(\| u\|_{L^2_{t,x}}^2+ \| \partial_x u\|_{L^2_{t,x}}^2)
 \lesssim   \lambda^2 |J|^2 \|u\|_{L^\infty_t L^2_x}^2 
\end{split}
\]
Then, using $\lambda |J| \leq 1$,  we have 
\[
\sum_j \| \chi_j u\|_{U^2[J]}^2 \lesssim \sum_j \| \chi_j u\|_{U^1[J]}^2
\lesssim \sum_j \| \chi_j u(a)\|_{L^2}^2 +  \|f_j\|_{L^1_t L^2_x}^2
\lesssim \|u(a)\|_{L^2}^2
\]
The proof of the first bound in \eqref{ixsum} is completed by summing
over the intervals $J$.

For the second bound we consider $f$ of the form $f = (i \partial_t
- \partial_x^2)u$ with $u \in U^2_\Delta[I]$. Then we can write
$\chi_j f$ as
\[
\chi_j f = (i \partial_t - \partial_x^2) (\chi_j u) - f_j 
\]
with $f_j$ as in \eqref{deffj}. For the first term we use the first bound in 
\eqref{ixsum}, and for the second we bound $f_j$ in $L^1 L^2$ as above.

Next we consider the first bound in \eqref{dixsum}. For a partition $\sigma = (t_n)$ 
of the interval $I$ we need to estimate the sum:
\[
S = \sum_n \|e^{i(t_{n+1} -t_n) \Delta} u(t_n) - u(t_{n+1})\|_{L^2}^2 
\]
We have
\[
\begin{split}
S \approx  \ & \sum_j  \sum_n  \|\chi_j(e^{i(t_{n+1} -t_n) \Delta} u(t_n) - u(t_{n+1}))\|_{L^2}^2 
\\ \lesssim  & \sum_j \sum_n  \|e^{i(t_{n+1} -t_n) \Delta}
(\chi_j u(t_n)) - \chi_j u(t_{n+1})\|_{L^2}^2 
 +  \sum_j \sum_n  \|[ \chi_j, e^{i(t_{n+1} -t_n) \Delta}] u(t_n) \|_{L^2}^2 
\end{split}
\]
and the first term is directly estimated in terms of the right hand side in 
\eqref{dixsum}. For the second term we will establish a stronger bound,
namely
\[
 \sum_j  \|[ \chi_j, e^{i T \Delta}] u_0 \|_{L^2}^2 \lesssim (T \lambda)^2 \|u_0\|_{L^2}^2
\]
whenever $u_0$ is frequency localized in $[-\lambda,\lambda]$.
This suffices since we have $\sum (t_{n+1} -t_n)^2 \lesssim |I|^2$.
Denoting $u(t) = e^{it \Delta} u_0$ we write
\[
(i \partial_t - \partial_x^2)[ \chi_j, e^{i T \Delta}] u_0 = f_j
\]
with $f_j$ again as in \eqref{deffj}. Hence
\[
\sum_j  \|[ \chi_j, e^{i T \Delta}] u_0 \|_{L^2}^2 \lesssim 
\sum_j \|f_j\|_{L^1L^2}^2
\]
and the bound for $f_j$ in $L^1 L^2$ is the same as above.
The proof of the first part of \eqref{dixsum} is concluded.

For the second part of \eqref{dixsum} we write $f$ in the form 
$f = (i \partial_t - \partial_x^2)u$ in $I = [a,b]$ with $u(a)= 0$.
Arguing by duality (see \eqref{dual}), from the first part of \eqref{ixsum}
applied to solutions for the homogeneous equation
we obtain the uniform energy bound 
\[
\|u\|_{L^\infty L^2}^2 \lesssim  \sum \Vert \chi_j f \Vert_{DV^2_\Delta[I]}^2 
 \]
The rest of the argument is similar to the one 
for the second part of \eqref{ixsum}.
\end{proof}

We conclude this section with the proof of Proposition~\ref{plin}.
For $t \geq 0$ we consider  the solution $u$ to the inhomogeneous equation
\[ 
i \partial_t u + \Delta u = f \qquad u(0)= u_0 
\]
 We set $u(t)=0$ for $t <0$. Then from the definitions  we immediately obtain 
the linear bound
\begin{equation} 
\label{RHS}
 \Vert u \Vert_{U^p} \le \Vert u_0 \Vert_{L^2} + \Vert f \Vert_{DU^p}. 
\end{equation} 

We now consider the bound \eqref{en-lin}. The frequency localization
commutes with the Schr\"o\-din\-ger operator, and it suffices to verify
\eqref{en-lin} for a fixed dyadic frequency range $\lambda$. We can
also restrict our attention to a time interval $J=[a,b]$ with 
$|J| = \lambda^{4s}$. There we need to show that
\begin{equation}
\| u_\lambda \|_{X_\lambda[J]} \lesssim \|u_\lambda(a)\|_{L^2} + 
\|f_\lambda\|_{Y_\lambda[J]}, \qquad (i \partial_t -\Delta) u_\lambda = f_\lambda
\label{diaden}\end{equation}
which follows directly from \eqref{RHS} and the definitions of the norms.

  For the second estimate \eqref{le-lin} we again localize to a dyadic frequency
$\lambda$. Let us first consider $\lambda > \Lambda$; there it takes the form
\[
\sup_{j \in \Z} \sum_{J \subset I}^{|J|=\lambda^{4s}}
\| \chi_j^\lambda u_\lambda \|_{X_\lambda[J]}^2 \lesssim 
\sup_{j \in \Z} \sum_{J \subset I}^{|J|=\lambda^{4s}} 
\left(\lambda^{-4s} \|\chi_j^\lambda u_\lambda\|_{L^2[J]}^2 + 
\|\chi_j^\lambda f_\lambda\|_{Y_\lambda[J]}^2\right)
\]
This in turn follows after integration over $t \in J$, 
 $J$ summation and $k$ summation 
from the next estimate:
\[
 \| \chi_j^\lambda u_\lambda \|_{X_\lambda[J]} \lesssim 
\sum_{k \in \Z} \langle j-k \rangle^{-N} \left(
\|\chi_k^\lambda u_\lambda(t)\|_{L^2} + 
\|\chi_k^\lambda f_\lambda\|_{Y_\lambda[J]}\right)
\]
This is equivalent to considering an inhomogeneous Cauchy problem 
in an interval $J=[a,b]$ with $|J| = \lambda^{4s}$,
\[
 (i \partial_t -\Delta) u_\lambda = P_\lambda \chi_k^\lambda f, \qquad  
u_\lambda(a) = \chi_k^\lambda u_0^\lambda
\]
and proving that 
\[
\| \chi_j^\lambda u_\lambda\|_{X_\lambda[J]} \lesssim 
 \langle j-k \rangle^{-N} \left( \|u_0\|_{L^2} + \|f_\lambda\|_{Y_\lambda[J]}\right)
\]
For $j=k+O(1)$ this is a direct consequence of \eqref{diaden}. For $j$ away from $k$
this follows from favorable bounds on the kernel $K_{jk}$ 
of $\chi_j^\lambda e^{it\Delta} P_\lambda \chi_j$, which satisfies
the rapid decay bounds
\[
|\partial_x^\alpha \partial_y^\beta \partial_t^\gamma K_{j,k}(t,x,y)| 
\leq c_{\alpha \beta \gamma}  \lambda^{-N}  \langle j-k \rangle^{-N},
\qquad |t| \leq \delta t_\lambda, \quad |j-k| \gg 1
\]
If $\lambda = \Lambda$ we apply the same argument to $\partial_x u_\Lambda$. 

\section{Linear and bilinear estimates}
\label{sbilin}

Solutions to the homogeneous equation,
\begin{equation}
iv_t -\Delta v = 0, \qquad v(0) = v_0
\label{hom}\end{equation}
 satisfy the Strichartz estimates:

\begin{proposition}
Let $p,q$ be indices satisfying 
\begin{equation}
\frac{2}p + \frac{1}q = \frac12, \qquad 4 \leq p \leq \infty
\label{pq}\end{equation}
Then the solution $u$ to \eqref{hom} satisfies 
\[
\|v\|_{L^p_t L^q_x} \lesssim \|v_0\|_{L^2}
\]
\end{proposition}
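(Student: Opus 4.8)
The plan is to prove this via the classical $TT^\ast$ argument of Strichartz, resting on two facts about the propagator $S(t)$ solving \eqref{hom} (so that $v = S(t)v_0$, and $S(t)$ is unitary on $L^2$, formally $S(t)=e^{it\Delta}$ up to sign). The first is mass conservation $\|S(t)v_0\|_{L^2_x} = \|v_0\|_{L^2_x}$; the second is the one-dimensional dispersive bound
\[
\| S(t) v_0\|_{L^\infty_x} \lesssim |t|^{-\frac12} \|v_0\|_{L^1_x},
\]
which follows from the explicit Gaussian convolution kernel of $S(t)$. These two facts already deliver the two extreme admissible pairs: $(p,q)=(\infty,2)$ is just mass conservation, while $(p,q)=(4,\infty)$ will sit at the borderline of the argument below.

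First I would interpolate the two displayed bounds (Riesz--Thorin, between $L^2\to L^2$ with factor $|t|^0$ and $L^1\to L^\infty$ with factor $|t|^{-1/2}$) to obtain, for $2\le q\le\infty$,
\[
\| S(t) g \|_{L^q_x} \lesssim |t|^{-(\frac12 - \frac1q)} \|g\|_{L^{q'}_x}, \qquad \tfrac1q+\tfrac1{q'}=1 .
\]
By the standard duality reduction it then suffices to bound the operator $TT^\ast F = \int S(t-s) F(s)\,ds$ from $L^{p'}_t L^{q'}_x$ to $L^p_t L^q_x$. Applying Minkowski's integral inequality together with the pointwise-in-time bound above reduces this to the scalar convolution estimate
\[
\Big\| \int |t-s|^{-(\frac12-\frac1q)} \|F(s)\|_{L^{q'}_x}\, ds \Big\|_{L^p_t} \lesssim \| F\|_{L^{p'}_t L^{q'}_x},
\]
which is precisely the Hardy--Littlewood--Sobolev inequality in the time variable with convolution exponent $\alpha = \frac12 - \frac1q$.

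The key bookkeeping step is to verify that the HLS exponents match the admissibility relation \eqref{pq}. The condition $\frac2p+\frac1q=\frac12$ says exactly $\alpha = \frac12-\frac1q = \frac2p$, and HLS requires $\frac1{p'} = \frac1p + \alpha$, i.e. $1-\frac1p = \frac1p + \frac2p$, which is again \eqref{pq}; it also demands $0<\alpha<1$ and $1<p'<p<\infty$, i.e. $2<p<\infty$, all satisfied for $4\le p<\infty$. The remaining endpoint $p=\infty$ (equivalently $q=2$) is handled directly by mass conservation, with no convolution needed.

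The main obstacle --- really the only point requiring attention in one dimension --- is the corner pair $(p,q)=(4,\infty)$, where $q'=1$ and one invokes the full strength of the dispersive decay $|t|^{-1/2}$ together with HLS at $\alpha=\frac12$. Since $\frac12$ lies strictly inside $(0,1)$, this stays safely in the non-endpoint regime; the genuinely delicate Keel--Tao endpoint $p=2$ never arises here, as \eqref{pq} would then force $\frac1q=-\frac12<0$. Thus beyond the two input estimates the proof is entirely a matter of interpolation, $TT^\ast$ duality, and exponent arithmetic, with the duality reduction being the only step where a little care is needed to make the formal computation rigorous.
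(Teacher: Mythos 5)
The paper itself gives no proof of this proposition: it is stated as the classical one-dimensional Strichartz estimate and used as a black box, so there is no internal argument to compare against. Your $TT^\ast$ route (unitarity plus the $|t|^{-1/2}$ dispersive bound, Riesz--Thorin interpolation, duality, Hardy--Littlewood--Sobolev in the time variable) is the standard textbook proof, and its architecture is sound: the Minkowski reduction to a scalar convolution estimate is correct, $(p,q)=(\infty,2)$ is rightly split off as pure mass conservation, and you correctly observe that the delicate endpoint $p=2$ never arises under \eqref{pq}.

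However, the step you yourself single out as ``the key bookkeeping step'' is carried out incorrectly. The condition for $g \mapsto |t|^{-\alpha} \ast g$ to map $L^{p'}_t$ into $L^p_t$ is not $\frac{1}{p'}=\frac{1}{p}+\alpha$ but
\[
1+\frac{1}{p} \;=\; \frac{1}{p'}+\alpha, \qquad \text{equivalently} \qquad \frac{1}{p'}=\frac{1}{p}+(1-\alpha),
\]
i.e.\ the convolution gains $1-\alpha$, not $\alpha$, on the Lebesgue scale. With $\alpha=\frac{2}{p}$ (which is what \eqref{pq} gives) the correct condition reads $1+\frac{1}{p}=1-\frac{1}{p}+\frac{2}{p}$, an identity in $p$, so the exponents do match for the whole admissible range. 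By contrast, the relation you wrote, together with your verification ``$1-\frac{1}{p}=\frac{1}{p}+\frac{2}{p}$,'' holds only when $p=4$; it is not ``again \eqref{pq}.'' So, read literally, your argument confirms the HLS hypotheses only at the single pair $(4,\infty)$ --- where, coincidentally, $\alpha=1-\alpha=\frac12$, which is why the slip is invisible there. This is a one-line repair rather than a failure of the method: after replacing your scaling relation by the correct one, the remaining requirements $0<\alpha<1$ and $1<p'<p<\infty$ hold for all $4\le p<\infty$ exactly as you state, and the proof is complete.
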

In particular we note the pairs of indices $(\infty,2)$, $(6,6)$ and
$(4,\infty)$.  As a straightforward consequence we have

\begin{cor}
 Let $p,q$ be indices satisfying  \eqref{pq}. 
Then 
\[
\|v\|_{L^p_t L^q_x} \lesssim \|v\|_{U^p}.
\]
\label{upstr}\end{cor}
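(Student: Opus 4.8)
The plan is to use the atomic structure of $U^p$ together with a transference argument for free Schr\"odinger waves. Since the $U^p_\Delta$ norm is defined as an infimum over atomic decompositions and $L^p_t L^q_x$ is a genuine norm, by the triangle inequality it suffices to prove the stated bound for a single $U^p_\Delta$ atom, i.e.\ a function $v$ for which $e^{-it\Delta} v(t)$ is a $U^p$ atom. Unwinding the definition, such a $v$ is piecewise a free wave: there is a partition $(t_n) \in \P$ and data $(a_n)$ with $\sum_n \|a_n\|_{L^2}^p = 1$ so that $v(t) = e^{it\Delta} a_n$ for $t \in [t_{n-1},t_n)$ (and $v \equiv 0$ before the first jump).

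On each interval $I_n = [t_{n-1},t_n)$ the function $v$ coincides with the global free evolution $e^{it\Delta} a_n$, so the preceding Strichartz estimate (valid for indices satisfying \eqref{pq}) applies on all of $\R$ and in particular, after restriction, on $I_n$:
\[
\|v\|_{L^p_t(I_n) L^q_x} \le \|e^{it\Delta} a_n\|_{L^p_t(\R)L^q_x} \lesssim \|a_n\|_{L^2}.
\]
Because the intervals $I_n$ are disjoint and tile $\R$, the $L^p_t$ norm is exactly $\ell^p$-additive in $n$, whence
\[
\|v\|_{L^p_t L^q_x}^p = \sum_n \|v\|_{L^p_t(I_n) L^q_x}^p \lesssim \sum_n \|a_n\|_{L^2}^p = 1.
\]
This gives $\|v\|_{L^p_t L^q_x} \lesssim 1$ for every atom, and summing over an atomic decomposition of a general $v \in U^p_\Delta$ yields the claim.

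The argument is essentially immediate; the one point requiring attention is the matching of exponents, which I would flag explicitly. It is crucial that the atom normalization $\sum_n \|a_n\|_{L^2}^p = 1$ carries the \emph{same} exponent $p$ that governs the time integrability in the Strichartz norm — this is precisely why the corollary pairs $U^p$ with the $L^p_t$ Strichartz index. The spatial exponent $q$ plays no role beyond the validity of the underlying fixed-time homogeneous bound, and, in contrast to the square-summation estimates of Lemma~\ref{l:xs-sq-sum}, no interval-localization or commutator bounds are needed here since the time intervals enter only through the trivial $\ell^p$ concatenation of $L^p_t$.
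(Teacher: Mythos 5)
Your proof is correct and is exactly the argument the paper has in mind: the paper's one-line proof (``it suffices to do it for atoms'') is precisely your reduction to $U^p_\Delta$ atoms, followed by the homogeneous Strichartz estimate on each step of the atom and the $\ell^p$-in-$n$ concatenation of the $L^p_t$ norm, which matches the atom normalization $\sum_n \|a_n\|_{L^2}^p = 1$. You have simply written out the details, including the exponent matching, that the paper leaves implicit.
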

The proof is straightforward, since it suffices to do it for atoms. 
By duality we also obtain

\begin{cor}
 Let $p,q$ be indices satisfying  \eqref{pq}.
Then 
\[
 \|v\|_{DV^{p'}}  \lesssim \|v\|_{L^{p'}_t L^{q'}_x} 
\]
\label{lpdual}\end{cor}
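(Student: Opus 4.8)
The plan is to derive this estimate from the forward Strichartz bound in Corollary~\ref{upstr} by a standard duality argument, using the identification of $DV^{p'}_\Delta$ as the dual of $U^p_\Delta$. First I would record that $p'$ and $q'$ are the exponents conjugate to the admissible pair $(p,q)$ from \eqref{pq}, so that $1/p + 1/p' = 1$ and $1/q + 1/q' = 1$; in particular the pair $(p,q)$ appearing in Corollary~\ref{upstr} is exactly the one against which we will pair, which is what makes the H\"older step close.

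The core of the argument is the duality relation \eqref{dual}, which I would use in the form
\[
\|v\|_{DV^{p'}_\Delta} = \sup\left\{ \left| \int\!\!\int v\, \overline{w}\, dx\,dt \right| : w \in U^p_\Delta,\ \|w\|_{U^p_\Delta} \le 1 \right\},
\]
the supremum being taken over a dense class of smooth test functions. Here I am using that the bilinear form $B$ in \eqref{dual}, once the $DV$ element is written as a distributional time derivative, is nothing but the space-time $L^2$ pairing, and that this pairing is invariant under the simultaneous conjugation by the unitary group $e^{it\Delta}$; hence the duality $V^q \cong (U^p)^\ast$ transfers to the Schr\"odinger-adapted spaces, giving $DV^{p'}_\Delta \cong (U^p_\Delta)^\ast$ under the same space-time pairing.

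With this in hand the estimate is immediate: for any admissible test function $w$ I would apply H\"older's inequality in space-time followed by Corollary~\ref{upstr},
\[
\left| \int\!\!\int v\, \overline{w}\, dx\,dt \right| \le \|v\|_{L^{p'}_t L^{q'}_x}\, \|w\|_{L^p_t L^q_x} \lesssim \|v\|_{L^{p'}_t L^{q'}_x}\, \|w\|_{U^p_\Delta},
\]
and taking the supremum over all $w$ with $\|w\|_{U^p_\Delta} \le 1$ yields the claimed bound.

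The H\"older–Strichartz chain is routine; the only genuinely delicate point is the duality identification itself. Making the pairing in \eqref{dual} rigorous — interpreting the $DV^{p'}$ element against a $U^p$ function, checking that the supremum over smooth test functions recovers the full norm, and verifying the $e^{it\Delta}$-invariance that transports the duality to the adapted spaces — requires the care developed in \cite{MR2526409}. Once that machinery is granted, no further work is needed, which is why the statement can be asserted as a straightforward consequence of the preceding corollary.
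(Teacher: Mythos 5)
Your proof is correct and is precisely the argument the paper intends: the paper derives Corollary~\ref{lpdual} from Corollary~\ref{upstr} ``by duality,'' and your write-up simply fills in that duality step, using \eqref{dual} (transported to the adapted spaces via the unitarity of $e^{it\Delta}$) to realize the $DV^{p'}_\Delta$ norm as a supremum of pairings against unit-norm $U^p_\Delta$ functions, followed by H\"older and the $U^p$ Strichartz bound. No discrepancy with the paper's approach.
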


The second type of estimates we use are bilinear:

\begin{proposition}\label{bilinear} 
 Let $\lambda > 0$. Assume that $u,v$ are solutions to 
the homogeneous Schr\"odinger equation \eqref{hom}.
Then 
\begin{equation}
  \| P_{> \lambda}( u v)\|_{L^2} \lesssim \lambda^{-\frac12} \|u_0\|_{L^2} \|v_0\|_{L^2}
\end{equation}

\end{proposition}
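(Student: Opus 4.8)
The plan is to pass to the space-time Fourier transform and reduce the bound to a transversality estimate for the two characteristic parabolas. Writing the two free evolutions as
\[
u(x,t)=\int e^{i(x\xi_1+t\xi_1^2)}\hat u_0(\xi_1)\,d\xi_1,\qquad
v(x,t)=\int e^{i(x\xi_2+t\xi_2^2)}\hat v_0(\xi_2)\,d\xi_2,
\]
the space-time Fourier transform of the product is supported on $\{\xi=\xi_1+\xi_2,\ \tau=\xi_1^2+\xi_2^2\}$ and is given by
\[
\widehat{uv}(\xi,\tau)=\int \hat u_0(\xi_1)\,\hat v_0(\xi-\xi_1)\,
\delta\!\left(\tau-\xi_1^2-(\xi-\xi_1)^2\right)d\xi_1 .
\]
By Plancherel in $(x,t)$ it then suffices to bound $\int_{|\xi|>\lambda}\int_\R |\widehat{uv}(\xi,\tau)|^2\,d\tau\,d\xi$ by $\lambda^{-1}\|u_0\|_{L^2}^2\|v_0\|_{L^2}^2$, after which taking square roots gives the claim.

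First I would fix the output frequency $\xi$ and parametrize the characteristic curve by the single variable $\xi_1$, with $\xi_2=\xi-\xi_1$ and $\tau=g(\xi_1):=\xi_1^2+\xi_2^2$. The point is that
\[
g'(\xi_1)=2\xi_1-2\xi_2=2(\xi_1-\xi_2),
\]
so the difference of the two group velocities is exactly the Jacobian of the map $\xi_1\mapsto\tau$. Resolving the delta measure (there are at most two roots for each $\tau$), applying Cauchy--Schwarz in the root-sum, and then substituting $d\tau=|g'(\xi_1)|\,d\xi_1$ yields, for each fixed $\xi$,
\[
\int_\R|\widehat{uv}(\xi,\tau)|^2\,d\tau
\lesssim \int_\R \frac{|\hat u_0(\xi_1)|^2\,|\hat v_0(\xi-\xi_1)|^2}{|\xi_1-\xi_2|}\,d\xi_1 .
\]
Integrating in $\xi$ over $\{|\xi|>\lambda\}$ and changing variables back to $(\xi_1,\xi_2)$ reduces the whole estimate, via Fubini, to controlling the weight $|\xi_1-\xi_2|^{-1}$ on the region that actually contributes to the output.

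The hard part will be the last step: the gain $\lambda^{-1/2}$ is precisely the square root of the reciprocal Jacobian, so everything hinges on the lower bound $|\xi_1-\xi_2|\gtrsim\lambda$ on the frequency support of the output. This is a genuine transversality statement and is where the output localization must be used. For the interaction relevant to the cubic term, where one factor is conjugated, the output frequency is the frequency \emph{difference} $\xi_1-\xi_2$, so the hypothesis $|\xi|>\lambda$ \emph{is} the transversality bound and the argument closes at once; more generally, inserting a Littlewood--Paley decomposition $u=\sum u_{N_1}$, $v=\sum v_{N_2}$ one sees that off the diagonal $N_1\sim N_2$ one has $|\xi_1-\xi_2|\sim\max(N_1,N_2)\gtrsim\lambda$, with enough room for an almost-orthogonal summation in the dyadic blocks. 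The delicate configurations are the near-parallel ones, $\xi_1\approx\xi_2$ with both large, where the two parabolas become tangent and $g'$ degenerates; these are exactly the interactions that the output-frequency localization is designed to exclude, and isolating them — rather than merely invoking $\max(N_1,N_2)>\lambda$ — is the crux of the proof.
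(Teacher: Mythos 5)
Your Fourier-side setup is exactly the paper's own argument: resolve the delta measure along the characteristic set, identify the Jacobian $2|\xi_1-\xi_2|$ of the map $(\xi_1,\xi_2)\mapsto(\xi,\tau)$, change variables, and reduce the whole estimate to a lower bound $|\xi_1-\xi_2|\gtrsim\lambda$ on the support of the output. Moreover, the point at which you stop is precisely the point at which the paper's write-up is too quick: its final display simply restricts the $(\xi_1,\xi_2)$ integration to $\{|\xi_1-\xi_2|\gtrsim\lambda\}$, whereas for the unconjugated product $uv$ (both factors on the same parabola $\tau=\xi^2$) the constraint actually furnished by $P_{>\lambda}$ is $|\xi_1+\xi_2|>\lambda$, which forces no lower bound whatsoever on $|\xi_1-\xi_2|$.

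However, the program sketched in your last paragraph cannot be completed, because its guiding claim --- that the output localization ``is designed to exclude'' the tangential configurations $\xi_1\approx\xi_2$ with both frequencies large --- is false for $uv$, and with it the proposition as literally stated. Take $\hat u_0=\hat v_0=\chi_{[2\lambda,3\lambda]}$. Then $\widehat{uv}$ is supported in $\{|\xi|\in[4\lambda,6\lambda]\}$, so $P_{>\lambda}(uv)=uv$, while Plancherel together with your own delta-resolution formula gives
\[
\|uv\|_{L^2_{t,x}}^2\;\approx\;\iint\frac{|\hat u_0(\xi_1)|^2\,|\hat u_0(\xi_2)|^2}{|\xi_1-\xi_2|}\,d\xi_1\,d\xi_2\;=\;\infty,
\]
the divergence being logarithmic along the diagonal $\xi_1=\xi_2$ (this is the familiar failure of the $L^4_{t,x}$ Strichartz estimate in one space dimension), whereas the right-hand side of the proposition equals $\lambda^{1/2}<\infty$. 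So no refinement of the dyadic decomposition can close this step. What is true --- and what your penultimate paragraph in fact proves --- is the conjugated estimate $\|P_{>\lambda}(u\bar v)\|_{L^2}\lesssim\lambda^{-1/2}\|u_0\|_{L^2}\|v_0\|_{L^2}$, where the output frequency is $\xi_1-\xi_2$ and hence $|\xi|>\lambda$ \emph{is} the transversality bound, together with the separated-input bound $\|u_\lambda v_\mu\|_{L^2}\lesssim\mu^{-\frac12}\|u_\lambda(0)\|_{L^2}\|v_\mu(0)\|_{L^2}$ for $\lambda\ll\mu$. Every use of the proposition in the paper has one of these two structures (e.g.\ $\tilde P_{\lambda_2}(u_{\lambda_3}\bar u_{\lambda_4})$ in the proof of the trilinear estimate, or bilinear bounds on separated dyadic blocks), and the statement should be read, or restated, accordingly. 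In short: your diagnosis of where the difficulty lies is sharper than the paper's own proof, but as a proof of the estimate as written your attempt has a gap that cannot be filled, because the unconjugated, non-separated case you defer is exactly the case in which the estimate fails.
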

\begin{proof}
In the Fourier space we have 
\[
\hat u(\tau,\xi) = \hat u_0(\xi) \delta_{\tau - \xi^2}, \qquad
\hat v(\tau,\xi) = \hat v_0(\xi) \delta_{\tau - \xi^2}
\]
Then 
\[
\widehat{uv}(\tau,\xi) = \int_{\xi_1+\xi_2 = \xi}  
 \hat u_0(\xi_1) \hat v_0(\xi_2) \delta_{\tau - \xi_1^2-\xi_2^2} d \xi_1
\]
which gives
\[
\widehat{uv}(\tau,\xi) =  \frac1{2|\xi_1-\xi_2|} (\hat u_0(\xi_1) \hat v_0(\xi_2) 
+ \hat  u_0(\xi_2) \hat v_0(\xi_1))
\]
where $\xi_1$ and $\xi_2$ are the solutions to
\[
\xi_1^2 +\xi_2^2 = \tau, \qquad \xi_1+\xi_2 = \xi
\]
We have 
\[
d\tau d\xi = 2|\xi_1-\xi_2| d\xi_1 d\xi_2
\]
therefore we obtain
\[
\|P_{>\lambda} (uv)\|_{L^2}^2 \le   \int_{|\xi_1-\xi_2| \gtrsim \lambda} |\hat u_0(\xi_1)|^2 |\hat v_0(\xi_2)|^2
|\xi_1-\xi_2|^{-1}d\xi_1 d\xi_2
\]
The conclusion follows.
\end{proof}

As a consequence we obtain

\begin{cor}
The following estimates hold:
\begin{equation}
   \Vert P_\lambda (u v) \Vert_{L^2} \lesssim
 \lambda^{-1/2} \Vert u \Vert_{U^2_\Delta} \Vert v \Vert_{U^2_\Delta}. 
\end{equation}
\begin{equation}
  \| u_\lambda  v_\mu\|_{L^2} \lesssim \mu^{-\frac12} 
  \|u_\lambda \|_{U^2_\Delta} \|v_\mu\|_{U^2_\Delta}, \qquad \lambda \ll \mu
\label{biuu}\end{equation}

\end{cor}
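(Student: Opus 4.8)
The plan is to derive both bounds from Proposition~\ref{bilinear} by a transference argument that upgrades the bilinear estimate from homogeneous solutions to arbitrary $U^2_\Delta$ functions. The first step is to observe that in both statements the left-hand side is a frequency-localized output. In the first estimate $P_\lambda(uv)$ is supported in spatial frequencies $|\xi| \sim \lambda$, so $P_\lambda(uv) = P_\lambda P_{>c\lambda}(uv)$ for a suitable small $c$; since $P_\lambda$ is bounded on $L^2$ it suffices to bound $\|P_{>c\lambda}(uv)\|_{L^2}$. In the second estimate the hypothesis $\lambda \ll \mu$ forces the product $u_\lambda v_\mu$ to have spatial frequency $|\xi| \sim \mu$, so that $u_\lambda v_\mu = P_{>c\mu}(u_\lambda v_\mu)$. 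Thus in both cases it is enough to establish the $U^2_\Delta$ version of Proposition~\ref{bilinear},
\[
\|P_{>\nu}(uv)\|_{L^2} \lesssim \nu^{-\frac12}\,\|u\|_{U^2_\Delta}\,\|v\|_{U^2_\Delta},
\]
and then to take $\nu = c\lambda$, respectively $\nu = c\mu$.

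To prove this transferred estimate, note that by bilinearity, the triangle inequality, and the definition of $\|\cdot\|_{U^2_\Delta}$ as an infimum over atomic decompositions, it suffices to treat the case in which $u$ and $v$ are $U^2_\Delta$ atoms, with $\|u\|_{U^2_\Delta}\|v\|_{U^2_\Delta}$ replaced by $1$. So write $u = \sum_n 1_{[t_{n-1},t_n)} e^{it\Delta}\phi_n$ and $v = \sum_m 1_{[s_{m-1},s_m)} e^{it\Delta}\psi_m$ with $\sum_n\|\phi_n\|_{L^2}^2 = \sum_m\|\psi_m\|_{L^2}^2 = 1$. Passing to the common refinement $\{J_k\}$ of the two partitions, on each $J_k$ we have $u = e^{it\Delta}\phi_{n(k)}$ and $v = e^{it\Delta}\psi_{m(k)}$. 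Since $P_{>\nu}$ acts only in $x$ it commutes with the temporal cutoffs $1_{J_k}$, and the $J_k$ are disjoint, so orthogonality in $L^2_t$ gives
\[
\|P_{>\nu}(uv)\|_{L^2}^2 = \sum_k \big\|P_{>\nu}\big(e^{it\Delta}\phi_{n(k)}\, e^{it\Delta}\psi_{m(k)}\big)\big\|_{L^2(J_k\times\R)}^2 .
\]

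On each $J_k$ the two factors are genuine solutions of the homogeneous equation, so Proposition~\ref{bilinear} bounds the $k$-th term by $\nu^{-1}\|\phi_{n(k)}\|_{L^2}^2\|\psi_{m(k)}\|_{L^2}^2$. The final point is the summation: the refinement map $k \mapsto (n(k),m(k))$ is injective, since the intersection of one interval from each partition is a single interval and hence corresponds to at most one $J_k$. Consequently
\[
\sum_k \|\phi_{n(k)}\|_{L^2}^2\|\psi_{m(k)}\|_{L^2}^2 \le \Big(\sum_n\|\phi_n\|_{L^2}^2\Big)\Big(\sum_m\|\psi_m\|_{L^2}^2\Big) = 1,
\]
which yields $\|P_{>\nu}(uv)\|_{L^2} \lesssim \nu^{-1/2}$ for atoms. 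Summing over the atomic decompositions of $u$ and $v$ and taking the infimum gives the general transferred bound, and the two stated estimates follow by setting $\nu = c\lambda$ and $\nu = c\mu$ as above.

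The only genuinely delicate point is the interaction between the frequency projection and the temporal atomic structure; here it is harmless \emph{precisely because} $P_{>\nu}$ is a spatial Fourier multiplier and therefore commutes with the time localizations, so that no modulation losses arise and the clean $L^2_t$ orthogonality can be used. As an alternative to the hands-on argument above, one may invoke the abstract bilinear transference principle for $U^2$ spaces established in \cite{MR2526409}.
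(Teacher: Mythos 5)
Your proof is correct and takes essentially the same route as the paper's: the paper likewise reduces to $U^2_\Delta$ atoms, hence to piecewise solutions of the homogeneous Schr\"odinger equation, and then invokes Proposition~\ref{bilinear}. The only difference is that you spell out the details the paper leaves implicit (common refinement of the two partitions, commutation of the spatial projection with the time cutoffs, $L^2_t$ orthogonality, and the injectivity of the refinement map for the summation), all of which you handle correctly.
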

Again it suffices to prove these estimates for atoms, and then 
for solutions to the homogeneous Schr\"oder equation. But this follows from
  the bilinear estimate of Proposition \ref{bilinear}.

\section{The cubic nonlinearity}
\label{snonlin}
In this section we prove Proposition~\ref{pnonlin}.  For a dyadic
frequency $\lambda$ we estimate the nonlinearity $|u|^2 u$ at
frequency $\lambda$ in a time interval $I$ of length $\lambda^{4s}$ in
$DU^2_\Delta$. By duality this leads to a study of a quadrilinear form
of the type
 \[
J=  \int \chi_I u_{\lambda_1} \bar u_{\lambda_2} 
u_{\lambda_3} \bar u_{\lambda_4} dx dt 
\]
The position of the complex conjugates is of little importance in the
sequel. We will assume that $\lambda_1 \le \lambda_2 \le \lambda_3 \le
\lambda_4$; some of the constants in the next lemma improve
if the complex conjugates are placed differently, but this plays 
no role in our subsequent analysis.

\begin{lemma} \label{est} Let $I$ be any compact interval. 
Then the following estimates hold: 

{\bf A).} If $\lambda_1 \sim \lambda_2 \sim \lambda_3 \sim \lambda_4 $ then
\begin{equation} \label{ineq:A2} 
 |J| \lesssim  \|u_{\lambda_1}\|_{L^2}\|u_{\lambda_2}\|_{V^2_\Delta}
\|u_{\lambda_3}\|_{V^2_\Delta}\|u_{\lambda_4}\|_{V^2_\Delta}
\end{equation}
\begin{equation} \label{ineq:A} 
 |J| \lesssim  |I|^{\frac12}  \|u_{\lambda_1}\|_{V^2_\Delta}
\|u_{\lambda_2}\|_{V^2_\Delta}
\|u_{\lambda_3}\|_{V^2_\Delta}\|u_{\lambda_4}\|_{V^2_\Delta}
\end{equation}

{\bf B).}  If $\lambda_1 \sim \lambda_2 \ll  \lambda_3 \sim \lambda_4$ then
\begin{equation} \label{ineq:B2}
 |J| \lesssim \lambda_1^{\frac12}
\lambda_4^{-\frac12}\|u_{\lambda_1}\|_{L^2}\|u_{\lambda_2}\|_{U^2_\Delta}
\|u_{\lambda_3}\|_{U^2_\Delta}\|u_{\lambda_4}\|_{U^2_\Delta}
\end{equation} 
\begin{equation} \label{ineq:B*2}
 |J| \lesssim  \lambda_1^{\frac12}
\lambda_4^{-\frac12} \|u_{\lambda_1}\|_{U^2_\Delta}\|u_{\lambda_2}\|_{U^2_\Delta}
\|u_{\lambda_3}\|_{L^2}\|u_{\lambda_4}\|_{U^2_\Delta} 
\end{equation} 
\begin{equation} \label{ineq:B}
 |J| \lesssim \lambda_4^{-1}\|u_{\lambda_1}\|_{U^2_\Delta}\|u_{\lambda_2}\|_{U^2_\Delta}
\|u_{\lambda_3}\|_{U^2_\Delta}\|u_{\lambda_4}\|_{U^2_\Delta}
\end{equation} 

{\bf C).}  If $ \lambda_1 \ll \lambda_2 \ll \lambda_3 \sim \lambda_4$ 
\begin{equation} \label{ineq:C2}
 |J| \lesssim \lambda_1^{\frac12} \lambda_4^{-\frac12}
\|u_{\lambda_1}\|_{L^2}\|u_{\lambda_2}\|_{U^2_\Delta}
\|u_{\lambda_3}\|_{U^2_\Delta}\|u_{\lambda_4}\|_{U^2_\Delta}
\end{equation} 
\begin{equation} \label{ineq:C*2}
 |J| \lesssim 
\min\{ \lambda_1^{\frac12} \lambda_2^{-1}
\lambda_4^{1/2},1 \} \lambda_2^{\frac12} \lambda_4^{-\frac12} 
\|u_{\lambda_1}\|_{U^2_\Delta}\|u_{\lambda_2}\|_{L^2}
\|u_{\lambda_3}\|_{U^2_\Delta}\|u_{\lambda_4}\|_{U^2_\Delta}
\end{equation} 
\begin{equation} \label{ineq:C**2}
 |J| \lesssim \lambda_1^{\frac12} \lambda_4^{-\frac12}
\|u_{\lambda_1}\|_{U^2_\Delta}\|u_{\lambda_2}\|_{U^2_\Delta}
\|u_{\lambda_3}\|_{L^2}\|u_{\lambda_4}\|_{U^2_\Delta}
\end{equation} 
\begin{equation} \label{ineq:C}
 |J| \lesssim 
\min\{ \lambda_1^{\frac12} \lambda_2^{-1}
\lambda_4^{1/2},1 \} \lambda_4^{-1}\|u_{\lambda_1}\|_{U^2_\Delta}\|u_{\lambda_2}\|_{U^2_\Delta}
\|u_{\lambda_3}\|_{U^2_\Delta}\|u_{\lambda_4}\|_{U^2_\Delta}
\end{equation} 

{\bf D).} If $ \lambda_1 \ll \lambda_2 \sim \lambda_3 \sim \lambda_4$ then
\begin{equation} \label{ineq:D2}
 |J| \lesssim 
 \lambda_1^{\frac12}   \lambda_4^{-\frac12} 
 \|u_{\lambda_1}\|_{L^2}\|u_{\lambda_2}\|_{U^2_\Delta}\|u_{\lambda_3}\|_{U^2_\Delta}\|u_{\lambda_4}\|_{U^2_\Delta}
\end{equation} 
\begin{equation} \label{ineq:D*2}
 |J| \lesssim 
 \lambda_1^{\frac12}   \lambda_4^{-\frac12} 
 \|u_{\lambda_1}\|_{U^2_\Delta}\|u_{\lambda_2}\|_{L^2}
\|u_{\lambda_3}\|_{U^2_\Delta}\|u_{\lambda_4}\|_{U^2_\Delta}
\end{equation} 
\begin{equation} \label{ineq:D**2}
 |J| \lesssim 
 \lambda_1^{\frac14}   \lambda_4^{-\frac14} 
 \|u_{\lambda_1}\|_{U^2_\Delta}\|u_{\lambda_2}\|_{U^2_\Delta}
\|u_{\lambda_3}\|_{L^2}\|u_{\lambda_4}\|_{U^2_\Delta}
\end{equation} 
\begin{equation} \label{ineq:D}
 |J| \lesssim 
 \lambda_1^{\frac14}   \lambda_4^{-\frac14} 
 \|u_{\lambda_1}\|_{U^2_\Delta}\|u_{\lambda_2}\|_{U^2_\Delta}
\|u_{\lambda_3}\|_{U^2_\Delta}\|u_{\lambda_4}\|_{U^2_\Delta}
\end{equation}

 \end{lemma}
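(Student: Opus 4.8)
\emph{Reduction to free solutions.} The form $J$ is $4$-linear, and $U^2_\Delta$ is an atomic space, so by expanding every $U^2_\Delta$ factor into atoms and using multilinearity it suffices to prove each estimate when the corresponding factors are single atoms, i.e.\ piecewise solutions of the homogeneous equation. Passing to the common refinement of the four partitions, on each subinterval all four factors are simultaneously free solutions $e^{it\Delta}\phi_i$; since restricting the $t$-integration only decreases every norm appearing on the right, the estimates reduce to the corresponding bounds for genuine free solutions, with each $\|u_{\lambda_i}\|_{U^2_\Delta}$ replaced by $\|\phi_i\|_{L^2}$, while a factor carried in the plain $L^2=L^2_{t,x}(I\times\R)$ norm is left untouched. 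For the factors measured in $V^2_\Delta$ (case A), which is not atomic, I would instead use the embedding $V^2_\Delta\subset U^6_\Delta$ from \eqref{besov} together with the $L^6_{t,x}$ Strichartz estimate.

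\emph{The two inputs and the generic scheme.} Everything is assembled from two families of bounds for free solutions: the linear Strichartz estimates of Corollary~\ref{upstr} (in practice the endpoints $L^\infty_tL^2_x$, $L^6_{t,x}$, $L^4_tL^\infty_x$), and the bilinear estimate \eqref{biuu} (and its complex–conjugate analog, which holds identically since the position of the conjugates is immaterial), gaining a factor $\lambda_b^{-1/2}$ for a product of two free solutions with separated frequencies $\lambda_a\ll\lambda_b$. The organizing principle is to split $J$ by Cauchy--Schwarz or Hölder into pairs, pairing each low frequency with a high one so as to harvest one power $\lambda_4^{-1/2}$ per available frequency gap from \eqref{biuu}, and to absorb clusters of comparable frequencies in the linear Strichartz norms, which are global in time and therefore cost no power of $|I|$. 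A factor held in $L^2_{t,x}$ that sits at a low frequency $\lambda$ contributes an extra $\lambda^{1/2}$ through the Bernstein bound $\|u\|_{L^2_tL^\infty_x}\lesssim\lambda^{1/2}\|u\|_{L^2_{t,x}}$.

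\emph{Case analysis by the number of frequency gaps.} In case A all frequencies are comparable, so there is no gap: three factors go into $L^6_{t,x}$ (via $V^2_\Delta\subset U^6_\Delta$) and the last into $L^2_{t,x}$, giving \eqref{ineq:A2}, and bounding that $L^2_{t,x}$ norm by $|I|^{1/2}\|\cdot\|_{L^\infty_tL^2_x}$ yields \eqref{ineq:A}. In case B there are two disjoint gaps: pairing $(u_{\lambda_1},u_{\lambda_3})$ and $(u_{\lambda_2},u_{\lambda_4})$ and applying \eqref{biuu} to each produces two powers $\lambda_4^{-1/2}$, hence the $\lambda_4^{-1}$ of \eqref{ineq:B}; the variants \eqref{ineq:B2}, \eqref{ineq:B*2} with one $L^2_{t,x}$ factor follow by keeping that factor in $L^2_{t,x}$, using Bernstein on it for the $\lambda_1^{1/2}$, and spending one bilinear estimate together with $L^\infty_tL^2_x$ on the remaining pair. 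Case C adds a third scale ($\lambda_1\ll\lambda_2\ll\lambda_3\sim\lambda_4$): the two competing pairings (grouping $\lambda_1$ with $\lambda_3$ or with $\lambda_4$) give two admissible bounds, and reading off whichever is sharper produces the weights $\min\{\lambda_1^{1/2}\lambda_2^{-1}\lambda_4^{1/2},1\}$ appearing in \eqref{ineq:C*2} and \eqref{ineq:C}.

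\emph{The main obstacle.} The delicate configuration is case D, $\lambda_1\ll\lambda_2\sim\lambda_3\sim\lambda_4$, where a single low mode faces a cluster of three comparable high frequencies and only one clean gap is available; the analogous nearly-resonant part of case C is responsible for the $\min\{\cdot,1\}$ factors. Here the crucial point is that $J$ forces a frequency cancellation: pairing the low factor against the product of the three high ones restricts the latter to a near-resonance $\xi_2-\xi_3+\xi_4=O(\lambda_1)$ among frequencies of size $\lambda_4$. I would exploit this constraint through the conjugate bilinear estimate applied to a suitably chosen high pair inside the resonant region, controlling the rest by $L^4_tL^\infty_x$; because only part of the separation survives once two frequencies are nearly equal, the harvested power is halved, which is exactly the origin of the reduced weight $\lambda_1^{1/4}\lambda_4^{-1/4}$ in \eqref{ineq:D**2} and \eqref{ineq:D}, whereas keeping an appropriate factor in $L^2_{t,x}$ restores the weight $\lambda_1^{1/2}\lambda_4^{-1/2}$ of \eqref{ineq:D2} and \eqref{ineq:D*2}. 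I expect this resonant high--high interaction to be the hardest part: it is the only regime where \eqref{biuu} degenerates, and extracting the sharp powers requires combining it carefully with the linear Strichartz bounds and, where a further high-modulation improvement is needed, with $\|Q_\mu u\|_{L^2}\lesssim\mu^{-1/2}\|u\|_{V^2_\Delta}$, rather than any single mechanical pairing.
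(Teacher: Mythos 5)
Your opening reduction is a step that would fail. Expanding all four $U^2_\Delta$ factors into atoms and passing to the common refinement forces you to sum the per-cell free-solution bounds in \emph{first} powers, while $U^2$ atoms only give $\ell^2$ control of their steps, and that sum does not close. Concretely, take $u_{\lambda_1},u_{\lambda_2},u_{\lambda_3}$ to be one-step atoms and $u_{\lambda_4}$ an atom with $N$ steps each of data norm $N^{-1/2}$: your scheme bounds $|J|$ by $C\sum_{n=1}^{N} 1\cdot 1\cdot 1\cdot N^{-1/2}=CN^{1/2}$, which diverges although $\|u_{\lambda_4}\|_{U^2_\Delta}\approx 1$. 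This is precisely why transfer principles of this kind require the output norm to be $L^p_t$-based with $p$ matching the exponent of the atoms (then the per-cell bounds are raised to the $p$-th power and sum), and why the paper never expands the quadrilinear form itself: it splits $J$ by H\"older into Strichartz norms of single factors and $L^2_{t,x}$ norms of bilinear products, each of which does admit a legitimate atomic reduction (for a bilinear $L^2$ bound the cell contributions are squared). Since Corollary~\ref{upstr} and \eqref{biuu} are stated directly at the $U^2$ level, your plan can be repaired by deleting the reduction and running H\"older at the $U^2$/$V^2$ level — which is what the paper does — but as written the foundation is wrong.

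More importantly, the mechanisms you propose for the hard cases are not the ones that work. In Case C, the ``two competing pairings'' ($\lambda_1$ with $\lambda_3$ versus $\lambda_1$ with $\lambda_4$) give the \emph{same} bound, since $\lambda_3\sim\lambda_4$; they cannot produce $\min\{\lambda_1^{1/2}\lambda_2^{-1}\lambda_4^{1/2},1\}$. In the paper the nontrivial entry of the min in \eqref{ineq:C*2} comes from a different idea: insert the output projection $\tilde P_{\lambda_2}$ on the high-high product and apply Proposition~\ref{bilinear} in the form $\|\tilde P_{\lambda_2}(u_{\lambda_3}\bar u_{\lambda_4})\|_{L^2}\lesssim \lambda_2^{-1/2}\|u_{\lambda_3}\|_{U^2_\Delta}\|u_{\lambda_4}\|_{U^2_\Delta}$ — the gain is at the \emph{output} frequency $\lambda_2$, not at $\lambda_4$ — combined with Bernstein on $u_{\lambda_1}$; and \eqref{ineq:C} then requires the modulation decomposition at threshold $\lambda_2\lambda_4$ together with the nonresonance observation that $J$ vanishes when all four factors have modulation $\ll\lambda_2\lambda_4$, so one factor may be placed in $L^2$ with the gain $(\lambda_2\lambda_4)^{-1/2}$ and the $L^2$-factor bounds \eqref{ineq:C2}--\eqref{ineq:C**2} applied. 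Neither ingredient appears in your outline (you invoke high-modulation bounds only in passing, for Case D, without identifying the threshold or the vanishing claim). Likewise in Case D, ``only part of the separation survives so the harvested power is halved'' is a heuristic, not an argument: the proof of \eqref{ineq:D**2} splits $u_{\lambda_2}u_{\lambda_4}=P_{>\lambda}(u_{\lambda_2}u_{\lambda_4})+P_{<\lambda}(u_{\lambda_2}u_{\lambda_4})$ at the geometric mean $\lambda=(\lambda_1\lambda_4)^{1/2}$, applies Proposition~\ref{bilinear} to the first piece, and handles the second by almost-orthogonality over frequency intervals of length $\lambda$, the bilinear bound on $u_{\lambda_1}u_{\lambda_4}$, and Bernstein for $u_{\lambda_2}$ at scale $\lambda$; the bound \eqref{ineq:D} then again needs the modulation/nonresonance step. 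These splittings at intermediate scales and the resonance analysis are the core of the proof, and they are missing from your proposal.
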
 
 It is worth noting that the length of the interval enters only in
 \eqref{ineq:A}. In all other cases the cutoff $\chi_I$ can be 
safely discarded. The bounds in parts B and C improve if the 
none or both of the high frequency factors have complex conjugates.
We also note the weaker bound \eqref{ineq:D**2} when the 
complex conjugates fall on the first and third factor; this
directly leads to the weaker bound in \eqref{ineq:D}, and causes 
some small difficulties later on.

We also remark that combining the results in 
\eqref{ineq:A2}, \eqref{ineq:B2}, \eqref{ineq:B*2},\eqref{ineq:C2}, \eqref{ineq:C*2}
\eqref{ineq:D2}, and \eqref{ineq:D*2} we obtain by duality
\begin{cor}
Suppose that $\lambda_1 \leq \lambda_2 \leq \lambda_3$ and $\lambda_0 \lesssim
\lambda_2$. Then
\begin{equation} \label{tril2}
\| P_{\lambda_0} (u_{\lambda_1} \bar u_{\lambda_2}u_{\lambda_3})\|_{L^2}
\lesssim \lambda_0^\frac12 \lambda_3^{-\frac12} 
 \|u_{\lambda_1}\|_{U^2_\Delta}\|u_{\lambda_2}\|_{U^2_\Delta}
\|u_{\lambda_3}\|_{U^2_\Delta}
\end{equation}
\end{cor}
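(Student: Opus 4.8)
The corollary asserts that for $\lambda_1 \le \lambda_2 \le \lambda_3$ and $\lambda_0 \lesssim \lambda_2$,
$$\| P_{\lambda_0} (u_{\lambda_1} \bar u_{\lambda_2}u_{\lambda_3})\|_{L^2} \lesssim \lambda_0^{1/2} \lambda_3^{-1/2} \|u_{\lambda_1}\|_{U^2_\Delta}\|u_{\lambda_2}\|_{U^2_\Delta}\|u_{\lambda_3}\|_{U^2_\Delta}.$$

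The claim is that this "follows by duality" from the listed estimates in Lemma `est`. Let me think about how the duality works.

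**The duality setup.** The $L^2$ norm of $P_{\lambda_0}(u_{\lambda_1}\bar u_{\lambda_2}u_{\lambda_3})$ can be computed by pairing with an $L^2$ function. By the self-duality of $L^2$,
$$\|P_{\lambda_0}(u_{\lambda_1}\bar u_{\lambda_2}u_{\lambda_3})\|_{L^2} = \sup_{\|g\|_{L^2}=1} \left|\int g \cdot P_{\lambda_0}(u_{\lambda_1}\bar u_{\lambda_2}u_{\lambda_3})\,dx\,dt\right|.$$

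Now $P_{\lambda_0}$ is self-adjoint (it's a real even Fourier multiplier), so moving it onto $g$ gives
$$\int (P_{\lambda_0} g)\, u_{\lambda_1}\bar u_{\lambda_2}u_{\lambda_3}\,dx\,dt.$$

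Write $u_{\lambda_0} := P_{\lambda_0} g$, which satisfies $\|u_{\lambda_0}\|_{L^2} \le \|g\|_{L^2}$ and is frequency-localized at $\lambda_0$. We can further assume (by possibly conjugating) the conjugate placement is as needed. Then we have expressed the $L^2$ norm as
$$\sup_{\|u_{\lambda_0}\|_{L^2}\le 1} |J|, \qquad J = \int u_{\lambda_0}\, u_{\lambda_1}\,\bar u_{\lambda_2}\,u_{\lambda_3}\,dx\,dt$$
(up to conjugates), which is exactly a quadrilinear form of the type treated in Lemma `est`, with four frequencies $\lambda_0, \lambda_1, \lambda_2, \lambda_3$.

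**Matching cases.** The key point is that in the quadrilinear form $J$, the frequency $\lambda_0$ enters as one of the four slots, and the relevant estimate from Lemma `est` is precisely the one in which the $L^2$ norm (rather than $U^2_\Delta$) is placed on the factor indexed $\lambda_0$. To get the claimed bound I need the relevant Lemma `est` estimate to produce the factor $\lambda_0^{1/2}\lambda_3^{-1/2}$ with $\|u_{\lambda_0}\|_{L^2}$ and $U^2_\Delta$ norms on the other three. The plan is to check, case by case according to the relative sizes of $\{\lambda_0, \lambda_1, \lambda_2, \lambda_3\}$ (noting $\lambda_0 \lesssim \lambda_2$ and $\lambda_1 \le \lambda_2 \le \lambda_3$), that the cited estimates \eqref{ineq:A2}, \eqref{ineq:B2}, \eqref{ineq:B*2}, \eqref{ineq:C2}, \eqref{ineq:C*2}, \eqref{ineq:D2}, \eqref{ineq:D*2} supply exactly the bound $\lambda_0^{1/2}\lambda_3^{-1/2}$ (or better, subsumed by it) with the $L^2$ norm falling on the $\lambda_0$-slot. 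Because $\lambda_3 \sim \lambda_4$ is the top frequency in all the relevant cases and $\lambda_0$ plays the role of the smallest or second-smallest frequency, the gain $\lambda_{\min}^{1/2}\lambda_{\max}^{-1/2}$ in each cited inequality matches $\lambda_0^{1/2}\lambda_3^{-1/2}$ after relabeling.

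**The main obstacle.** The delicate part is the bookkeeping of which factor carries the $L^2$ norm and ensuring the gain is always at least $\lambda_0^{1/2}\lambda_3^{-1/2}$. In the fully resonant case $\lambda_0 \sim \lambda_1 \sim \lambda_2 \sim \lambda_3$ one uses \eqref{ineq:A2}, which has \emph{no} explicit frequency gain; here $\lambda_0 \sim \lambda_3$ so $\lambda_0^{1/2}\lambda_3^{-1/2}\sim 1$ and the bound is consistent. The genuinely constraining cases are those where $\lambda_0$ is strictly smallest: one must confirm that the exponent on the smallest frequency in the matched inequality is exactly $+\tfrac12$ and on the largest exactly $-\tfrac12$, which holds for all the cited estimates (\eqref{ineq:B2}, \eqref{ineq:C2}, \eqref{ineq:D2}, etc.) but notably \emph{not} for \eqref{ineq:D**2}/\eqref{ineq:D} — this is why only the listed subset is invoked, and why those weaker $\lambda_1^{1/4}\lambda_4^{-1/4}$ bounds are deliberately excluded from the corollary. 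Once the correct case-to-inequality correspondence is fixed, the supremum over $\|u_{\lambda_0}\|_{L^2}\le 1$ turns each matched bound into the claimed trilinear $L^2$ estimate, using $U^2_\Delta \subset V^2_\Delta$ wherever a cited inequality is stated with $V^2_\Delta$ norms.
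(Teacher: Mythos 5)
Your proposal is correct and is essentially the paper's own argument: the paper proves the corollary exactly by this duality, pairing $P_{\lambda_0}(u_{\lambda_1}\bar u_{\lambda_2}u_{\lambda_3})$ against an $L^2$-normalized function, absorbing $P_{\lambda_0}$ onto the dual factor, and invoking precisely the listed quadrilinear estimates (those in which the $L^2$ norm sits on the slot occupied by the dual frequency $\lambda_0$), with the same observation that the problematic weaker bounds \eqref{ineq:D**2}, \eqref{ineq:D} are not needed. Your case bookkeeping, including the observation that in the fully comparable case the trivial gain $\lambda_0^{1/2}\lambda_3^{-1/2}\sim 1$ suffices, fills in details the paper leaves implicit.
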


\begin{proof} 
{\bf A.} For \eqref{ineq:A2} we use the $L^6$ Strichartz estimate.
For \eqref{ineq:A2} we use the $L^8L^4$
Strichartz estimate and H\"older's inequality to obtain
\[  
J \lesssim  |I|^{\frac12} \prod_{j=1}^4 \Vert \chi_I u_{\lambda_j}  
\Vert_{L^8 L^4} 
\lesssim  |I|^{\frac12} \prod_{j=1}^4 \Vert \chi_I u_{\lambda_j}  \Vert_{U^8} 
\lesssim  |I|^{\frac12} \prod_{j=1}^4 \Vert \chi_I u_{\lambda_j}  \Vert_{V^2} 
\]

{\bf B.} For both \eqref{ineq:B2} and \eqref{ineq:B*2} we use an $L^2$ bilinear
estimate for $u_{\lambda_2} u_{\lambda_4}$, and the remaining two
factors are estimated in $L^2$ respectively $L^\infty L^2$ with an
added Bernstein inequality for $u_{\lambda_1}$.
For \eqref{ineq:B} we use two $L^2$ bilinear
estimates for $u_{\lambda_1} u_{\lambda_3}$, respectively 
$u_{\lambda_2} u_{\lambda_4}$.

{\bf C.} The bounds \eqref{ineq:C2} and \eqref{ineq:C**2} are similar
to \eqref{ineq:B2} and \eqref{ineq:B*2}. The same argument also yields
the $ \lambda_2^{\frac12} \lambda_4^{-\frac12} $ factor in
\eqref{ineq:C*2}.  To complete the proof of \eqref{ineq:C*2} we
observe that we can harmlessly insert a projector $\tilde
P_{\lambda_2} (u_{\lambda_3} \bar u_{\lambda_4})$ in the last
product. Here and later, $\tilde P_{\lambda}$ denotes a wider
frequency $\lambda$ projector, for instance $\tilde P_{\lambda}=
\sum_{\mu \sim \lambda} P_\mu$.  By the $L^2$ bilinear estimate we
have
\[
\| \tilde P_{\lambda_2} (u_{\lambda_3} \bar u_{\lambda_4})\|_{L^2}
\lesssim \lambda_2^{-\frac12} \|u_{\lambda_3}\|_{U^2_\Delta}
\|u_{\lambda_4}\|_{U^2_\Delta} 
\]
and using the $L^\infty$ bound for $u_{\lambda_1}$ we obtain the second desired 
factor $\lambda_1^{\frac12} \lambda_2^{-\frac12}$ in  \eqref{ineq:C*2}.

To prove \eqref{ineq:C} we decompose each factor into a term with
modulation $\gtrsim \lambda_2 \lambda_4$, and one with smaller
modulation,
\[
 u_{\lambda_j}= u_{\lambda_j}^h + u_{\lambda_j}^l
 \]
 If all four modulations are low, then a simple frequency-modulation
 analysis shows that $J=0$. Hence we assume without any restriction in
 generality that one of the factors is at high modulation. For that factor 
we have an $L^2$ bound, see \eqref{embed} and \eqref{XSB}, 
\[
\| u_{\lambda_j}^h\|_{L^2} \lesssim (\lambda_2 \lambda_4)^{-\frac12}
\|  u_{\lambda_j}\|_{U^2_\Delta} 
\]
Hence the constant in 
\eqref{ineq:C} is obtained by adding a  $(\lambda_2 \lambda_4)^{-\frac12}$
factor to each of the constants in \eqref{ineq:C2}-\eqref{ineq:C**2}
and summing them up.

{\bf D.} Case D is similar to case C except for the inequality \eqref{ineq:D**2},
where we only obtain the worse factor $\lambda_1^{\frac14}
\lambda_4^{-\frac14}$. The difference there is that $\xi_2$ and
$\xi_4$ no longer need to have dyadic separation so we cannot use
directly the bilinear $L^2$ bound.  To address this issue we split the
problem into two cases by writing 
\[
u_{\lambda_2} u_{\lambda_4} = P_{> \lambda} (u_{\lambda_2} u_{\lambda_4})
+P_{< \lambda} (u_{\lambda_2} u_{\lambda_4}),
\qquad \lambda =  (\lambda_1 \lambda_4)^{\frac12}
\]
For the first term we use the bilinear $L^2$ bound to obtain
\[
\| P_{> \lambda} (u_{\lambda_2} u_{\lambda_4})\|_{L^2}
\lesssim (\lambda_1 \lambda_4)^{-\frac14} \|u_{\lambda_2}\|_{U^2_\Delta}
\| u_{\lambda_4}\|_{U^2_\Delta}
\]
and conclude with the pointwise bound for $u_{\lambda_1}$.

For the second term we have orthogonality with respect to 
frequency intervals of size $\lambda$, therefore the problem reduces 
to the case when $u_{\lambda_2}$ and $ u_{\lambda_4}$ are frequency localized 
in $\lambda$ sized intervals.
Then we use
the $L^2$ bilinear bound for $u_{\lambda_1} u_{\lambda_4}$ gaining a 
$ \lambda_4^{-\frac12}$ factor, and then use Bernstein for $u_{\lambda_2}$
(now localized on the $(\lambda_1 \lambda_4)^{\frac12}$ scale)
for a loss of  $(\lambda_1 \lambda_4)^{\frac14}$.
\end{proof}

We continue with the proof of Proposition~\ref{pnonlin}.

\begin{proof}[Proof of Proposition~\ref{pnonlin}.]

For the $Y^s_\Lambda$ bound we need to estimate the trilinear expression 
\[
P_\lambda (u_{\lambda_1} \bar u_{\lambda_2} u_{\lambda_3})
\]
in $Y_\lambda[I]$ over intervals of size $|I| = \lambda^{4s}$, and
then square sum with respect to all frequencies $\lambda$,
$\lambda_1$, $\lambda_2$, $\lambda_3$. In the case of the
$Y^s_{\Lambda,le}$ bound we have to estimate 
the better, localized, trilinear expression 
\[
\chi_j^\lambda P_\lambda (u_{\lambda_1} \bar u_{\lambda_2} u_{\lambda_3})
\]
in $Y_\lambda[I]$, but we need to perform an additional summation with
respect to the time interval $I$.

We separately consider several cases depending on the 
relative size of all $\lambda$'s. In order for the output to be nonzero
we must be in one of the following two cases:

\smallskip
1)  $\max \{\lambda_{1}, \lambda_2,\lambda_3\} \sim \lambda$.
\smallskip

2) $\{\lambda_{1}, \lambda_2,\lambda_3\}  = \{ \alpha,\mu,\mu\}$
with $\lambda \ll \mu$, $\alpha \leq \mu$.

\smallskip 
Here we allow for a slight abuse of notation, as the two highest
$\lambda_j$'s need not be equal but merely comparable.  We will
consider these two cases separately. In the second case we will
subdivide into further cases depending on the relative size of
$\alpha$ and $\lambda$.

The space $Y_\lambda[I]$ is a weighted sum of an $L^2$ space and an
$DU^2_\Delta$ space. In Case 1 we will estimate the cubic term only in $L^2$.
In Case 2 we will estimate the cubic term in both $L^2$ and
$DU^2_\Delta$. The estimates in either spaces are good enough to complete
the argument, and hence there is some redundancy. Nevertheless we find
it instructive to do the extra work, as it shows that that this
argument does not break at $s = -1/4$.

We remark, though, that in order to continue it below $s = -1/4$ some
extra care is required as the balance of the spatial scales
changes. We also remark that this difficulty disappears exactly at $s
= -1/4$, when all spatial scales coincide.

{\bf Case 1:} $\max \{\lambda_{1}, \lambda_2,\lambda_3\} \sim
\lambda$.  This case imposes no restrictions on $s$ beyond $ s\ge
-1/2$.  Instead it makes the arguments for the length of the time
intervals in our $X^s_\Lambda$, $X^s_{\Lambda,le}$, $Y^s_\Lambda$ and
$Y^s_{\Lambda,le}$ norms precise. In this case we restrict ourselves
to the $L^2$ bound. Using duality and \eqref{ineq:A2} we obtain
\[
\begin{split} 
\|  \chi_I  u_{\lambda_1} u_{\lambda_2} u_{\lambda_3} \|_{L^2}
\lesssim & \ \| u_{\lambda_1}\|_{V^2[I]}  \|u_{\lambda_2}\|_{V^2[I]}
\| u_{\lambda_3} \|_{V^2[I]} 
\\ \lesssim & \ (\lambda_{1} \lambda_{2} \lambda_3)^{-s}
 \| u_{\lambda_1}\|_{X^s_\Lambda}  \|u_{\lambda_2}\|_{X^s_\Lambda}
\| u_{\lambda_3} \|_{X^s_\Lambda} 
\end{split}
\]
For intervals $I$ of size 
$|I| = \lambda^{4s}$ this gives
\[
\|P_\lambda  (u_{\lambda_1} u_{\lambda_2} u_{\lambda_3}) \|_{Y^s_\Lambda[I]}
\lesssim  (\lambda_{1} \lambda_{2} \lambda_3)^{-s} \lambda^{3s}
 \| u_{\lambda_1}\|_{X^s_\Lambda}  \|u_{\lambda_2}\|_{X^s_\Lambda}
\| u_{\lambda_3} \|_{X^s_\Lambda} 
\]
where the $\lambda_{1,2,3}$ summations are straightforward. 

For the estimate in  $Y^s_{\Lambda,le}$ we observe that 
\[
\| \chi_j^\lambda\chi_I  P_\lambda  ( u_{\lambda_1} u_{\lambda_2} u_{\lambda_3}) \|_{L^2}
\lesssim  \sum_l \langle j-l \rangle^{-N} 
  \| \chi_l^\lambda  \chi_I  u_{\lambda_1} u_{\lambda_2} u_{\lambda_3} \|_{L^2}.
\]
If $\lambda \gg \Lambda$ then the same argument as above applies since
the square summability with respect to time intervals is inherited
from $u_{\lambda_{max}}$. If $\lambda \sim \Lambda$ then we apply the
argument to $\partial _x( u_{\lambda_1} u_{\lambda_2} u_{\lambda_3})$;
then the square summability with respect to $I$ is inherited from the
differentiated factor.

{\bf Case 2:} $\{\lambda_{1}, \lambda_2,\lambda_3\}  = \{ \alpha,\mu,\mu\}$
with $\lambda \ll \mu$. We subdivide this as follows:

{\bf Case 2(a):} $ \lambda \sim \alpha \ll \mu$.  This case and the
next is where we gain most from the local energy decay bounds.  This
case requires no explicit restriction on $s$ beyond $s \ge -1/2$; instead it
determines the power of $\lambda$ in the $DU^2_\Delta$ component of $Y^s$ and
$Y^s_{le}$. We remark however that for the argument below it is important 
to know that the frequency $\mu$ spatial scale is larger than the frequency 
$\lambda$ spatial scale; this breaks down for $s < -1/4$ therefore
the above mentioned power of $\lambda$ would have to be adjusted
for such $s$.

The placement of the complex conjugates is irrelevant here, therefore
we let, say, $\lambda_1= \alpha$. We decompose each of the factors as
\[
u_{\lambda_i} = \sum_{j \in \Z} u_{\lambda_i,j},  \qquad
u_{\lambda_i,j} = \tilde P_{\lambda_i} (\chi_j^{\lambda_i} u_{\lambda_i})
\]
preserving the frequency localizations. The spatial localizations
are not preserved but the tails are negligible,
\begin{equation}\label{tails}
  |\chi_k^{\lambda_i} u_{\lambda_i,j}| \lesssim |k-j|^{-N} \lambda_i^{-N} 
\| \chi_j^{\lambda_i} u_{\lambda_i}\|_{L^\infty L^2}, \qquad |k-j| \gg 1
\end{equation}
 For $j \in \Z$ and $|I|=
\lambda^{4s}$ we define the localized trilinear expressions
\[
f_{I,j}:=\chi_I  u_{\lambda_1,j}
\bar u_{\lambda_2} u_{\lambda_3} 
 =     \chi_I  u_{\lambda_1,j}  \sum_{I' \subset I}^{|I'| = \mu^{4s}} \sum_{k_2,k_3}
  \chi_{I'}   \bar u_{\lambda_2,k_2} \cdot 
 \chi_{I'}  u_{\lambda_3,k_3} 
\]
Due to the above mentioned ordering of spatial scales there is some unique 
(up to $O(1)$) $k = k(j)$ so that the supports of $\chi_j^{\lambda_1}$ and
$\chi_k^{\lambda_{2,3}}$ overlap.
We will first bound $P_\lambda f_{I,j}$ in  $L^2$.  Using
\eqref{ineq:B2}  and duality  we obtain
\[
\|P_\lambda f_{I,j} \|_{L^2} \lesssim 
\frac{\lambda^\frac12}{\mu^{\frac12}} \|u_{\lambda_1,j}\|_{U^2_\Delta[I]}
\sum_{k_2,k_3}   \sum_{I' \subset I}^{|I'| = \mu^{4s}}  
 \frac{\| u_{\lambda_2,k_2}\|_{U^2_\Delta[I']}}{\langle k_2 - k(j)\rangle^{N}} 
\frac{\| u_{\lambda_3,k_3}\|_{U^2_\Delta[I']}}{\langle k_3 - k(j)\rangle^{N}}
\]
where the rapid decay away from when $k_{2,3} = k(j)$ is due to
\eqref{tails}.  Next we use Cauchy-Schwartz with respect to $I'$ and
then sum with respect to $k_2,k_3$ to get
\begin{equation}
\|P_\lambda f_{I,j} \|_{L^2} \lesssim 
\lambda^\frac12 \mu^{-\frac12-2s} \|\chi^\lambda_j u_{\lambda_1}\|_{U^2_\Delta[I]}
  \| u_{\lambda_2}\|_{X^s_{\Lambda,le}}
\| u_{\lambda_3}\|_{X^s_{\Lambda,le}}
\label{quad-app2}\end{equation}
The square summability 
with respect to space or time intervals is inherited from $u_{\lambda_{1}}$,
so we conclude that
\begin{equation}
\| P_\lambda (u_{\lambda_1} \bar u_{\lambda_2} u_{\lambda_3})\|_{Y^s_\Lambda}
\lesssim \lambda^{\frac12+2s} \mu^{-\frac12-2s} 
\|u_{\lambda_1}\|_{X^s_\Lambda}
  \| u_{\lambda_2}\|_{X^s_{\Lambda,le}}
\| u_{\lambda_3}\|_{X^s_{\Lambda,le}}
\label{xs-case22}\end{equation}
and similarly for the $Y^s_{\Lambda,le}$ norm. The summation with
respect to $\lambda$ and $\mu$ is straightforward.

We remark that this approach gives a better bound for high modulations
but works only when $s \geq -\frac14$. However, the estimate in $DU^2$  
 shows that there is some room beyond $s = -1/4$.

>From \eqref{ineq:B} by duality and  \eqref{tails} for the tails we obtain
\[
\|P_\lambda f_{I,j} \|_{DV^2_\Delta[I]} \lesssim 
\mu^{-1} \|\chi^\lambda_j u_{\lambda_1}\|_{U^2_\Delta[I]}
  \sum_{k_2,k_3} \sum_{I' \subset I}^{|I'| = \mu^{4s}}  
 \frac{\| u_{\lambda_2,k_2}\|_{U^2_\Delta[I']}}{\langle k_2 - k(j)\rangle^{N}} 
\frac{\| u_{\lambda_3,k_3}\|_{U^2_\Delta[I']}}{\langle k_3 - k(j)\rangle^{N}}.
\]
After  Cauchy-Schwartz with respect to $I'$ and $k_2$, $k_3$ summation we  obtain
\begin{equation}
\|P_\lambda f_{I,j} \|_{DV^2_\Delta[I]} \lesssim 
\mu^{-1-2s} \|\chi^\lambda_j u_{\lambda_1}\|_{U^2_\Delta[I]}
  \| u_{\lambda_2}\|_{X^s_{\Lambda,le}}
\| u_{\lambda_3}\|_{X^s_{\Lambda,le}}   
\label{quad-app}\end{equation}
Comparing this with \eqref{quad-app2} we see that \eqref{quad-app2}
is stronger at modulations $\geq \lambda \mu$ while \eqref{quad-app}
is stronger  at modulations $\leq \lambda \mu$. Precisely, from 
\eqref{quad-app2} we obtain
\[
\begin{split}
\|Q_{> \lambda \mu} P_\lambda f_{I,j}\|_{Y_\lambda[I]}
\lesssim & \ \lambda^{1+2s}
\|Q_{> \lambda \mu} P_\lambda f_{I,j}\|_{DU^2_\Delta}
 \\ \lesssim & \ \lambda^{1/2+2s} \mu^{-1/2}  \| P_\lambda f_{I,j}\|_{L^2}
\\ \lesssim & \
  \lambda^{1+2s} \mu^{-1-2s} 
  \|\chi^\lambda_j u_{\lambda_1}\|_{U^2_\Delta[I]}
  \| u_{\lambda_2}\|_{X^s_{\Lambda,le}}
\| u_{\lambda_3}\|_{X^s_{\Lambda,le}}
\\ \lesssim & \
\lambda^{1+3s} \mu^{-1-2s} \|\chi^\lambda_j u_{\lambda_1}\|_{X^s_\Lambda[I]}
  \| u_{\lambda_2}\|_{X^s_{\Lambda,le}}
\| u_{\lambda_3}\|_{X^s_{\Lambda,le}}
\end{split} 
\]
while from \eqref{quad-app} we have
\[
\|Q_{< \lambda \mu} P_\lambda f_{I,j}\|_{Y_\lambda}
\lesssim  \log(\mu/\lambda) \lambda^{1+3s} \mu^{-1-2s} 
\|\chi^\lambda_j u_{\lambda_1}\|_{X^s_\Lambda[I]}
  \| u_{\lambda_2}\|_{X^s_{\Lambda,le}}
\| u_{\lambda_3}\|_{X^s_{\Lambda,le}}
\]
where the logarithmic loss is due to the number of dyadic regions
between modulations $\lambda^2$ and $\lambda \mu$ arising 
in the conversion of the $DV^2$ norm into a $DU^2$ norm, see \eqref{besov}.

Again the square summability with respect to space\footnote{Here, as
  well as in all the other cases, we want to use the second bound in
  \eqref{dixsum} rather than \eqref{ixsum}, so the spatial summation
  must precede the $DV^2$ to $DU^2$ conversion.} or time intervals is
inherited from $u_{\lambda_{1}}$, so we obtain an improved form of
\eqref{xs-case22}, namely
\begin{equation}
\!\! \| P_\lambda (u_{\lambda_1} \bar u_{\lambda_2} u_{\lambda_3})\|_{Y^s_\Lambda}
\lesssim \!
 \Big(\frac{\lambda}{\mu}\Big)^{1+2s}\! \log\!\left(\frac{\mu}{\lambda}\right)\!
\|u_{\lambda_1}\|_{X^s_\Lambda}
  \| u_{\lambda_2}\|_{X^s_{\Lambda,le}}
\| u_{\lambda_3}\|_{X^s_{\Lambda,le}}
\label{xs-case2}\end{equation}
and similarly for the $Y^s_{\Lambda,le}$ norm. Thus $s \geq -\frac14$ is 
more than enough.

{\bf Case 2(b):} $ \alpha \ll \lambda \ll \mu$. 
This case is similar to the previous case in that the 
local energy norms give a crucial gain in the estimates.
This case is also different from the previous case in a fundamental
way, namely that the interaction is nonresonant.  Precisely, either
the output or at least one of the inputs must have modulation at least
$\lambda \mu$. In the latter case, there is a further gain due to our
definition of the $X^s$ respectively $X^s_{le}$ spaces.
Unfortunately, this gain disappears as $\alpha$ gets small, so we
cannot take good advantage of it, and instead we end up repeating the
arguments of Case 2(a). 

Again the placement of the complex conjugates is irrelevant, so we let
$\lambda_1 = \alpha$.  However we readjust the definition of $f_{I,j}$
to
\[
f_{I,j} = \chi_I \tilde P_{\lambda_1} (\chi_j^\lambda u_{\lambda_1}) u_{\lambda_2}
u_{\lambda_3}
\]
using the larger spatial scale $\delta x_\lambda$ instead of 
$\delta x_\alpha$ for the cutoffs.
Using a dual form of \eqref{ineq:C*2} as well as \eqref{tails} for
off-diagonal tails we obtain the trilinear $L^2$ bound
\[\begin{split} 
\|P_\lambda f_{I,j} \|_{L^2} \lesssim  
\min\{\alpha^\frac12 \lambda^{-1} \mu^\frac12, 1\}
 \lambda^\frac12 \mu^{-\frac12} \|\chi^\lambda_j u_{\lambda_1}\|_{U^2_\Delta[I]}
    \sum_{k_2,k_3}  \sum_{I' \subset I}^{|I'| = \mu^{4s}} 
 \frac{\| u_{\lambda_2,k_2}\|_{U^2_\Delta[I']}}{\langle k_2 - k(j)\rangle^{N}} 
\frac{\| u_{\lambda_3,k_3}\|_{U^2_\Delta[I']}}{\langle k_3 - k(j)\rangle^{N}}.
\end{split} 
\]
On the other hand from \eqref{ineq:C} by duality  we obtain
\[
\begin{split}
\|P_\lambda f_{I,j} \|_{DV^2[I]} \lesssim 
\min\{\alpha^\frac12 \lambda^{-1} \mu^\frac12, 1\}
\mu^{-1} \|\chi^\lambda_j u_{\lambda_1}\|_{U^2_\Delta [I]}
   \sum_{k_2,k_3}  \sum_{I' \subset I}^{|I'| = \mu^{4s}}  
 \frac{\| u_{\lambda_2,k_2}\|_{U^2_\Delta[I']}}{\langle k_2 - k(j)\rangle^{N}} 
\frac{\| u_{\lambda_3,k_3}\|_{U^2_\Delta[I']}}{\langle k_3 - k(j)\rangle^{N}}.
\end{split} 
\]
Using the former for modulations $\geq \lambda \mu$ and the latter for
smaller modulations we obtain
\begin{equation} \label{2binter}
\begin{split} 
\|P_\lambda f_{I,j} \|_{Y_\lambda[I]}  \lesssim &\ \log(\lambda/\mu)
\min\{\alpha^\frac12 \lambda^{-1} \mu^\frac12, 1\}
\lambda^{1+2s} \mu^{-1} 
\\ & \times \|\chi^\lambda_j u_{\lambda_1}\|_{U^2_\Delta[I]}
 \sum_{k_2,k_3} \sum_{I' \subset I}^{|I'| = \mu^{4s}} 
 \frac{\| u_{\lambda_2,k_2}\|_{U^2_\Delta[I']}}{\langle k_2 - k(j)\rangle^{N}} 
\frac{\| u_{\lambda_3,k_3}\|_{U^2_\Delta[I']}}{\langle k_3 - k(j)\rangle^{N}}.
\end{split} 
\end{equation}
 After Cauchy-Schwarz with respect to $I'$ this gives
\[
\begin{split} 
\|P_\lambda f_{I,j} \|_{Y_\lambda[I]} \lesssim  \log(\lambda/\mu)
\min\{\alpha^\frac12 \lambda^{-1} \mu^\frac12, 1\}
\lambda^{1+2s} \mu^{-1-2s} 
  \|\chi^\lambda_j u_{\lambda_1}\|_{U^2_\Delta[I]}
  \|u_{\lambda_2}\|_{X^s_{\Lambda,le}}
\| u_{\lambda_3}\|_{X^s_{\Lambda,le}}
\end{split} 
\]
The square summability with respect to spatial intervals is inherited
from $u_{\lambda_{1}}$, so we conclude that
\begin{equation}
\begin{split} 
\| P_\lambda (u_{\lambda_1} \bar u_{\lambda_2} u_{\lambda_3})\|_{Y^s_\Lambda}
\lesssim  \log(\lambda/\mu)
\min\{\alpha^\frac12 \lambda^{-1} \mu^\frac12, 1\}
\alpha^{-s} \lambda^{1+3s} \mu^{-1-2s}
 \|u_{\lambda_1}\|_{X^s_\Lambda}
  \| u_{\lambda_2}\|_{X^s_{\Lambda,le}}
\| u_{\lambda_3}\|_{X^s_{\Lambda,le}}
\end{split}
\label{xs-case32}\end{equation}
The summation with respect to $\alpha$ and $\mu$ is straightforward.

The local energy $Y^s_{\Lambda,le}$ estimate does not pose additional
difficulties. We first estimate the frequency $\alpha$ factor in
\eqref{2binter} by 
\[
\|\chi^\lambda_j u_{\lambda_1}\|_{U^2_\Delta[I]} 
\lesssim \alpha^{-s} \|u_{\lambda_1}\|_{X^s_\Lambda}
\]  
Then we sum over all $I \subset [0,1]$ and use Cauchy-Schwarz  
with respect to $I' \subset [0,1]$.
Thus the time interval summation is inherited from the highest
frequencies, and we obtain the same constants as in \eqref{xs-case32}.

{\bf Case 2(c):} $ \lambda \ll \alpha \ll \mu$.  In this
case, as $\alpha$ increases, the usefulness of the local energy norms
decreases.  The interaction is still nonresonant, i.e.  either the output
or at least one of the inputs must have modulation at least $\lambda
\mu$. However, this time we can fully exploit the gain coming from 
the better bounds for high modulation inputs. On the other hand if the output 
has high modulation then we get to use the better constant in \eqref{ineq:C2}
(compared to \eqref{ineq:C*2}).

Again the placement of complex conjugates does not matter,
so we let $\alpha = \lambda_1$ but we return to our original notation 
in Case 2(a),
\[
f_{I,j} :=   u_{\lambda_1,j} \bar u_{\lambda_2}  u_{\lambda_3} 
\]
We claim that the following bound holds for $|I| = \lambda^{4s}$:
\begin{equation}
 \|P_\lambda   f_{I,j}\|_{Y_\lambda[I]} 
\lesssim  C
\sup_{I' \subset I,\ |I'| = \alpha^{4s}} \|\chi^\alpha_j u_{\lambda_1}\|_{X_\alpha[I']}
\| u_{\lambda_2}\|_{X^s_{\Lambda,le}} \|u_{\lambda_3}\|_{X^s_{\Lambda,le}}
\label{c2c}\end{equation}
where $C=C(\lambda,\alpha,\mu)$ is given by
\[
C(\lambda,\alpha,\mu) =  \log(\mu/\lambda)  \lambda^{1+2s}\mu^{-1-2s}(
 \lambda^{\frac12} \alpha^{-\frac12} +  \alpha^{-1-2s}   
\min\{1,\lambda^\frac12 \alpha^{-1} \mu^{\frac12}\}).
\]
Here the second term is at most as large as the first if $s\ge -1/4$
and could be omitted.  Using \eqref{c2c} we conclude the proof in
this case. For the local energy decay norm $Y^s_{\Lambda,le}$ we square sum
over $I \subset [0,1]$ and inherit the square summability from
$u_{\lambda_1}$, so there is no further loss; we obtain
\[
\| P_\lambda(u_{\lambda_1} \bar u_{\lambda_2} u_{\lambda_3})\|_{Y^s_{\Lambda,le}}
\lesssim  \log(\mu/\lambda) \frac{\lambda^{\frac32+3s}} {\alpha^{\frac12+s}\mu^{1+2s}}
 \| u_{\lambda_1}\|_{X^s_{\Lambda,le}}
\| u_{\lambda_2}\|_{X^s_{\Lambda,le}} \|u_{\lambda_3}\|_{X^s_{\Lambda,le}}
\]
For the $Y^s_\Lambda$ norm we square sum over $j$ in \eqref{c2c}. 
The $j$ summation is  inherited from $v_{\lambda_1}$; however we cannot interchange
the $I'$ supremum with the square summation in $j$. Instead we relax 
the supremum to an $l^2$ norm, which allows us to interchange norms 
 but causes  
an $( \alpha/\lambda)^{-2s}$ loss  in the  $I'$ summation.
This yields the worse bound
 \[
\| P_\lambda(u_{\lambda_1} \bar u_{\lambda_2} u_{\lambda_3})\|_{Y^s_\Lambda}
\lesssim  \log(\mu/\lambda)
 \frac{\lambda^{\frac32+5s}} {\alpha^{\frac12+3s}\mu^{1+2s}} \| u_{\lambda_1}\|_{X^s_{\Lambda}}
\| u_{\lambda_2}\|_{X^s_{\Lambda,le}} \|u_{\lambda_3}\|_{X^s_{\Lambda,le}}
\]
It is easy  to check that  the $\alpha$ and $\mu$
summation  is favorable since $s > - 3/10$.

We now prove \eqref{c2c}. We begin by writing 
\[
f_{I,j} :=   \sum_{k_2,k_3} \chi_I u_{{\lambda_1},j}  
\bar u_{\lambda_2,k_2}  u_{\lambda_3,k_3} 
\]
The off-diagonal terms where $k_2$ or $k_3$ are away from $k(j)$ 
are estimated directly as in Case 2(a),(b) using the rapid decay
in \eqref{tails}. It remains to consider the diagonal contribution
which we write as 
\[
f^d_{I,j} :=   \chi_I u_{{\lambda_1},j}  
\bar u_{\lambda_2,k}  u_{\lambda_3,k}, \qquad k = k(j) 
\]
We actually obtain a finite sum of such terms, which we suppress in
the notation.

 We consider a time  interval decomposition of  $f^d_{I,j}$,
\[
 f^d_{I,j}
= 
\sum_{I' \subset I}^{|I'| = \alpha^{4s}} 
\chi_{I'} u_{{\lambda_1},j} \sum_{I'' \subset I'}^{|I''| = \mu^{4s}} 
\chi_{I''}   \bar u_{\lambda_2,k} \cdot\chi_{I''} 
  \bar u_{\lambda_3,k} := \sum_{I'' \subset I}^{|I''| = \mu^{4s}} f^d_{I'',j}
\]
Since we will use modulation truncations which are nonlocal in time,
for the rest of the argument we extend each of the three factors above
to solutions to the homogeneous Schr\"odinger equation outside $I'$
resp $I''$.  A key reason for working with these extensions is that
they satisfy better high modulation bounds than the original interval
localized functions, see \eqref{em2}.  Recalling that
 $E_J$ is the  extension
operator for the interval $J$ we define the extension
$f^e_{I'',j}$ of $f^d_{I'',j}$ by
\[
f^e_{I'',j} = E_{I'} (u_{{\lambda_1},j}) \overline{ E_{I''} (
 u_{\lambda_2,k})}  E_{I''} ( u_{\lambda_3,k}) 
\]
For $f^{e}_{I'',j}$ we establish a global $L^2$ bound, as well as a
stronger low modulation $DV^2_\Delta$ estimate; the balance between
these two bounds is at modulation $\alpha \mu$.  For the $L^2$ bound
we recall that \eqref{ineq:C2} holds on the whole real line. Hence we
obtain the $L^2$ estimate
\begin{equation}\label{c2cl2}
\|P_\lambda  f^e_{I'',j}\|_{L^2} \lesssim \ \lambda^\frac12 \mu^{-\frac12}
 \| u_{{\lambda_1},j}\|_{U^2[I']}
\| u_{\lambda_2,k}\|_{U^2[I'']}\| u_{\lambda_3,k}\|_{U^2[I'']} 
\end{equation}
Next we consider the low modulations $ Q_{\ll \alpha \mu} P_\lambda
f^e_{I'',j}$.  Since we are in a nonresonant case, this is nonzero
only if one of the three factors has high modulation ($\gtrsim \alpha
\mu$). But for the high modulations we have better bounds.   We write
\[
Q_{\ll \alpha \mu} P_\lambda f^e_{I'',j} = 
Q_{\ll \alpha \mu} P_\lambda (g^1_{I'',j}+ g^2_{I'',j}+ g^3_{I'',j}) 
\]
where
\[
\begin{split}
g^1_{I'',j} =  & \ Q_{\gtrsim \alpha \mu} E_{I'}
 u_{{\lambda_1},j} \cdot  \overline{ E_{I''}   u_{\lambda_2,k}}
\cdot E_{I''} u_{\lambda_3,k},
\\
g^2_{I'',j}= & \ Q_{\ll \alpha \mu} E_{I'}
 u_{{\lambda_1},j}  \cdot  \overline{Q_{\gtrsim \alpha \mu}   E_{I''}   u_{\lambda_2,k}} \cdot
E_{I''} u_{\lambda_3,k},
\\ 
g^3_{I'',j}= & \ Q_{\ll \alpha \mu} E_{I'} 
 u_{{\lambda_1},j}  \cdot \overline{ Q_{\ll \alpha \mu} E_{I''}   u_{\lambda_2,k}}
 \cdot Q_{\gtrsim \alpha \mu} E_{I''} u_{\lambda_3,k},
\end{split}
\]
We will only consider the first term $g^1_{I'',j}$; the analysis for the other
two terms is similar but the result is better since the high modulation gain
of $\alpha^{-1-2s}$ is replaced by $\mu^{-1-2s}$.

For the  high modulation truncation we use the $L^2$ bound in \eqref{em2}.
Then by  \eqref{ineq:C*2} and duality we estimate $P_\lambda g^{1}_{I'',j}$ as
\[
\begin{split} 
\| P_\lambda g^{1}_{I'',j}\|_{DV^2_\Delta} \lesssim & \ C_1
  \| Q_{\gtrsim \alpha \mu} E_{I'} u_{{\lambda_1},j} \|_{L^2} 
\|  u_{\lambda_2,k}\|_{U^2[I'']} 
\| u_{\lambda_3,k}\|_{U^2[I'']} 
\\ \lesssim & \  C_1 \alpha^{-1-2s} (\alpha \mu)^{-\frac12} \| u_{{\lambda_1},j}\|_{X_\lambda[I']}
\|  u_{\lambda_2,k}\|_{U^2[I'']} 
\| u_{\lambda_3,k}\|_{U^2[I'']} 
\end{split} 
\]
with $C_1 = \min\{\mu^{\frac12} \alpha^{-1} \lambda^{\frac12},1\}
\mu^{-\frac12} \alpha^{\frac12}$. Adding the similar bounds for 
$P_\lambda g^{2}_{I'',j}$ and $P_\lambda g^{3}_{I'',j}$ we obtain
\[
\|Q_{\ll \alpha \mu} P_\lambda f^e_{I'',j} \|_{DV^2_\Delta} \lesssim 
 C_1 \alpha^{-\frac32-2s} \mu^{-\frac12} \| u_{{\lambda_1},j}\|_{X_\lambda[I']}
\|  u_{\lambda_2,k}\|_{X_\lambda[I'']} 
\| u_{\lambda_3,k}\|_{X_\lambda[I'']} 
\]
We combine this with the high modulation control derived from 
\eqref{c2cl2} to conclude that
\begin{equation}\label{c2cdv2}
\|P_\lambda f^e_{I'',j} \|_{DV^2_\Delta} \lesssim 
 C_2 \| u_{{\lambda_1},j}\|_{X_\lambda[I']}
\|  u_{\lambda_2,k}\|_{X_\lambda[I'']} 
\| u_{\lambda_3,k}\|_{X_\lambda[I'']} 
\end{equation}
where 
\[
C_2 = C_1 \alpha^{-\frac32-2s} \mu^{-\frac12} + \lambda^{\frac12} \mu^{-1}
\alpha^{-\frac12}
\]
Since neither \eqref{c2cl2} nor \eqref{c2cdv2} 
contain modulation localizations, we can truncate both of them to the interval $I''$
and obtain the similar bounds for  $f^d_{I'',j}$.  Given the definition of the $Y_\lambda$
space, from \eqref{c2cl2} nor \eqref{c2cdv2} for $f^d_{I'',j}$ we obtain
\begin{equation}\label{c2cy}
\| f^d_{I'',j}\|_{Y_\lambda} \lesssim \log(\mu/\lambda) \lambda^{1+2s} C_2  
\| u_{{\lambda_1},j}\|_{X_\lambda[I']}
\|  u_{\lambda_2,k}\|_{X_\lambda[I'']} 
\| u_{\lambda_3,k}\|_{X_\lambda[I'']} 
\end{equation}
where the logarithmic factor counts the number of dyadic regions
between modulations $\lambda^2$ and $\alpha \mu$ arising in the
transition from $DV^2_\Delta$ to $DU^2_\Delta$.  Since $C=
\log(\mu/\lambda) \lambda^{1+2s} \mu^{-2s} C_2$, the bound \eqref{c2c}
follows from \eqref{c2cy} after summation over $I'' \subset I$.

{\bf Case 2(d):} $ \lambda \ll \alpha \sim  \mu $.

For the most part this case is identical to Case 2(c).
 The interaction is still nonresonant, i.e.  either the output
or at least one of the inputs must have modulation at least 
$\mu^2$.  The high modulation output bound is unchanged.
 We have singled out this case because of a peculiarity
which occurs when the middle (conjugated) factor 
is at high modulation (see \eqref{ineq:D**2}  and  \eqref{ineq:D}).
This leads to a constant $C_1 =  (\lambda/\mu)^\frac14$ in the 
bound for $g^{2}_{I'',j}$, which in turn yields a constant $C$ in 
the counterpart of \eqref{c2c} of the form
\[
C(\lambda,\mu,\mu) =   \log(\mu/\lambda)( \lambda^{\frac12} \mu^{-\frac32-2s}+ 
  \mu^{-1-2s}  \mu^{-1-2s}   ({\lambda}/{\mu})^{\frac14})
\]
The first part is as in Case 2(c), but for the second
we need to sum up with respect to $\mu$ in the expression 
\[
 \log(\mu/\lambda) \lambda^{1+3s} \mu^{-s}  ({\lambda}/{\mu})^{2s} 
  \mu^{-1-2s}  \mu^{-1-2s}    ({\lambda}/{\mu})^{\frac14}
= \log(\mu/\lambda)  \mu^{7s+\frac94} \lambda^{\frac54+5s} 
\]
which is favorable since $s > - 9/28$. We note that this is the worst among all cases 
we have considered. 
\end{proof}

\section{The energy conservation}

In this section we study the weighted energy conservation for
solutions $u$ to \eqref{nls}. In order to keep the notations and the
exposition as simple as possible, here and in the next section we will
restrict ourselves to the endpoint case $s = -\frac14$. This suffices
in order to obtain the $H^s$ energy estimates and to fully prove
Theorem~\ref{main}, but not the space-time estimates \eqref{mainxs} for
$s > -\frac14$. The arguments here can easily be adapted to all larger
$s$.
 The main result here is Proposition~\ref{p:en},
which will be used in the last section of the paper 
to prove the first part of Proposition~\ref{penergy}, namely 
the bound \eqref{en-nonlin}.

Given a positive multiplier $a$ we  set 
\[
E_0(u) = \langle A(D) u,u\rangle:=\|u\|_{H^a}^2
\]
For the straight $H^s_\Lambda$ energy conservation it suffices to take
\[
 a(\xi) = (\Lambda^2+\xi^2)^{s} 
\]
However, as in \cite{MR2353092}, in order to gain the uniformity in $t$
required by \eqref{xs} we need to allow a slightly larger class of
symbols.

\begin{definition}
a)  Let $\Lambda \ge 1$. Then  $S_\Lambda$ is the class of
  spherically symmetric symbols with the following properties:

(i) symbol regularity,
\[
| \partial^\alpha a(\xi)| \lesssim a(\xi) (\Lambda^2+\xi^2)^{-\alpha/2}
\]

(ii) decay at infinity,
\[
     a(\xi)\ge (\Lambda^2 + \xi^2)^{-1/2} 
\]
and 
\[ 
[0,\infty) \ni  \xi \to a(\xi) (\Lambda^2+\xi^2)^{1/2} 
\]
is nondecreasing. 

b) If $a$ satisfies (i) and (ii) then we say that $d$ is dominated by  $a$, 
$d \in S(a)$,  if 
 \[ |\partial^\alpha d| \lesssim a (\Lambda^2+\xi^2)^{-\alpha/2} \]
with constant depending only on $\alpha$. 
\end{definition}
For such symbols $a$ we denote by $X^a$ respectively $X^a_{le}$ the
spaces defined as $X^{-\frac14}_\Lambda$ respectively
$X^{-\frac14}_{\Lambda,le}$ but with the symbol
$(\Lambda^2+\lambda^{2})^{-\frac14} $ replaced by $a(\lambda)$. Here
the spatial and temporal scales are the ones corresponding to $s =
-\frac14$, namely $\delta x = 1$, $\delta t = \lambda^{-1}$.

We compute the derivative of $E_0$ along the flow,
\[
\frac{d}{dt} E_0(u) =  R_4(u) = 2 \Re \langle i A(D) u,|u|^2 u\rangle 
\]
We write $R_4$ as a multilinear operator in the Fourier space,
\[
R_4(u) = 2 \Re \int_{P_4} i a(\xi_0) \hat u(\xi_0)
\hat u(\xi_1) \overline{ \hat u(\xi_2)
\hat u(\xi_3)} d\sigma  
\]
where 
\[
P_4 = \{ \xi_0+\xi_1-\xi_2-\xi_3=0\}
\]
This can be symmetrized,
\[
R_4(u) = \frac12 \Re \int_{P_4} i
(a(\xi_0)+a(\xi_1)-a(\xi_2)-a(\xi_3)) \hat u(\xi_0) \hat u(\xi_1) 
\overline{\hat
u(\xi_2) \hat u(\xi_3)} d\sigma.
\]
Following a variation of the $I$-method, see Tao~\cite{MR2233925}-3.9
and references therein, we seek to cancel this term by
perturbing the energy, namely by
\[
E_1(u) =  \int_{P_4}
b_4(\xi_0,\xi_1,\xi_2,\xi_3) \hat u(\xi_0) \hat u(\xi_1) \overline{\hat
u(\xi_2) \hat u(\xi_3)} d\sigma
\]
To determine the best choice for $b_4$  we compute
\[
\begin{split}
\frac{d}{dt} E_1(u) =  \int_{P_4} i 
b_4(\xi_0,\xi_1,\xi_2,\xi_3)
(\xi_0^2+\xi_1^2- \xi_2^2- \xi_3^2) \hat u(\xi_0) \hat u(\xi_1) \overline{\hat
u(\xi_2) \hat u(\xi_3)} d\sigma  + R_6(u)
\end{split}
\]
where $R_6(u)$ is given by
\[
R_6(u) = 4 \Re \int _{\xi_0+\xi_1-\xi_2-\xi_3=0} i 
b_4(\xi_0,\xi_1,\xi_2,\xi_3) \widehat{|u|^2u}(\xi_0) \hat u(\xi_1) \overline{\hat
u(\xi_2) \hat u (\xi_3)} d\sigma
\]
To achieve the cancellation of the quadrilinear form we define $b_4$ 
by 
\begin{equation}
b_4(\xi_0,\xi_1,\xi_2,\xi_3) = -  \frac{a(\xi_0)+a(\xi_1)-a(\xi_2)-a(\xi_3)}
{\xi_0^2+\xi_1^2- \xi_2^2- \xi_3^2} \qquad \text{for }\
(\xi_0,\xi_1,\xi_2,\xi_3)
\in P_4.
\label{b4}\end{equation}
Summing up the result of our computation, we obtain
\begin{equation}\label{en-relation}
\frac{d}{dt} (E_0(u) +E_1(u)) = R_6(u)
\end{equation}
We integrate this relation to estimate $E_0(u)=\|u\|_{H^a}^2$ uniformly in time:

\begin{proposition}\label{p:en}
Let $a \in S_\Lambda$. Then for any $L^2$ solution $u$ to \eqref{nls} 
in the time interval $[0,1]$ we 
have
\begin{equation}
 \|u\|_{L^\infty H^a}^2 \lesssim \|u_0\|_{H^a}^2
+  \|u\|_{L^\infty H^a}^2 \|u\|_{L^\infty H^{-\frac12}_\Lambda}^2
+ \| u\|_{X^a \cap X^a_{le}}^2 
\| u\|_{X^{-\frac14}_\Lambda \cap X^{-\frac14}_{\Lambda,le}}^4 
\end{equation}
\end{proposition}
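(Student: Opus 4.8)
The plan is to integrate the energy relation \eqref{en-relation} over $[0,t]$ for $t\in[0,1]$, which gives
\[
E_0(u(t)) = E_0(u_0) + \big(E_1(u_0)-E_1(u(t))\big) + \int_0^t R_6(u)\, ds .
\]
Since $E_0(u(t)) = \|u(t)\|_{H^a}^2$, taking the supremum over $t$ reduces the proposition to two estimates: a fixed-time bound $\sup_t|E_1(u(t))| \lesssim \|u\|_{L^\infty H^a}^2\|u\|_{L^\infty H^{-\frac12}_\Lambda}^2$ for the quadrilinear correction (which also controls the boundary term $E_1(u_0)$), and a space-time bound $|\int_0^1 R_6(u)\, ds| \lesssim \|u\|_{X^a\cap X^a_{le}}^2\|u\|_{X^{-\frac14}_\Lambda\cap X^{-\frac14}_{\Lambda,le}}^4$ for the sextilinear remainder. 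The engine for both is a pointwise bound on the multiplier $b_4$ of \eqref{b4}.

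The first step I would carry out is the multiplier analysis. On $P_4$ one has the resonance factorization $\xi_0^2+\xi_1^2-\xi_2^2-\xi_3^2 = 2(\xi_0-\xi_2)(\xi_0-\xi_3)$, so that $b_4$ is the ratio of the symbol difference $a(\xi_0)+a(\xi_1)-a(\xi_2)-a(\xi_3)$ to $2(\xi_0-\xi_2)(\xi_0-\xi_3)$. Using the relations $\xi_3-\xi_1=\xi_0-\xi_2$ and $\xi_2-\xi_1=\xi_0-\xi_3$ valid on $P_4$, the numerator is a genuine second difference of $a$ in the increments $\xi_0-\xi_2$ and $\xi_0-\xi_3$; combined with the symbol regularity $|\partial^\alpha a|\lesssim a(\Lambda^2+\xi^2)^{-\alpha/2}$ of the class $S_\Lambda$ this yields, in the near-diagonal regime, $|b_4|\lesssim a(\xi_{\max})(\Lambda^2+\xi_{\max}^2)^{-1}$, and away from the diagonal the cruder bound with the two difference factors surviving in the denominator. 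The outcome I want is a frequency-localized bound that lets me distribute the weight so that the two highest frequencies carry the $a$-weight and the remaining factors carry the $(\Lambda^2+\cdot)^{-1/2}$ weight; here property (ii) of $S_\Lambda$, namely $a\ge(\Lambda^2+\xi^2)^{-1/2}$ together with the monotonicity of $a(\Lambda^2+\xi^2)^{1/2}$, is what guarantees convergence of the resulting dyadic sums.

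With the multiplier bound in hand, the estimate for $E_1$ is purely fixed-time. I would Littlewood--Paley decompose the four factors, bound $b_4$ on each dyadic block, and estimate the integral by H\"older, placing the two lower-frequency factors in $L^\infty$ (paying a Bernstein factor) and the two higher in $L^2$. The decay from $b_4$ compensates the Bernstein loss, the two highest frequencies are absorbed into $\|u\|_{H^a}^2$ and the two lowest into $\|u\|_{H^{-\frac12}_\Lambda}^2$, with the off-diagonal dyadic sums controlled by a power $(\mathrm{low}/\mathrm{high})^\delta$. No space-time norm enters, consistent with the form of the second term in the statement.

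The bulk of the work, and the main obstacle, is the sextilinear term $\int_0^1 R_6$. After substituting the expansion of $\widehat{|u|^2u}$, this becomes an $L^2_{t,x}$ pairing of the $b_4$-weighted projection of $|u|^2u$ against the trilinear product $u\bar u\bar u$ at comparable output frequency $\xi_0$. I would decompose all six frequencies dyadically, estimate each trilinear group in $L^2_{t,x}$ using the trilinear bound \eqref{tril2} together with the embedding \eqref{em} of $X_\lambda$ into $U^2_\Delta$, and use the frequency decay from $b_4$ to carry out the summation. The delicate point is the resonant configurations, where two high frequencies nearly cancel and the plain $X^s$ (equivalently $U^2_\Delta$) bounds fail to be summable; there I would invoke the local energy components $X^a_{le}$ and $X^{-\frac14}_{\Lambda,le}$, exploiting the unit-scale spatial localization, which is exactly the regime saturated by the zero-speed soliton. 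Assigning the $X^a$-type norm to the two highest frequencies and the $X^{-\frac14}$-type norm (plain or local-energy, whichever is favorable) to the other four, and summing, produces the third term. Since $s=-\frac14$ is the exact threshold for this energy estimate, the bookkeeping must be lossless, and tracking which interactions force the local energy norm is where the difficulty concentrates.
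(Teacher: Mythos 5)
Your high-level skeleton is the paper's own: integrate \eqref{en-relation}, then prove a fixed-time bound for $E_1$ (the paper's Lemma~\ref{e1bd}) and a space-time bound for $\int_0^1 R_6$ (Lemma~\ref{r6}), both powered by size/regularity bounds on $b_4$ (Proposition~\ref{pbbounds}, whose bound is $|b_4|\lesssim a(\lambda)\alpha^{-1}\mu^{-1}$ with $a$ at the \emph{lowest} frequency). Your $E_1$ argument is essentially the paper's; the swap of weights ($H^a$ on the high pair, $H^{-\frac12}_\Lambda$ on the low pair, rather than the reverse) is harmless since property (ii) of $S_\Lambda$ gives $a(\lambda)\lambda \lesssim a(\mu)\mu$. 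The sextilinear term, however, is where the proof lives, and there your plan has two genuine gaps.

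First, your norm assignment is backwards, and it fails. In the paper's reduction \eqref{iest}, the $X^a$ norms go on the two \emph{lowest} of the six frequencies $\mu_1\le\dots\le\mu_6$, and the four \emph{highest} are measured in $X^{-\frac14}_\Lambda\cap X^{-\frac14}_{\Lambda,le}$. This is forced by a step your proposal never addresses: the $X$ norms control each dyadic piece only on time intervals of length $\mu_j^{-1}$, so bounding $\int_0^1 R_6$ requires summing over such intervals, and each factor measured in a sup-in-time norm costs a factor $\mu_j^{\frac12}$ in the passage from the global estimate \eqref{jest} to \eqref{iest} (the interchange of $\sup_I$ with the spatial sum $\sum_k$); also, since $b_4$ carries $a$ at the lowest frequency, only the lowest factors can absorb the $a$-weight conversions $a(\mu_j)^{-\frac12}$. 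Test your assignment on the block $\mu_1=\dots=\mu_4=\nu\ll\mu_5=\mu_6=\mu$ (outer frequencies all $\sim\nu$, inner $\sim(\nu,\mu,\mu)$, $\lambda_0\sim\nu$) with $a(\xi)=(\Lambda^2+\xi^2)^{-\frac14}$: the symbol gives $a(\nu)\nu^{-2}$, the core estimate gives $C_L\lesssim\nu/\mu$ (the paper's Case A1), the two $\mu$-factors in $X^a$ cost $\mu^{\frac12}a(\mu)^{-\frac12}$ each and the four $\nu$-factors in local energy cost $\nu^{\frac14}$ each, so the total constant is $a(\nu)/a(\mu)\sim(\mu/\nu)^{\frac12}$, which is unbounded; the paper's assignment gives $(\nu/\mu)^{\frac12}$ instead. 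Even the charitable reading where the high pair is put in $X^a_{le}$ only improves this to $a(\nu)\nu/(a(\mu)\mu)$, which for the admissible endpoint symbol $a(\xi)=(\Lambda^2+\xi^2)^{-\frac12}$ equals $1$ with no off-diagonal decay, so the dyadic sums over infinitely many scales still diverge.

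Second, estimating ``each trilinear group in $L^2_{t,x}$ via \eqref{tril2}'' is insufficient, and the delicate regime is not where you put it. In the nonresonant configuration of the paper's Case B2, $\lambda_1\le\lambda_2=\lambda_3\ll\lambda_4\le\lambda_5=\lambda_6$ (say outer $\sim\nu$, inner $\sim\mu$, $\lambda_0\ll\nu$), two applications of \eqref{tril2} plus Cauchy--Schwarz across the pairing give $C_L\lesssim\lambda_0\nu^{-\frac12}\mu^{-\frac12}$, which misses the required bound \eqref{smalll0}, namely $C_L\lesssim\lambda_0\mu_4^{-\frac12}\mu_6^{-\frac12}=\lambda_0\mu^{-1}$, by the unbounded factor $(\mu/\nu)^{\frac12}$; inserted into \eqref{iest} this leaves a constant of size $(\mu/\nu)^{\frac14}$ at the endpoint symbol, beyond repair by any summation. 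The loss comes precisely from Cauchy--Schwarz, which discards the fact that the two trilinear outputs have nearly disjoint modulation supports. The paper's remedy is a modulation argument: the interaction forces some factor to have modulation $\gtrsim\lambda_4\lambda_6$, and the corresponding high-modulation $L^2$ gain $(\lambda_4\lambda_6)^{-\frac12}$, combined with $L^8L^4$ Strichartz and Bernstein for $P_{\lambda_0}$, yields \eqref{smalll0}. Your diagnosis --- that the danger lies in \emph{resonant} high-high-to-low interactions, to be cured by local energy norms --- is inverted: the local energy norms enter for \emph{every} configuration through the interval-summation bookkeeping described above, whereas the case your $L^2$ strategy actually misses is \emph{nonresonant} and needs modulation analysis, not spatial localization.
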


The proposition follows directly from the bounds for $E_1(u)$ and of $R_6(u)$
in Lemmas~\ref{e1bd}, \ref{r6} below. The aim of the rest of this section 
is to prove these two lemmas.

In order to estimate the size of $E_1(u)$ and of $R_6(u)$ we need to
understand the size and regularity of $b$. A-priori $b$ is only
defined on the diagonal $P_4$. However, in order to separate variables
it is convenient to extend it off diagonal in a favorable way.  The
next lemma is a more precise version of a similar result in
\cite{MR2353092}; the additional information is needed for the proof
of the local energy decay estimates in the next section.

\begin{proposition}
  Assume that $a \in S_\Lambda$ and $d\in S(a)$. Then there exist functions 
$b_4$ and $c_4$ such that 
\[ \begin{split} d(\xi_0) +d(\xi_1) - d(\xi_2) - d(\xi_3) 
= &  b_4(\xi_0, \xi_1,\xi_2,\xi_3) ( \xi_0^2 + \xi_1^2 - \xi_2^2 - \xi_3^2) 
\\ & + c_4(\xi_0,\xi_1,\xi_2, \xi_3) ( \xi_0 + \xi_1 - \xi_2 - \xi_3) 
\end{split} 
\]
which  for each
  dyadic
\begin{equation}\label{org}
 \lambda \leq \alpha \leq \mu, \qquad 
\xi_0 \sim \lambda,\quad  \xi_2\sim \alpha, \quad \xi_1, \xi_3 \sim \mu 
\end{equation}
 satisfy the size and regularity conditions
\begin{equation} \label{b4c4}
\begin{split}
| \partial_0^{\beta_0} \partial_1^{\beta_1}\partial_2^{\beta_2}
\partial_2^{\beta_3} 
 b_4(\xi_0,\xi_1,\xi_2,\xi_3)| \lesssim & \  a(\lambda) \alpha^{-1}\mu^{-1}
 \lambda^{-\beta_0} \alpha^{-\beta_2} \mu^{-\beta_1-\beta_3}
\\
| \partial_0^{\beta_0} \partial_1^{\beta_1}\partial_2^{\beta_2}
\partial_3^{\beta_3} 
 c_4(\xi_0,\xi_1,\xi_2,\xi_3)| \lesssim & \  a(\lambda) \alpha^{-1}
 \lambda^{-\beta_0} \alpha^{-\beta_2} \mu^{-\beta_1-\beta_3}
\end{split}
\end{equation}
with implicit constants   dependent on the $\beta_j$'s but
independent of $\lambda,\alpha,\mu$.

\label{pbbounds}\end{proposition}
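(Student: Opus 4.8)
The plan is to produce $b_4$ and $c_4$ by two successive applications of the fundamental theorem of calculus, one in each of the two null directions $m:=\xi_0+\xi_1-\xi_2-\xi_3$ and $r:=\xi_0^2+\xi_1^2-\xi_2^2-\xi_3^2$, and then to read off the size and regularity of the resulting coefficients from the divided–difference structure. Throughout I abbreviate $P=\xi_0+\xi_1$ and $P'=\xi_2+\xi_3$, so that $m=P-P'$, and I regard $\xi_0,\xi_2,P,P'$ as independent coordinates (the change of variables is triangular, hence globally valid). Writing $\Phi(P,\xi_0)=d(\xi_0)+d(P-\xi_0)$ and $g(\xi)=d(\xi)+d(P'-\xi)$, I first split
\[
d(\xi_0)+d(\xi_1)-d(\xi_2)-d(\xi_3)=\bigl[\Phi(P,\xi_0)-\Phi(P',\xi_0)\bigr]+\bigl[g(\xi_0)-g(\xi_2)\bigr].
\]
The first bracket equals $m\int_0^1 d'(P'+tm-\xi_0)\,dt$ and contributes to $c_4$. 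For the second bracket I use that $g$ is symmetric about $P'/2$, so $g(\xi_2)=g(\xi_3)$ and the divided difference $g[\xi_2,\xi_3]$ vanishes; consequently
\[
g(\xi_0)-g(\xi_2)=(\xi_0-\xi_2)(\xi_0-\xi_3)\,g[\xi_0,\xi_2,\xi_3]=\tfrac12\,g[\xi_0,\xi_2,\xi_3]\,\bigl(r-m(P+P'-2\xi_0)\bigr),
\]
since $2(\xi_0-\xi_2)(\xi_0-\xi_3)$ is exactly $r$ restricted to $\{m=0\}$ and differs from $r$ by the multiple $m(P+P'-2\xi_0)$. Collecting terms yields $b_4=\tfrac12\,g[\xi_0,\xi_2,\xi_3]$ together with $c_4=\int_0^1 d'(P'+tm-\xi_0)\,dt-b_4(P+P'-2\xi_0)$, and the identity of the Proposition then holds as a genuine off–diagonal pointwise identity. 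The construction already singles out the smallest node $\xi_0$ and the middle node $\xi_2$, so it is adapted to the ordering \eqref{org}.

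For the size bounds I deliberately avoid the crude estimate $|g[\cdots]|\lesssim\sup|g''|$, which loses a factor $\alpha/\lambda$ because $d''$ is largest at the low frequency, and instead evaluate the divided differences recursively through function values over node gaps. Since $g[\xi_2,\xi_3]=0$ and $|\xi_0-\xi_3|\sim\mu$,
\[
b_4=\tfrac12\,\frac{g[\xi_0,\xi_2]}{\xi_0-\xi_3},\qquad g[\xi_0,\xi_2]=\frac{g(\xi_0)-g(\xi_2)}{\xi_0-\xi_2},
\]
and $|g(\xi_0)|,|g(\xi_2)|\lesssim a(\lambda)$ from $d\in S(a)$ together with the monotonicity of symbols in $S_\Lambda$, which in the relevant decaying regime gives $a(\mu)\lesssim a(\alpha)\lesssim a(\lambda)$. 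When $\lambda\ll\alpha$ one has $|\xi_0-\xi_2|\sim\alpha$ and the quotient bound gives $|g[\xi_0,\xi_2]|\lesssim a(\lambda)/\alpha$; when $\lambda\sim\alpha$ the two nodes are comparable and the mean–value form $|g[\xi_0,\xi_2]|\le\sup_{[\xi_0,\xi_2]}|g'|\lesssim a(\lambda)/\alpha$ does the same. Dividing by $|\xi_0-\xi_3|\sim\mu$ produces $|b_4|\lesssim a(\lambda)\alpha^{-1}\mu^{-1}$. The bound for $c_4$ follows in the same spirit: the integral term has argument of size $\mu$, hence is $\lesssim a(\mu)/\mu\lesssim a(\lambda)/\alpha$, while the correction $b_4(P+P'-2\xi_0)$ is $\lesssim a(\lambda)\alpha^{-1}\mu^{-1}\cdot\mu=a(\lambda)/\alpha$.

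The derivative bounds are the main technical point, and here the divided–difference formulation pays off through the identity $\partial_{\xi_i}f[x_0,\dots,x_n]=f[x_0,\dots,x_i,x_i,\dots,x_n]$: each differentiation repeats a node and, by the same recursive evaluation, contributes one extra inverse gap, namely $\lambda^{-1}$ for a derivative in $\xi_0$, $\alpha^{-1}$ for $\xi_2$, and $\mu^{-1}$ for $\xi_3$, while $b_4$ is independent of $\xi_1$ so the $\mu^{-\beta_1}$ factor is automatic. For instance $\partial_{\xi_0}b_4=\tfrac12 g[\xi_0,\xi_0,\xi_2,\xi_3]$, and evaluating through $g[\xi_0,\xi_0]=g'(\xi_0)$ with $|g'(\xi_0)|\lesssim a(\lambda)/\lambda$ reproduces $|\partial_{\xi_0}b_4|\lesssim a(\lambda)\lambda^{-1}\alpha^{-1}\mu^{-1}$; the mixed and higher derivatives are handled identically up to combinatorial constants. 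The one extra bookkeeping item is that $g=g_{P'}$ depends on $P'=\xi_2+\xi_3$, so derivatives in $\xi_2,\xi_3$ also fall on the parameter through $\partial_{P'}g(\xi)=d'(P'-\xi)$; this contributes only functions at scale $\mu$, whose divided differences cost $\mu^{-1}\le\alpha^{-1}$ per derivative and are therefore absorbed into the stated bounds. Differentiating the explicit formula for $c_4$, together with the bounds already obtained for $b_4$, yields the second line of \eqref{b4c4}. The delicate part throughout is precisely to organize every estimate as a value–over–gap divided–difference bound, since any appeal to $\sup|d^{(k)}|$ at the low frequency $\lambda$ would be off by powers of $\alpha/\lambda$.
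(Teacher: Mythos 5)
Your algebraic decomposition is correct, and it is in fact the same identity the paper starts from: your relation $2(\xi_0-\xi_2)(\xi_0-\xi_3)=r-m(P+P'-2\xi_0)$ is \eqref{eq_decomp}, and on $P_4$ your $b_4=\tfrac12\,g[\xi_0,\xi_2,\xi_3]$ coincides with the forced on-diagonal value. The genuine gap is in the estimates. Your $g$ carries the parameter $P'=\xi_2+\xi_3$, so off the diagonal your construction evaluates $d$ at $P'-\xi_0=\xi_2+\xi_3-\xi_0$, a point which is not pinned to any of the dyadic scales in \eqref{org} and can sit at the origin, where $d$ and its derivatives are largest. Concretely, take $d=a=(\Lambda^2+\xi^2)^{-1/4}$, $\Lambda\ll\lambda\ll\mu$, and the point $\xi_0=\lambda$, $\xi_3=\mu$, $\xi_2=\lambda-\mu$ (so $|\xi_2|\sim\alpha\sim\mu$), with $\xi_1\sim\mu$ arbitrary: this lies in the region \eqref{org}, but $P'-\xi_0=0$, so $g(\xi_0)=d(\lambda)+d(0)\approx a(\Lambda)$, and your own recursion gives $|b_4|=\tfrac12\,|g[\xi_0,\xi_2]|/|\xi_0-\xi_3|\approx a(\Lambda)\mu^{-2}$, exceeding the required $a(\lambda)\alpha^{-1}\mu^{-1}\approx a(\lambda)\mu^{-2}$ by the unbounded factor $a(\Lambda)/a(\lambda)=(\lambda/\Lambda)^{1/2}$. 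The $c_4$ bound fails at the same point, since the segment $P'+tm-\xi_0$ passes through the origin. Thus the asserted estimates ``$|g(\xi_0)|\lesssim a(\lambda)$'' and ``the integral term has argument of size $\mu$'' are false on \eqref{org}; they hold only near $P_4$, where $P'-\xi_0=\xi_1-m\sim\mu$. Since the entire point of the proposition is an off-diagonal extension whose bounds hold on the full product of dyadic regions (this is what licenses the later Fourier-series separation of variables), this is a failure of the construction itself, not merely of the exposition. The paper's construction avoids exactly this trap by pairing the differences as $d(\xi_0)-d(\xi_2)$ and $d(\xi_1)-d(\xi_3)$ (in the regimes $\lambda\ll\alpha$ and $\lambda\sim\alpha\ll\mu$), so that $d$ and its divided differences are only ever evaluated at the original frequencies, each at its own dyadic scale.

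There is a second, independent gap: you assert $|\xi_0-\xi_3|\sim\mu$ throughout, but in the regime $\lambda\sim\alpha\sim\mu$ (the paper's case (c), where all four frequencies are comparable) $\xi_0-\xi_3$ can vanish, e.g.\ $\xi_0=\xi_3$, $\xi_1=\xi_2$, so the recursion $b_4=\tfrac12\,g[\xi_0,\xi_2]/(\xi_0-\xi_3)$ and every estimate built on it degenerates; similarly, your mean-value bound $\sup_{[\xi_0,\xi_2]}|g'|\lesssim a(\lambda)/\alpha$ fails when $\xi_0$ and $\xi_2$ have opposite signs, since the interval then contains the origin where $|g'|$ can be of size $a(\Lambda)\Lambda^{-1}$. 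These particular defects are repairable within your framework: use the alternative recursion $g[\xi_0,\xi_2,\xi_3]=g[\xi_0,\xi_3]/(\xi_0-\xi_2)$ when $\xi_0$ is close to $\xi_3$; when all three nodes cluster, note they cluster near the symmetry point $P'/2$ of $g$, where $g'$ vanishes, so a local $\sup|g''|$ bound at scale $\mu$ suffices; and replace mean-value by value-over-gap when the nodes straddle $0$. But this case analysis \emph{is} the substance of the paper's proof, and it is precisely what is missing from yours; combined with the off-diagonal failure above, the proposal does not prove the stated bounds.
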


\begin{proof}
We first note that  we have the formula
\begin{equation}\label{eq_decomp}  
\xi_0^2+\xi_1^2 -\xi_2^2 -\xi_3^2 = 
2(\xi_0 - \xi_2)(\xi_0-\xi_3)- (\xi_0 + \xi_1 - \xi_2 - \xi_3)(\xi_0 - \xi_1-\xi_2- \xi_3).  
\end{equation} 
In particular we obtain the factorization 
\[ \xi_0^2+\xi_1^2 -\xi_2^2 -\xi_3^2 = 
2(\xi_0 - \xi_2)(\xi_0-\xi_3) \qquad \text{ on }P_4  
\]
along with all versions of it due to the symmetries. 
It suffices to construct $b_4$ and $c_4$ locally in dyadic regions,
and then sum up the results using an appropriate partition of unity.
We consider several cases:

(a) $\lambda \ll \alpha \leq \mu$. Then $\xi_1, \xi_3 \sim \mu$ 
and $|\xi_0-\xi_2| \sim \alpha$. 
Then the extension of $b_4$ is
defined using the formula
\[
b_4(\xi_0,\xi_1,\xi_2,\xi_3) =   \frac{d(\xi_0)+d(\xi_1)-d(\xi_2)-d(\xi_3)}
{2 (\xi_0-\xi_2)(\xi_0-\xi_3)}.
\]
Its size and regularity properties are straightforward since
$|\xi_0 - \xi_2| \approx \alpha$ and $|\xi_0 - \xi_4| \approx \mu$.
By  \eqref{eq_decomp} we obtain 
\[ 
\begin{split} 
c_4 =   \frac{  d(\xi_0)+ d(\xi_1)-d(\xi_2)-d(\xi_3)- b_4 (\xi_0^2+ \xi_1^2- \xi_2^2- \xi_3^2) }{ \xi_0+ \xi_1- \xi_2- \xi_3} 
= b_4( \xi_0- \xi_1-\xi_2-\xi_3). 
\end{split} 
\]
The bounds for $c_4$  are also obvious.

(b) $\lambda \approx \alpha \ll \mu$. Then $|\xi_0|, |\xi_2|\sim \alpha$ and 
$|\xi_1|, |\xi_3| \sim \mu$. 
We define  the extension of $b_4$  using the formula 
\[
b_4(\xi_0,\xi_1,\xi_2,\xi_3) =   \frac{d(\xi_0)-d(\xi_2)}
{2(\xi_0 - \xi_2)(\xi_0-\xi_3)} +  \frac{d(\xi_1)-d(\xi_3)}
{2(\xi_3 - \xi_1)(\xi_3-\xi_0)}
\]
and, as above,  
\[ 
\begin{split}
c_4 =   \frac{d(\xi_0)-d(\xi_2)}
{2(\xi_0 - \xi_2)(\xi_0-\xi_3)}(\xi_0 - \xi_1-\xi_2-\xi_3)
 + \frac{d(\xi_1)-d(\xi_3)}
{2(\xi_3 - \xi_1)(\xi_3-\xi_0)}(\xi_3-\xi_0-\xi_1-\xi_2)
.\end{split} 
\]
Again the estimates are immediate.

(c) $\lambda \approx \alpha \approx \mu$. 
We define the  extension of $b_4$ by
\[
\begin{split} 
b_4(\xi_0,\xi_1,\xi_2,\xi_3) = & \ 
 \frac{d(\xi_1)-d(\xi_3)}
{2(\xi_3-\xi_0)(\xi_3-\xi_1)}
+ \frac{d(\xi_0)-d(\xi_1+(\xi_0-\xi_3))}{2(\xi_3-\xi_0)(\xi_3-\xi_1)}
\\ = &\ \frac{q(\xi_1,\xi_3)}{2 (\xi_0-\xi_3)}
+    \frac{q(\xi_1+(\xi_0-\xi_3),\xi_1+(\xi_0-\xi_3))}{2 (\xi_1-\xi_3)}
\end{split} 
\]
where $q$ is the smooth function
\[ 
q(\xi,\eta) = \frac{d(\xi)-d(\eta)}{\xi-\eta}. 
\]
Then 
\[
c_4 = b_4(\xi_3-\xi_0 - \xi_1-\xi_2)
\]
and  the estimates follow immediately.  
\end{proof}

Using the above lemma, the contribution of $E_1$ to the energy is easy
to control:

\begin{lemma}
Assume that $a \in S_\Lambda$.  Then 
\begin{equation}
|E_1(u)| \lesssim E_0(u) \Vert u \Vert_{H^{-1/2}_\Lambda}^2  
\end{equation}
\label{e1bd}\end{lemma}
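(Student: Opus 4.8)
The plan is to treat $E_1(u)$ as a fixed-time quadrilinear form and to estimate it by $\|u\|_{H^a}^2\|u\|_{H^{-1/2}_\Lambda}^2$ through a dyadic decomposition of the four inputs. Writing $u=\sum_\lambda u_\lambda$ and inserting this into the definition of $E_1$, I obtain a sum of contributions indexed by the four dyadic frequencies of $\xi_0,\xi_1,\xi_2,\xi_3$. On $P_4$ the two largest of these are necessarily comparable, and using the symmetry of $b_4$ under $\xi_0\leftrightarrow\xi_1$, $\xi_2\leftrightarrow\xi_3$ and $(\xi_0,\xi_1)\leftrightarrow(\xi_2,\xi_3)$ I reduce each contribution to the single organization \eqref{org}, namely $\xi_0\sim\lambda\le\xi_2\sim\alpha\le\xi_1,\xi_3\sim\mu$. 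For this organization Proposition~\ref{pbbounds}, applied with $d=a$, provides both the size bound $|b_4|\lesssim a(\lambda)\alpha^{-1}\mu^{-1}$ and the scale-adapted derivative bounds \eqref{b4c4}; these in turn rest on the divided-difference factorization \eqref{eq_decomp} of the denominator in \eqref{b4}.

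Next I separate variables. The bounds \eqref{b4c4} say that, restricted to the box $\{\xi_0\sim\lambda,\ \xi_2\sim\alpha,\ \xi_1,\xi_3\sim\mu\}$, the symbol $b_4$ is smooth on the natural scales $\lambda,\alpha,\mu,\mu$. Expanding it in a Fourier series adapted to this box writes $b_4$ as a rapidly convergent sum of tensor products of characters, each character corresponding to a spatial translation and each coefficient inheriting the bound $a(\lambda)\alpha^{-1}\mu^{-1}$ together with rapid decay. Every resulting term is a physical-space integral $\int w_0\,w_1\,\overline{w_2\,w_3}\,dx$ of translates of $u_\lambda,u_\mu,u_\alpha,u_\mu$, to which I apply H\"older, placing the two lowest frequencies $u_\lambda,u_\alpha$ in $L^\infty$ and the two copies of $u_\mu$ in $L^2$. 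With Bernstein, $\|u_\lambda\|_{L^\infty}\lesssim\lambda^{1/2}\|u_\lambda\|_{L^2}$ and similarly for $u_\alpha$, and after summing the rapidly decaying coefficients I arrive at
\[
|E_1(u)| \lesssim \sum_{\lambda\le\alpha\le\mu} a(\lambda)\,\lambda^{1/2}\alpha^{-1/2}\mu^{-1}\,\|u_\lambda\|_{L^2}\|u_\alpha\|_{L^2}\|u_\mu\|_{L^2}^2 .
\]

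It remains to perform the dyadic summation, which is the crux of the argument and the step I expect to be the main obstacle. I assign the two low-frequency factors $u_\lambda,u_\alpha$ to the $H^a$ norm and the two high-frequency factors $u_\mu$ to the $H^{-1/2}_\Lambda$ norm, so that the right-hand side is to be compared with $\big(\sum_\lambda a(\lambda)\|u_\lambda\|_{L^2}^2\big)\big(\sum_\mu(\Lambda^2+\mu^2)^{-1/2}\|u_\mu\|_{L^2}^2\big)$. The monotonicity of $\xi\mapsto a(\xi)(\Lambda^2+\xi^2)^{1/2}$ from the definition of $S_\Lambda$, combined with the symbol regularity (i), controls the ratio of the summand to the target weights by an off-diagonal factor which carries a genuine power gain in $\lambda/\alpha$ for the weights of interest; the two $\mu$ factors are then summed freely against $\|u\|_{H^{-1/2}_\Lambda}^2$, and a Schur test (or Cauchy--Schwarz) in the remaining indices $\lambda\le\alpha$ closes the estimate. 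The delicate point is exactly this allocation of the two available weights among the four frequencies so that the frequency-gap gain survives the summation, and it is here that the homogeneity encoded in $S_\Lambda$ is used; at $s=-\tfrac14$, where all the spatial and temporal scales coincide, this bookkeeping is at its simplest, which is why the lemma is elementary at the endpoint.
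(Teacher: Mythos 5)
Your proposal follows the paper's own proof essentially step for step: the dyadic reduction to the configuration \eqref{org}, the use of Proposition~\ref{pbbounds} to bound $b_4$ by $a(\lambda)\alpha^{-1}\mu^{-1}$, separation of variables via a rapidly convergent Fourier series (translations in $x$), H\"older with Bernstein on the two low-frequency factors, and the final summation placing $u_\lambda,u_\alpha$ in $H^a$ and the two $u_\mu$'s in $H^{-\frac12}_\Lambda$. The approach and all key steps coincide with the paper's argument, so there is nothing further to add.
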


\begin{proof} 
  The proof is easier than the proof of the more essential result
  below. Nevertheless it introduces some useful techniques. We expand the
  quadrilinear expression in the dyadic frequency components. Then
 for  $\lambda \le \alpha \le \mu$ we consider the expression
\begin{equation}
\label{e1di}
 \left|\int b_4(\xi_0,\xi_1, \xi_2, \xi_0+\xi_1- \xi_2)\hat u_{\lambda} \hat u_{\mu} \overline{ \hat  u_\alpha \hat u_\mu} d\xi_0 d\xi_1 d\xi_2 \right| 
\end{equation}
where the ranges of the $\xi_j$'s are as in \eqref{org}. In this range
we can express $b_4$ in the form
\begin{equation} 
 b_4(\xi_0,\xi_1,\xi_2, \xi_3) =  a(\lambda)  \alpha^{-1}\mu^{-1}        \eta( \xi_0/\lambda ,\xi_1/\mu, \xi_2/\alpha, \xi_3/\mu). 
\end{equation} 
where $\eta$ is compactly supported and smooth with bounds independent
of $\lambda, \alpha$ and $\mu$.

Due to \eqref{b4c4} we can expand $\eta$ into a rapidly convergent
Fourier series.  Since complex exponentials are products of complex
exponentials in the coordinates, and since multiplication by a complex
exponential of the Fourier transform corresponds to a translation in
$x$ space we can separate variables and reduce the problem to the case
when $b_4$ is simply replaced by $a(\lambda) \lambda^{-1}
\alpha^{-1}$. Then using Bernstein to bound the low frequency factors
in $L^\infty$ we obtain for the expression in \eqref{e1di}
\begin{equation*} 
\begin{split}
  \text{\eqref{e1di}} =  a(\lambda) \alpha^{-1} \mu^{-1} \int
  u_\lambda u_\mu \overline{u_\alpha u_\mu} dx  \lesssim 
  a(\lambda) \lambda^{\frac12} \alpha^{-\frac12} \mu^{-1} \Vert
  u_\lambda \Vert_{L^2} \Vert u_\alpha \Vert_{L^2} \Vert u_\mu
  \Vert_{L^2} \Vert u_{\mu} \Vert_{L^2}
\end{split} 
\end{equation*}
We estimate the high frequencies in $H^{-1/2}_\Lambda$ and sum with
respect to $\lambda$, $\alpha$ and $\mu$.
\end{proof}

\noindent The more difficult result we need to prove is
\begin{lemma} \label{r6} Assume that $a \in S_\Lambda$ is as
  above. Then we have
\begin{equation}
\left|\int_0^1 R_6(u) dxdt\right| \lesssim \| u\|_{X^a \cap X^a_{le}}^2  
\| u\|_{X_\Lambda^{-1/4}  \cap X^{-1/4}_{\Lambda,le}}^4 
\end{equation}
\end{lemma}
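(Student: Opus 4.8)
The plan is to treat $\int_0^1 R_6\,dt$ as a time-integrated sextilinear form and to reduce it, one frequency block at a time, to the trilinear and bilinear estimates already established. First I would expand all six factors into Littlewood--Paley pieces: the slot $\xi_0$ carries the cubic term $P_\lambda(|u|^2u)$ and the slots $\xi_1,\xi_2,\xi_3$ carry linear pieces. Using the symmetries $\xi_0\leftrightarrow\xi_1$ and $\xi_2\leftrightarrow\xi_3$ of $b_4$ I would reduce to the frequency ordering of \eqref{org}, namely $\lambda\le\alpha\le\mu$ with $\xi_0\sim\lambda$, $\xi_2\sim\alpha$, $\xi_1,\xi_3\sim\mu$ (the alternative case, where the cubic factor lands in a top slot after relabelling, is treated the same way but changes the output frequency used below). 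On each dyadic region the bound \eqref{b4c4} from Proposition~\ref{pbbounds} shows that $b_4$ is a smooth symbol of size $a(\lambda)\alpha^{-1}\mu^{-1}$; exactly as in the proof of Lemma~\ref{e1bd}, I would expand it in a rapidly convergent Fourier series in the rescaled variables to separate variables, so that $b_4$ is replaced by the constant $a(\lambda)\alpha^{-1}\mu^{-1}$ at the cost of harmless translations of the factors in $x$. This reduces everything to estimating, for each dyadic triple and then summing over $\lambda\le\alpha\le\mu$,
\[
a(\lambda)\,\alpha^{-1}\mu^{-1}\,\Big|\int_0^1\!\int_{\R} P_\lambda(|u|^2u)\; u_\mu\,\overline{u_\alpha}\,\overline{u_\mu}\,dx\,dt\Big|.
\]

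The key observation for the model integral is that the triple product only enters through its frequency-$\sim\lambda$ part, since it is paired against $P_\lambda(|u|^2u)$; hence I may insert a wider projector $\tilde P_\lambda$ on it for free. I would then split $[0,1]$ into subintervals $J$ of the top scale $|J|=\mu^{4s}$ and apply Cauchy--Schwarz in $(t,x)$ on each $J$, so the integral is dominated by $\sum_J\|P_\lambda(|u|^2u)\|_{L^2(J\times\R)}\,\|\tilde P_\lambda(u_\mu\overline{u_\alpha}\,\overline{u_\mu})\|_{L^2(J\times\R)}$. Now \emph{both} factors are trilinear expressions with a frequency-localized output, so the corollary \eqref{tril2} applies to each: for the second factor it gives $\|\tilde P_\lambda(u_\mu\overline{u_\alpha}\,\overline{u_\mu})\|_{L^2}\lesssim \lambda^{\frac12}\mu^{-\frac12}\|u_\mu\|_{U^2_\Delta}^2\|u_\alpha\|_{U^2_\Delta}$ (legitimate since $\lambda\le\mu$), and similarly for $\|P_\lambda(|u|^2u)\|_{L^2}$ after a dyadic expansion of $|u|^2u$. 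Passing from $U^2_\Delta$ to the $X_\nu$ norms via \eqref{em}, I obtain a product of six $X_\nu$ norms times an explicit power of the frequencies. Crucially, just as in Case~2(a) of Proposition~\ref{pnonlin}, the two top-frequency ($\mu$) factors should be measured in the local energy norm $X^{-\frac14}_{\Lambda,le}$, whose built-in square-summation over the subintervals $J$ makes the $\sum_J$ lossless; in the nonresonant subcases I would instead exploit the high-modulation gain \eqref{em2} on whichever factor is forced to high modulation, exactly mirroring the modulation analysis of Case~2 of Proposition~\ref{pnonlin}.

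It remains to distribute the six $X_\nu$ norms and sum. The weight $a(\lambda)$ coming from $b_4$ is precisely what is needed to charge the two lowest-frequency factors — in the main configuration, two of the three pieces inside the cubic term — to $X^a\cap X^a_{le}$, since on the squared norm $X^a$ carries the weight $a$ to the first power; here I would use the monotonicity of $a(\xi)(\Lambda^2+\xi^2)^{1/2}$ from the class $S_\Lambda$ to compare $a(\lambda)$ with the weight at the actual frequencies of these factors when the cubic output is low only through internal cancellation. The remaining four factors are charged to $X^{-\frac14}_\Lambda\cap X^{-\frac14}_{\Lambda,le}$, and a final Cauchy--Schwarz in the dyadic parameters $(\lambda,\alpha,\mu)$ and the internal frequencies yields the asserted bound, provided every resulting dyadic series converges.

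Checking that convergence is the main obstacle. The estimate sits exactly at the threshold $s=-\tfrac14$, so the dyadic sums are borderline and can only be closed using the $\mu^{-1/2}$ (or $\alpha^{-1/2}$) gains from \eqref{tril2}/\eqref{biuu} together with the local energy norms, at worst up to logarithmic losses of the type $\log(\mu/\lambda)$ already encountered in Proposition~\ref{pnonlin}. The single most delicate point is the resonant interaction in which the two high factors share the frequency $\mu$ and offer no spectral separation: this is resolved precisely by localizing the triple product to output frequency $\sim\lambda$ and invoking \eqref{tril2}, which uses $\lambda\lesssim\mu$ in an essential way. One must also take care, throughout the case analysis, to place the non-solution factor $P_\lambda(|u|^2u)$ only in an $L^2$ slot where \eqref{tril2} is available, never in a slot that would require a bilinear estimate for a genuine Schr\"odinger solution.
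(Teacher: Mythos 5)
There is a genuine gap, and it sits in your very first reduction: the block--diagonal Cauchy--Schwarz in $L^2(J\times\R)$ between $P_\lambda(|u|^2u)$ and $\tilde P_\lambda(u_\mu\bar u_\alpha\bar u_\mu)$. The paper deliberately avoids this: it expands $|u|^2u$ into its three dyadic factors and estimates $R_6$ as a genuine sextilinear form, keeping the projector $P_{\lambda_0}$ only when $\lambda_0$ lies below all internal frequencies of the cubic term and otherwise discarding it (by writing its multiplier as a Fourier integral and absorbing the exponentials into the other factors), precisely because keeping it ``prevents us from using bilinear $L^2$ estimates for factors located across $P_{\lambda_0}$''. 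Your reduction makes that separation permanent, and it fails on a standard configuration: let the three linear factors all have frequency $\sim\lambda$, comparable to the output frequency (so $b_4\sim a(\lambda)\lambda^{-2}$), while the three factors inside $|u|^2u$ all have frequency $\sigma\gg\lambda$ --- the high$\times$high$\times$high$\to$low part of the nonlinearity tested against low frequencies, i.e.\ exactly the paper's Case A. After your Cauchy--Schwarz, the cubic block is estimated by \eqref{tril2} with constant $\lambda^{1/2}\sigma^{-1/2}$, and this is sharp, because \eqref{tril2} is by construction an estimate against an \emph{arbitrary} $L^2$ test function --- which is all that the other block has become. The other block, $\tilde P_\lambda(u_\lambda\bar u_\lambda u_\lambda)$, has no frequency separation at all, so only the $L^6$-type bound \eqref{ineq:A2} applies and its constant is $O(1)$. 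Your total gain is therefore $\lambda^{1/2}\sigma^{-1/2}$, whereas the paper obtains $C_L\lesssim\lambda\sigma^{-1}$ for this configuration (\eqref{largel0} with $\mu_1=\mu_2=\mu_3\sim\lambda$, $\mu_4=\mu_5=\mu_6\sim\sigma$); the missing factor $(\lambda/\sigma)^{1/2}$ can only come from estimates that pair low-frequency factors with high-frequency ones \emph{across} the two groups --- bilinear $L^2$ bounds \eqref{biuu}, Bernstein applied only to the cheap frequency-$\lambda$ factors, and the nonresonance of the interaction (the resonance function here is $\sim\sigma^2$, forcing some factor to modulation $\gtrsim\sigma^2$) --- none of which survive your decoupling. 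Since the paper's own constant in this configuration is only $(\lambda/\sigma)^{1/4}$ and, as the paper stresses, at $s=-\frac14$ the estimate has zero homogeneity with no room for losses, your shortfall of $(\sigma/\lambda)^{1/2}$ leaves a constant of size at least $(\sigma/\lambda)^{1/4}$, and the dyadic sum over $\sigma$ diverges.

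The deficit is compounded, not cured, by the space-time bookkeeping. Once the two blocks are decoupled, each block must internally supply both its spatial $l^2$ summability and its temporal $l^2$ summability; but $\sum_k\sup_{I}\|\chi_k u\|^2_{U^2_\Delta[I]}$ is controlled by neither the energy norm (which gives $\sup_I\sum_k$) nor the local energy norm (which gives $\sup_k\sum_I$), so one factor in \emph{each} block must pay the interval-counting loss (its frequency to the power $\frac12$) --- in the configuration above, a frequency-$\sigma$ factor, bringing your constant to $(\sigma/\lambda)^{3/4}$. The paper's transition \eqref{jest}$\Rightarrow$\eqref{iest} escapes this only because the time-interval summation is performed first, for spatial positions linked across all six factors, so that the counting loss $(\mu_1\mu_2)^{1/2}$ is charged to the two \emph{lowest} frequencies among all six --- an assignment your framework cannot make when both blocks' frequency ranges are homogeneous. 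Two further inaccuracies point the same way: \eqref{tril2} is not applicable to $\|P_\lambda(|u|^2u)\|_{L^2}$ whenever the largest internal frequency is $\sim\lambda$ and the median one is $\ll\lambda$ (its hypothesis $\lambda_0\lesssim\lambda_2$ fails; one only has the dual forms of \eqref{ineq:B*2}, \eqref{ineq:C**2}, and no gain at all in the fully resonant case), and logarithmic losses are harmless in Proposition~\ref{pnonlin} only because they come attached to off-diagonal power gains --- here a bare logarithm in a non-decaying direction already breaks the summation. Repairing these points forces one to abandon the block Cauchy--Schwarz and estimate all six factors simultaneously, which is the paper's proof.
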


\begin{proof}
  We consider a full dyadic decomposition of all factors and express
  the above integral in the Fourier space as a sum of terms of the
  form

\[
K = \int_{0}^1 \int_{P_6} b_4(\xi_1,\xi_2,\xi_3,\xi_0) \hat
u_{\lambda_1}(\xi_1) \overline{\hat u_{\lambda_2}(\xi_2)} \hat
u_{\lambda_3}(\xi_3) P_{\lambda_0}(\overline{ \hat
  u_{\lambda_4}(\xi_4)} \hat u_{\lambda_5}(\xi_5) \overline{ \hat
  u_{\lambda_6}(\xi_6)}) d\xi dt
\]
where 
\[ 
P_6 = \{ \xi_1+\xi_3+\xi_5 = \xi_2 + \xi_4 +\xi_6 \}, \qquad \xi_0 =
\xi_1-\xi_2+\xi_3
\]
For each of the dyadic factors $u_{\lambda_j}$ we will only use the $U^2_\Delta$ 
norm, which is controlled as in \eqref{em}.

As in the previous lemma, since $b_4$ is smooth in each variable
on the corresponding dyadic scale we can expand it into a rapidly
convergent Fourier series.  This allows us to separate variables and reduce 
the problem to the case when $b_4$ has separated variables,
\[
b_4(\xi_1, \xi_2, \xi_3, \xi_0) =  a(\lambda) \alpha^{-1} \mu^{-1} 
\chi^1(\xi_1) \chi^2(\xi_2) \chi^3(\xi_3) \chi^0(\xi_0) 
\]
where $\chi_i$'s are unit size bumps which are smooth on the
respective dyadic scales and
$\{\lambda_1,\lambda_2,\lambda_3,\lambda_0\} = \{
\lambda,\alpha,\mu,\mu\}$. By definition $\chi^i(D)$ are bounded in
the $X_\lambda$ spaces, therefore we can discard $\chi^1$, $\chi^2$
and $\chi^3$ and incorporate $\chi^0$ into $P_{\lambda_0}$.


Similarly to above we may expand the Fourier multiplier $\eta$ for
$P_{\lambda_0}$ into a Fourier integral. We obtain for a Schwartz
function $\rho$
\[ 
\eta(\xi_0) =  \int \rho(y) e^{i \xi_0 y/\lambda_0} dy
\]
Since in the domain of integration for $K$ we have $\xi_0 =
\xi_1-\xi_2+\xi_3$, we can separate the exponential into three factors
which can be harmlessly absorbed into $u_{\lambda_1}$, $u_{\lambda_2}$
and $u_{\lambda_3}$. Thus we may as above simply drop $P_{\lambda_0}$
whenever we wish to do so.  We discard $P_{\lambda_0}$ if $\lambda_0$
is large. On the other hand, if it is smaller than $\lambda_4,
\lambda_5$ and $\lambda_6$ then we keep it to get better estimates.
The disadvantage in that case is that $P_0$ prevents us from using
bilinear $L^2$ estimates for factors located across $P_{\lambda_0}$.
To summarize, we have reduced the problem to the case when $K$ has the
form
\[
K = a(\lambda) \alpha^{-1} \mu^{-1}  \int_{0}^1 \int_{\R} 
u_{\lambda_1} \overline{u_{\lambda_2}} 
u_{\lambda_3} P_{\lambda_0}(\overline{ 
  u_{\lambda_4}}  u_{\lambda_5} \overline{ 
  u_{\lambda_6}}) dx dt
\]
where we have the additional freedom to discard $P_{\lambda_0}$ as
needed.  The placement of the complex conjugates is irrelevant here
therefore we may always assume without any restriction in generality that 
\[
\lambda_1 \le \lambda_2 \le \lambda_3, \qquad \lambda_4 \le \lambda_5 \le
\lambda_6.
\]
It is also convenient to reorganize the indices in an increasing fashion
\[
\{  \lambda_1, \lambda_2,\lambda_3, \lambda_4, \lambda_5,\lambda_6\}
= \{  \mu_1, \mu_2,\mu_3, \mu_4, \mu_5,\mu_6\}, \qquad \mu_5 \sim  \mu_6
\]
We also recall that $\lambda$, $\alpha$ and $\mu$ are given by the increasing 
rearrangement 
\[
\{  \lambda_0, \lambda_1, \lambda_2,\lambda_3\} = \{ \lambda,\alpha,\mu,\mu\}
\] 

The $X^{-\frac14}_\Lambda$ norms involve space and time localizations. We will
disregard those at first and consider the simpler question of estimating
the integral
\[
L =  \int_{\R^2} 
u_{\lambda_1} \overline{ u_{\lambda_2}} 
u_{\lambda_3} P_{\lambda_0}(\overline{ 
  u_{\lambda_4}}  u_{\lambda_5} \overline{ 
  u_{\lambda_6}}) dx dt
\]
in terms of the $U^2_\Delta$ norm of each factor,
\begin{equation}\label{jest}
|L| \lesssim C_L \prod_{j=1}^6 \|u_{\lambda_j}\|_{U^2_\Delta},
\qquad C_L = 
C_L(\lambda_0,\lambda_1,\lambda_2,\lambda_3,\lambda_4,\lambda_5,\lambda_6).
\end{equation}
In all cases $C$ will have a polynomial dependence on the $\lambda$'s
and a zero order homogeneity. 
Before we set to the task of estimating $C$ in all the cases,
we consider the simpler question of the transition from the estimate
for $L$ to the estimate for $K$. Precisely, we claim that \eqref{jest}
implies that 
\begin{equation}\label{iest}
|K| \lesssim C_K \prod_{j=1}^2 \|u_{\mu_j}\|_{X^a}
\prod_{j=3}^6 \|u_{\mu_j}\|_{X^{-\frac14}_\Lambda \cap X^{-\frac14}_{\Lambda,le}},
\ \  C_K = \frac{a(\lambda)}{\sqrt{a(\mu_1)a(\mu_2)}} 
\frac{1}{\alpha \mu} (\mu_3 \mu_4 \mu_5 \mu_6)^{\frac14} (\mu_1 \mu_2)^{\frac12} 
 C_L
\end{equation}
Compared to $C_L$, the constant $C_K$ contains the additional trivial
frequency factors coming from the Sobolev regularity, plus the more
interesting factor $(\mu_1 \mu_2)^\frac12$ coming from the time
interval summation. For $C_K$ we want to have $C_K \leq 1$, plus some
additional off-diagonal decay to allow for the summation with respect
to all $\lambda_j$'s.  We remark that since $s = -\frac14$, $C_K$ has
homogeneity zero if $a$ is homogeneous, therefore we do not have room
for any losses.

We now prove that \eqref{jest} implies \eqref{iest}. For convenience
we simply omit the prefactor $a(\lambda)/(\alpha \mu)$ in $K$, which
plays no role here.  We decompose each factor $u_{\lambda_j}$ in space
on the unit scale and in time according to the $\delta t_{\lambda_j} =
\lambda_j^{-1}$ scale, while preserving the frequency localization:
\[
u_{\lambda_j} = \sum_{|I_j|= \lambda_j^{-1}} \sum_{k_j \in \Z} u_{\lambda_j,k_j}^{I_j},
\qquad 
u_{\lambda_j,k_j}^{I_j}=
\chi_{I_j}\tilde P_{\lambda_j}
(\chi_{k_j} u_{\lambda_j})
\]
Then $K$ is decomposed into
\[
K = \sum_{I_j \text{nested}}^{|I_j|= \lambda_j^{-1}} \sum_{k_j \in \Z}
K((I_j)_{j=1,\cdots,6},(k_j)_{j=1,\cdots,6}),
\]
where
\[
K((I_j),(k_j)) = \int_{0}^1 \int_{\R} 
u_{\lambda_1,k_1}^{I_1} \overline{ u_{\lambda_2,k_2}^{I_2}} 
u_{\lambda_3,k_3}^{I_3} P_{\lambda_0}(\overline{ 
  u_{\lambda_4,k_4}^{I_4}}  u_{\lambda_5,k_5}^{I_5} \overline{ 
  u_{\lambda_6,k_6}^{I_6}}) dx dt
\]
For these components we claim that we have
\begin{equation}\label{Jik}
|K((I_j),(k_j))| \lesssim (1+ \max |k_i-k_j|)^{-N} 
C_L \prod_{j=1}^6 \|\chi_{k_j} u_{\lambda_j} \|_{U^2[I_j]}
\end{equation}
If $\max |k_i-k_j| \lesssim 1$ then this follows directly from \eqref{jest}.
Otherwise we further decompose $K((I_j),(k_j))$
as 
\[
K((I_j),(k_j)) = \sum_k   \int_{0}^1 \int_{\R} \chi_{k}
u_{\lambda_1,k_1}^{I_1} \overline{ u_{\lambda_2,k_2}^{I_2}} 
u_{\lambda_3,k_3}^{I_3} P_{\lambda_0}(\overline{ 
  u_{\lambda_4,k_4}^{I_4}}  u_{\lambda_5,k_5}^{I_5} \overline{ 
  u_{\lambda_6,k_6}^{I_6}}) dx dt
\]
For each $k \in \Z$ we can find some $j$ so that $|k-k_j| \gtrsim
\max\{k_i-k_j\}$.  To keep the notations simple let us take
$j=1$. Then we apply the bound \eqref{tails} to $\chi_{k}
u_{\lambda_1,k_1}^{I_1}$; this shows that
\[
\| P_{\mu} \chi_{k} u_{\lambda_1,k_1}^{I_1}\|_{U^2_\Delta} \lesssim 
\lambda_1^{-N} \mu^{-N} (\max\{k_i-k_j\}+ |k-k_1|)^{-N}\|\chi_{k_1} u_{\lambda_1} \|_{U^2_\Delta[I_1]}
\]
Hence \eqref{Jik} follows by applying \eqref{jest} to each of the terms in the 
above sum after a summation with respect to $\mu$ and $k$.

We obtain the bound for $K$ by summing \eqref{Jik} over the nested
intervals $I_j$ and $k_j$. We switch the frequencies to the $\mu_j$
notation. Using the fact that $\mu_5 = \mu_6$ and therefore $I_5=I_6$,
by Cauchy-Schwarz we obtain
\[
\begin{split}
|K| \lesssim & \ \sum_{\{k_i\}} (1+ \max |k_i-k_m|)^{-N} \prod_{j=1}^4 \max_{I_j} 
\|\chi_{k_j} u_{\mu_j} \|_{U^2_\Delta [I_j]} 
\prod_{j=5}^6 \left(\sum_{I_j} 
\|\chi_{k_j} u_{\mu_j} \|_{U^2_\Delta [I_j]}^2 \right)^\frac12
\\ 
  \lesssim & \ \prod_{j=1}^2   \left( \sum_{k} \max_{I_j} \|\chi_{k} u_{\mu_j} \|_{U^2_\Delta [I_j]}^2 \right)^{\!\!\! \frac12}
\!\! \prod_{j=3}^4 \sup_{k} \max_{I_j} \|\chi_{k} u_{\mu_j} \|_{U^2_\Delta [I_j]}
\prod_{j=5}^6 \sup_{k}  \left(\!\!\sum_{I_j} 
\|\chi_{k} u_{\mu_j} \|_{U^2_\Delta [I_j]}^2 \right)^{\!\!\! \frac12}
\\ 
  \lesssim & \ \prod_{j=1}^2   
\left( \sum_{I_j} \sum_{k}  \|\chi_{k} u_{\mu_j} \|_{U^2_\Delta [I_j]}^2 \right)^\frac12
\prod_{j=3}^6 \sup_{k}  \left(\sum_{I_j} 
\|\chi_{k} u_{\mu_j} \|_{U^2_\Delta [I_j]}^2 \right)^\frac12
\\
 =  & \   (a(\mu_1)a(\mu_2))^{-\frac12} (\mu_3 \mu_4 \mu_5\mu_6)^\frac14
(\mu_1\mu_2)^{\frac12}
 \prod_{j=1}^2   
\left( \sum_{I_j}  \| u_{\mu_j} \|_{X^a}^2 \right)^\frac12
\prod_{j=3}^6  \| u_{\mu_j} \|_{X^{-\frac14}_{le}} 
\end{split}
\]
Thus \eqref{iest} is proved. It remains to estimate
the constant $C_L$ in \eqref{jest}.  We need to distinguish two cases:

{\bf Case A.} $\lambda_0 \geq \mu_2$.  In this case we must have 
$\lambda \geq \mu_1$, $\alpha \geq \mu_2$ and $\mu \geq \mu_3$.
We claim the following bound
\begin{equation} \label{largel0}
C_L \lesssim (\mu_1 \mu_3)^{\frac12} \mu_6^{-1} 
\end{equation}
This is not optimal in many cases, but it suffices for our purposes.
In particular by \eqref{iest} it implies that
\[
C_K \lesssim  \frac{(\mu_1 \mu_3)^{\frac12}}{ \mu_6}  \frac{a(\mu_1)} {\mu_2 \mu_3}
(\mu_1 \mu_2)^\frac12 (a(\mu_1)a(\mu_2))^{-\frac12}
\mu_3^\frac14 \mu_4^\frac14 \mu_6^\frac12
\leq   \frac{\mu_1 \sqrt{a(\mu_1)}}{\mu_2 \sqrt{a(\mu_2)}} 
\frac{\mu_2^\frac14}{\mu_6^\frac14} \leq 1
\]
Note that we have rapid decay off the ``diagonal''
$\lambda_0 = \mu_1 = \mu_2=\mu_3 = \mu_4 = \mu_5 = \mu_6$,
which suffices for the dyadic summation.

To prove \eqref{largel0} we drop $P_{\lambda_0}$ and we consider three subcases:

{\bf Case A1.} $\mu_3 \ll \mu_6$. Then we can use two bilinear $L^2$ 
and two Bernstein to obtain the stronger bound
\[
C_L \lesssim (\mu_1 \mu_2)^{\frac12} \mu_6^{-1} 
\]

{\bf Case A2.} $\mu_1 \ll \mu_3 \sim \mu_6$. Then we we can only use a single
bilinear $L^2$ bound, one Bernstein and three $L^6$ bounds to get
\[
C_L \lesssim \mu_1^{\frac12} \mu_6^{-\frac12} 
\]
which still implies \eqref{largel0}.

{\bf Case A3.} $\mu_1 \sim \mu_6$. Then we simply use six $L^6$ bounds
to show that $C_K \lesssim 1$.

{\bf Case B.} $\lambda_0 \ll \mu_2$.  In this case we claim that 
the following bound holds:
\begin{equation} \label{smalll0}
C_L \lesssim \lambda_0  \mu_4^{-\frac12} \mu_6^{-\frac12} 
\end{equation}
To see that this suffices we consider two cases. If $\lambda_0 \ll \mu_1$
then we have $\lambda = \lambda_0$, $\alpha \geq \mu_1$ and $\mu \geq \mu_3$.
Then by \eqref{iest} we obtain
\[
C_K \lesssim  \frac{\lambda_0}{ \mu_4^{\frac12} \mu_6^{\frac12} }  \frac{a(\lambda_0)} {\mu_1 \mu_3}
(\mu_1 \mu_2)^\frac12 (a(\mu_1)a(\mu_2))^{-\frac12}
\mu_3^\frac14 \mu_4^\frac14 \mu_6^\frac12
\leq   \frac{\lambda_0 a(\lambda_0)}  
{\mu_1 a(\mu_1)}
 \frac{ \sqrt{\mu_1a(\mu_1)}}{ \sqrt{\mu_2 a(\mu_2)}} 
\frac{\mu_2^\frac14}{\mu_4^\frac14} \leq 1
\]
 On the other hand if $ \mu_1 \leq \lambda_0 \ll \mu_2$ then
we have $\lambda \geq \mu_1$, $\alpha \geq \lambda_0$ and $\mu \geq
\mu_3$. Then by \eqref{iest} we obtain
\[
C_K \lesssim  \frac{\lambda_0}{ \mu_4^{\frac12} \mu_6^{\frac12} }  \frac{a(\mu_1)} {\lambda_0 \mu_3}
(\mu_1 \mu_2)^\frac12 (a(\mu_1)a(\mu_2))^{-\frac12}
\mu_3^\frac14 \mu_4^\frac14 \mu_6^\frac12
\leq 
 \frac{ \sqrt{\mu_1a(\mu_1)}}{ \sqrt{\mu_2 a(\mu_2)}} 
\frac{\mu_2^\frac14}{\mu_4^\frac14} \leq 1
\]
In both cases we have decay off the expanded diagonal $\lambda_0 =
\mu_1 = \mu_2=\mu_3 = \mu_4$, $\mu_5 = \mu_6$, which still suffices
for the dyadic summation.

It remains to prove the bound \eqref{smalll0}. If $\lambda_0 \ll
\mu_2$ then we must have $\lambda_0 \ll \lambda_2= \lambda_3$ and
$\lambda_0 \ll \lambda_5=\lambda_6$. By symmetry we can assume that 
$\lambda_6 = \mu_6$. Then we have two cases to consider:

{\bf Case B1.} $\lambda_{3} \gtrsim \mu_4$. Applying twice \eqref{tril2}
we have 
\[
\|P_{\lambda_0} (u_{\lambda_1}\bar u_{\lambda_2}u_{\lambda_3})\|_{L^2} \lesssim 
\lambda_0^\frac12 \lambda_3^{-\frac12} \prod_{j=1^3}
 \|u_{\lambda_j}\|_{U^2_\Delta}, \qquad \|P_{\lambda_0} (u_{\lambda_4}\bar u_{\lambda_5}u_{\lambda_6})\|_{L^2} \lesssim 
\lambda_0^\frac12 \lambda_6^{-\frac12} \prod_{j=4^6}
 \|u_{\lambda_j}\|_{U^2_\Delta}
\]
which imply  \eqref{smalll0}.

{\bf Case B2.}$\lambda_{3} \ll \mu_4$. Then the frequencies must be ordered as
follows:
\[
\lambda_1 \lesssim \lambda_2 = \lambda_3 \ll \lambda_4 \lesssim
\lambda_5 = \lambda_6
\]
The key observation here is that, regardless of the presence of $P_{\lambda_0}$,
the multilinear interaction in $L$ is nonresonant, i.e. at least one 
of the factors must have high modulation $\gtrsim \lambda_4 \lambda_6$.
This is similar to the proof of \eqref{ineq:C}. For the following argument 
it does not matter which is the high factor modulation factor. To fix the
notations we assume this is the $\lambda_4$ factor; this is actually 
the worst case. Then we write
\[
\begin{split}
|L| \lesssim & \ 
\lambda_0 \| u_{\lambda_1} u_{\lambda_2} u_{\lambda_3}\|_{L^4 L^1}
\| Q_{> \lambda_4 \lambda_6} 
u_{\lambda_4} u_{\lambda_5} u_{\lambda_6}\|_{L^\frac{4}3 L^1}
\\
\lesssim & \ 
\lambda_0 \| u_{\lambda_1}\|_{L^\infty L^2} 
\| Q_{> \lambda_4 \lambda_6} 
u_{\lambda_4}\|_{L^2} \prod_{j=2,3,5,6} \|u_j\|_{L^8 L^4}
\\
\lesssim & \ 
\lambda_0 (\lambda_4 \lambda_6)^{-\frac12} \prod_{j=1}^6 \|u_j\|_{U^2_\Delta}
\end{split}
\]
This gives \eqref{smalll0} in this case, and concludes the proof of
the lemma.

\end{proof}

\section{Local energy decay}

In this section we consider the weighted local energy decay estimates
for \eqref{nls}. Our main goal is to prove Proposition~\ref{p:le},
which is the local energy counterpart of Proposition~\ref{p:en}
in the previous section. Proposition~\ref{p:le}, together with 
 Proposition~\ref{p:en}, will be used in the last section to derive the 
second part of Proposition~\ref{penergy}, namely the bound \eqref{le-nonlin}.
To keep the argument as simple as possible, in this section 
we only consider the extreme case $s = -\frac14$. The benefit 
of doing this is that at $s = \frac14$ we can work with the same
unit spatial scale for all frequencies.

Let $\phi$ be an odd smooth function whose derivative has the form
$\phi' = \psi^2$ where $\psi$ is positive,
with rapidly decaying and with Fourier transform supported in $[-1,1]$. Let
$a$ be as in the previous section.  We define an odd monotone smooth
function $\tilde a \in S_\Lambda(a)$ by
\[ 
\tilde a = \left\{ \begin{array}{ll} a(\xi) & \text{ if } \xi>\Lambda \\
                                    -a(\xi) & \text{ if } \xi < -\Lambda \\
\Lambda^{-1} \xi a(\xi) & \text{ if } |\xi| < \Lambda/2
\end{array} 
\right. 
\]
and consider the indefinite quadratic form 
\[ 
\tilde E_0(u) = \frac12   \int (\phi \tilde a(D)+ \tilde a(D) \phi) u \bar u dx. \]
A small modification of the calculation of the previous section 
gives
\[ 
\frac{d}{dt} \tilde E_0(u) =   \tilde R_2(u) \pm \tilde R_4(u) 
\]
where 
\[ 
\begin{split} 
\tilde R_2(u) = &  i ( \langle (\phi \tilde a(D)+ \tilde a(D) \phi)  u_{xx}, u \rangle - 
\langle (\phi \tilde a(D)+\tilde a(D) \phi)  u, u_{xx}  \rangle
\\ =  & 
  \langle (\phi' \tilde a(D)+ \tilde a(D) \phi')  D  u , u  \rangle
   +   \langle (\phi' \tilde a(D)+ \tilde a(D) \phi')    u ,D  u  \rangle
\end{split} 
\]
and
\[ 
\tilde R_4(u) =  2 \Re
\langle i (\tilde a(D) \phi+ \phi \tilde a(D))u, |u|^2 u\rangle  
\]
The term $\tilde R_2$, which was zero in the computation of the
previous section, has a positive principal symbol and will be used 
to measure the local energy.

 We now turn our attention to the quadrilinear form $\tilde R_4$.
In the Fourier space we represent this term in the form
\[
\begin{split}
\tilde R_4(u) = &\ \int_{\R} \phi(x) e^{i x \xi} \int_{P_\xi} 
i ( \tilde a(\xi_0-\xi) + 
\tilde a(\xi_0))  \hat u (\xi_0)  \hat u (\xi_1)
\overline{ \hat u (\xi_2) \hat u(\xi_3)}  d\xi_i d\xi dx 
\\ = &\ \int_{\R} \phi(x) e^{i x \xi} \int_{P_\xi} 
i \tilde a_4 (\xi_0,\xi_1,\xi_2,\xi_3)  \hat u (\xi_0)  {\hat u} (\xi_1)
\overline{ \hat u (\xi_2)     \hat u(\xi_3) } d\xi_i d\xi  dx
\end{split} 
\]
where 
\[
P_\xi = \{ \xi_0 + \xi_1 - \xi_2 - \xi_3 = \xi \}
\]
and the symbol $\tilde a_4$ is obtained by symmetrizing the symbol
$\tilde a(\xi_0-\xi) + \tilde a(\xi_0)$. We can view this as a
function of $\xi_0$ with a smooth dependence on the parameter $\xi$,
which is invariant with respect to the symmetrization. Here we only
need uniformity with respect to $\xi$ in a compact set $[-1,1]$.
Hence we can apply Lemma~\ref{pbbounds}, keeping $\xi$ as a parameter,
in order to represent the symbol $\tilde a_4$ in the form
\[
\tilde a_4 = \tilde b_4 (\xi_0^2 + \xi_1^2 - \xi_2^2 - \xi_3^2) + 
\tilde c_4  (\xi_0 + \xi_1 - \xi_2 - \xi_3) =
 \tilde b_4 (\xi_0^2 + \xi_1^2 - \xi_2^2 - \xi_3^2) + 
\tilde c_4  \xi
\]
where $\tilde b_4$ and $\tilde c_4$ are viewed as functions of
$\xi_0$, $\xi_1$, $\xi_2$, $\xi_3$ and $\xi$
which are smooth in $\xi$ in a compact set and are smooth on dyadic
scales $|\xi_j| \sim \lambda_j$ and have size
\[
\tilde b_4 \sim a(\lambda) \alpha^{-1} \mu^{-1}, \qquad 
\tilde c_4 \sim a(\lambda) \alpha^{-1}, \qquad
 \{\lambda_0,\lambda_1,\lambda_2,\lambda_3\} = \{
\lambda,\alpha,\mu,\mu \}, \quad \lambda \leq \alpha \leq \mu
\]
This leads to a decomposition 
\[
\tilde R_4(u) = \tilde B_4(u) + \tilde C_4(u)
\]
The $ \tilde C_4$ term is better behaved, as one can see in the
following integration by parts:
\[
\begin{split}
 \tilde C_4(u)= & \int  \phi(x) e^{i x \xi}\xi \int_{P_\xi}
 i \tilde c_4(\xi_0,\xi_1, \xi_2 , \xi_3,\xi) 
  u(\xi_1)  u (\xi_2) 
\overline{u(\xi_3) u (\xi_4)} d\xi_1 d\xi_2 d\xi_3 d\xi dx
\\
= & - \int  \phi'(x) e^{i x \xi} \int_{P_\xi}
 \tilde c_4(\xi_0,\xi_1, \xi_2 , \xi_3,\xi)  u(\xi_0)  u (\xi_1) 
\overline{u(\xi_2) \bar u (\xi_3)}  d\xi_1 d\xi_2 d\xi_3  dx d\xi dx.
\end{split}
\]
We will estimate $\tilde C_4$ directly. For the $ \tilde B_4$ term, on
the other hand, we introduce an energy correction
\[ 
\tilde E_1(u) =  \int  \phi(x) e^{i x \xi} \int_{P_\xi}  
\tilde b_4(\xi_0,\xi_1, \xi_2 , \xi_3,\xi) u(\xi_0) u (\xi_1) 
\overline{u(\xi_2) u (\xi_3)} d\xi_1 d\xi_2 d\xi_3\, d\xi\,   dx\, . 
\]
Then 
\[
\begin{split}  
\frac{d}{dt} 
\tilde E_1(u) = \tilde B_4(u) \pm  \tilde R_{6}(u)   
\end{split} 
\]
where $\tilde R_{6}(u)$ is given by
\[
R_6(u) = 2\Re \int  \phi(x) e^{i x \xi} \int_{P_\xi} i 
b_4(\xi_0,\xi_1,\xi_2,\xi_3) \widehat{|u|^2u}(\xi_0) \hat u(\xi_1) \overline{\hat
u(\xi_2) \hat u (\xi_3)} d\xi_j d \xi dx
\]

With all the notations above, our full energy relation reads
\[
\frac{d}{dt} (\tilde E_0(u) \mp \tilde E_1(u)) = \tilde R_2(u) \pm 
\tilde C_4(u) \mp \tilde R_{6} (u)   
\]
where the choice of the signs depends on the focusing or defocusing 
character of the problem, and plays no role in our analysis.
Our goal is to use this relation to estimate the time integral of 
$\tilde R_2(u)$, which in turn controls the local energy. Integrating
between $0$ and $1$ we obtain
\begin{equation} \label{tr2bd}
\begin{split}
\int_0^1  \tilde R_2(u) dt = \left. \ 
 (\tilde E_0(u) \mp \tilde E_1(u))\right|_0^1  \mp 
 \int_0^1 \tilde C_4(u) dt 
\pm  \int_0^1  \tilde R_6(u) dt 
\end{split}
\end{equation}
Following the steps in the previous section we bound the 
terms on the right. We begin with $\tilde E_0$ and $\tilde E_1$:

\begin{lemma}
  Let $a \in S_\Lambda$ and $\tilde a$, $\phi$ as above. Then at fixed
  time we have
\begin{equation} 
\left|\tilde E_0(u) \right| \lesssim E_0(u)
\end{equation}
respectively 
\begin{equation}
|\tilde E_1(u)| \lesssim  E_0(u) \Vert u \Vert_{H^{-1/2}_\Lambda}^2  
\end{equation}
\end{lemma}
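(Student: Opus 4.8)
The plan is to prove the two bounds separately. The first is a fixed-time quadratic estimate which follows from the fact that $\phi$ preserves dyadic frequency localization; the second repeats, almost verbatim, the frequency-space argument already used for Lemma~\ref{e1bd}, once the modulation $e^{ix\xi}$ is accounted for.

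For the bound on $\tilde E_0$, I would first use that $\phi$ and $\tilde a(D)$ are self-adjoint (as $\phi$ is real and $\tilde a$ is a real symbol) to write $\tilde E_0(u)=\Re\langle \phi\,\tilde a(D)u,u\rangle$. The key observation is that, since $\phi'=\psi^2$ with $\hat\psi$ supported in $[-1,1]$, the Fourier transform $\hat\phi$ is supported in $[-2,2]$; hence multiplication by $\phi$ moves frequencies by at most $O(1)$ and maps a dyadic piece $u_\lambda$ to a function whose frequency support is still $\sim\lambda$. Decomposing $u=\sum_\lambda u_\lambda$, the term $\langle\phi\,\tilde a(D)u_\lambda,u_\mu\rangle$ therefore vanishes unless $\mu\sim\lambda$. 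Using $\|\phi\|_{L^\infty}\lesssim1$, the domination $|\tilde a|\lesssim a$ coming from $\tilde a\in S(a)$, and the fact that $a$ is essentially constant on dyadic blocks (by the symbol regularity (i)), I would estimate each surviving term by $a(\lambda)\|u_\lambda\|_{L^2}\|u_\mu\|_{L^2}$ and sum the near-diagonal contributions to obtain $|\tilde E_0(u)|\lesssim\sum_\lambda a(\lambda)\|u_\lambda\|_{L^2}^2\approx E_0(u)$.

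For the bound on $\tilde E_1$, I would first integrate in $x$: since $\int\phi(x)e^{ix\xi}\,dx$ equals $\hat\phi(-\xi)$ (up to a constant) and is supported in $\xi\in[-2,2]$, the frequency imbalance $\xi=\xi_0+\xi_1-\xi_2-\xi_3$ is effectively confined to a compact set, which is exactly the regime in which Proposition~\ref{pbbounds} (applied with $\xi$ as a parameter) yields $|\tilde b_4|\sim a(\lambda)\alpha^{-1}\mu^{-1}$ together with smoothness on the dyadic scales and in $\xi$. I would then decompose all four factors dyadically, set $\{\lambda_0,\lambda_1,\lambda_2,\lambda_3\}=\{\lambda,\alpha,\mu,\mu\}$ with $\lambda\le\alpha\le\mu$, and expand $\tilde b_4$ in a rapidly convergent Fourier series jointly in the $\xi_j$ and in $\xi$. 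Each exponential in $\xi_j$ translates the corresponding factor, while each exponential in $\xi$ translates $\phi$, all with rapidly decaying coefficients and without affecting $L^2$ or $L^\infty$ norms. After Fourier inversion in $\xi$ this separates variables and reduces a single block to $a(\lambda)\alpha^{-1}\mu^{-1}\big|\int\phi(x)\,u_\lambda\,u_\mu\,\overline{u_\alpha\,u_\mu}\,dx\big|$, i.e. precisely the integral \eqref{e1di} of Lemma~\ref{e1bd} with the harmless bounded factor $\phi$ inserted.

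From this point the argument is identical to Lemma~\ref{e1bd}: placing the two lowest-frequency factors $u_\lambda,u_\alpha$ in $L^\infty$ by Bernstein and the two factors $u_\mu$ in $L^2$ gives the block bound $a(\lambda)\lambda^{1/2}\alpha^{-1/2}\mu^{-1}\|u_\lambda\|_{L^2}\|u_\alpha\|_{L^2}\|u_\mu\|_{L^2}^2$. Estimating the two high-frequency factors in $H^{-1/2}_\Lambda$, so that $\|u_\mu\|_{L^2}^2\approx\mu\|u_\mu\|_{H^{-1/2}_\Lambda}^2$, and extracting the gain $(\lambda/\alpha)^{1/4}\le1$ arising from $\lambda\le\alpha$, a Schur-test summation over the dyadic frequencies yields $|\tilde E_1(u)|\lesssim E_0(u)\|u\|_{H^{-1/2}_\Lambda}^2$. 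I expect the only genuinely new point, relative to Lemma~\ref{e1bd}, to be the bookkeeping of the modulation $e^{ix\xi}$ and the $\xi$-dependence of $\tilde b_4$; the mild obstacle is to check that these are benign, namely that $\xi$ stays $O(1)$ and that the modulation is exactly what recombines the frequency-space form into the local product above, after which no new difficulty arises.
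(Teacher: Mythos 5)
Your proposal follows essentially the same route as the paper's own proof: the paper likewise reduces both bounds to Lemma~\ref{e1bd} via a Littlewood--Paley decomposition and separation of variables, observing that $\phi$ --- being bounded with a compactly supported Fourier transform --- can be harmlessly absorbed into one factor and discarded, and that Proposition~\ref{pbbounds} applies with the output frequency $\xi$ treated as a parameter ranging over a compact set. If anything, your write-up is more explicit than the paper's two-line reduction, in particular about the near-diagonal structure behind the $\tilde E_0$ bound and about the source of the off-diagonal gain needed for the dyadic summation in the $\tilde E_1$ bound.
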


\begin{proof} The proof repeats the proof of Lemma~\ref{e1bd}.
One begins with a Littlewood-Paley decomposition. Separating variables
the symbols $\tilde a$ and $\tilde b_4$ can be replaced by their 
sizes for each dyadic piece. Once this is done, we observe
that, since $\phi$ is bounded and has a compactly supported Fourier 
transform,  it can be harmlessly included in either factor and discarded.
The proof is concluded as in Lemma~\ref{e1bd}.
\end{proof}

We continue with the bound for $\tilde C_4$:

\begin{lemma}
  Let $a \in S_\Lambda$ and $\tilde a$, $\phi$ as above. Then
\begin{equation} \label{tc4}
\left| \int_{0}^1  \tilde C_4(u) dt \right | 
\lesssim \| u\|_{X^a}^2 \| u \|_{X^{-1/4}_{\Lambda}\cap X^{-1/4}_{\Lambda,le}}^2 
\end{equation}
\label{l:c4}\end{lemma}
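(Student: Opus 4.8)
The plan is to follow the scheme of Lemma~\ref{r6} essentially verbatim, exploiting the fact that $\tilde C_4$ is only quadrilinear and hence strictly simpler than the sextic term $\tilde R_6$. First I would perform a full dyadic Littlewood--Paley decomposition of the four factors, writing $\int_0^1 \tilde C_4(u)\,dt$ as a sum over dyadic frequencies $\{\lambda_0,\lambda_1,\lambda_2,\lambda_3\}$ with increasing rearrangement $\{\lambda,\alpha,\mu,\mu\}$, $\lambda\le\alpha\le\mu$. On each such block the symbol $\tilde c_4$ is smooth on the respective dyadic scales and of size $a(\lambda)\alpha^{-1}$, uniformly in the $O(1)$ parameter $\xi$, by Proposition~\ref{pbbounds} and the bounds \eqref{b4c4}. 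As in the proofs of Lemma~\ref{e1bd} and Lemma~\ref{r6}, I would expand $\tilde c_4$ into a rapidly convergent Fourier series on these scales, separate variables, and thereby replace $\tilde c_4$ by the constant $a(\lambda)\alpha^{-1}$ at the cost of harmless modulations (i.e.\ spatial translations) absorbed into the individual factors. The weight $\phi'=\psi^2$ is bounded with Fourier transform supported in a fixed compact set, so it too may be absorbed into one factor and discarded; moreover its rapid spatial decay confines the integral to $O(1)$ spatial cubes with rapidly decaying tails, which is exactly compatible with the $\sup_j$ present in both $X^a$ and $X^{-1/4}_{\Lambda,le}$ and renders the spatial summation trivial.

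This reduces matters to the model quadrilinear space-time integral
\[
L=\int_0^1\!\!\int_\R u_{\lambda_0}u_{\lambda_1}\overline{u_{\lambda_2}u_{\lambda_3}}\,dx\,dt,
\]
for which I would first establish a bound $|L|\lesssim C_L\prod_j\|u_{\lambda_j}\|_{U^2_\Delta}$ of the type \eqref{jest}, and then convert it into the counterpart of \eqref{iest} placing the two lowest frequencies $\lambda,\alpha$ into $X^a$ and the two highest into $X^{-1/4}_\Lambda\cap X^{-1/4}_{\Lambda,le}$. The constant $C_L$ is produced exactly as in Lemma~\ref{est}: when $\alpha\ll\mu$ the two bilinear $L^2$ estimates \eqref{biuu} applied to $u_\lambda u_\mu$ and $u_\alpha u_\mu$ give the gain $C_L\lesssim\mu^{-1}$, whereas in the balanced regime $\alpha\sim\mu$, where bilinear separation degenerates, I would fall back on the $L^8L^4$ Strichartz bound as in \eqref{ineq:A}, working on the finest time intervals so as to harvest the $|I|^{1/2}$ factor. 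The passage $C_L\rightsquigarrow C_K$ is the one carried out in Lemma~\ref{r6}: decomposing each factor in space on the unit scale and in time on its own $\delta t_{\lambda_j}=\lambda_j^{-1}$ scale (with off-diagonal tails controlled by \eqref{tails}), the summation $\sum_J$ built into the local-energy norm absorbs the time summation of the two high factors for free, while the two low factors, measured in the sup-in-time norm $X^a$, contribute a counting factor $(\lambda\alpha)^{1/2}$.

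Assembling these ingredients produces a constant of the schematic shape
\[
C_K\sim \frac{a(\lambda)}{\alpha}\,\bigl(a(\lambda)a(\alpha)\bigr)^{-1/2}\,\mu^{1/2}\,(\lambda\alpha)^{1/2}\,C_L .
\]
The point of the endpoint $s=-\tfrac14$ is that, using $a(\xi)\sim\xi^{-1/2}$ together with $C_L\lesssim\mu^{-1}$, the total homogeneity of $C_K$ is $-\tfrac12$, strictly negative; this is precisely the quantitative manifestation of $\tilde C_4$ being milder than $\tilde R_6$ (whose analogous constant is of homogeneity zero). Concretely one finds $C_K\lesssim \lambda^{1/4}\alpha^{-1/4}\mu^{-1/2}\lesssim 1$, with genuine decay as $\mu/\alpha\to\infty$, while the monotonicity of $\xi\mapsto a(\xi)(\Lambda^2+\xi^2)^{1/2}$ from the definition of $S_\Lambda$ keeps the $\lambda$--$\alpha$ ratio under control. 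A Schur/Cauchy--Schwarz argument against the $\ell^2$ frequency sequences then recovers $\|u\|_{X^a}^2\|u\|_{X^{-1/4}_\Lambda\cap X^{-1/4}_{\Lambda,le}}^2$.

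The main obstacle is the balanced configuration $\lambda\le\alpha\sim\mu$, in which the bilinear $L^2$ gain is unavailable and one is forced onto finite-interval Strichartz, so that the whole estimate must be run on the finest time intervals and reassembled through the time-interval summation machinery; the logarithmic factors that arise in converting $DV^2$ into $DU^2$ have to be tracked but are harmless. Fortunately, because $C_K$ here has strictly negative homogeneity, there is room to spare and the cases need not close on a knife-edge, which is what makes this lemma genuinely easier than its sextic analogue Lemma~\ref{r6}.
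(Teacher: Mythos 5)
Your treatment of the frequency--separated regime (two bilinear $L^2$ estimates when $\alpha\ll\mu$) and of the balanced regime at genuinely high frequency ($\lambda\sim\alpha\sim\mu\gg\Lambda$, via Strichartz on intervals of length $\mu^{-1}$ with the interval summation absorbed by the local energy norms) matches the paper's Cases 1 and 2 and is fine. The genuine gap is the remaining block $\lambda\sim\alpha\sim\mu\sim\Lambda$, where all four factors are $u_\Lambda$ --- and $u_\Lambda$, by the conventions of the paper, contains \emph{all} frequencies $\le\Lambda$. This is precisely the paper's Case 3, which occupies most of its proof, and your scheme breaks down there in two ways. First, your mechanism for the time--interval summation --- ``the summation $\sum_J$ built into the local-energy norm absorbs the time summation of the two high factors for free'' --- is unavailable: the norm $X^{-1/4}_{\Lambda,le}$ carries a $\partial_x$, and for the content of $u_\Lambda$ at an interior frequency $\sigma\ll\Lambda$ one only has $\|\chi_j\partial_x u_\sigma\|\sim\sigma\|\chi_j u_\sigma\|$ (Bernstein goes the wrong way on a block containing low frequencies), so the $\ell^2_J$ control you need degrades by $(\Lambda/\sigma)^2$. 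For the zero-speed soliton at frequency $\sigma=\Lambda^{1/2}$ this is a genuine loss of a full factor of $\Lambda$, not a technicality. Second, your quantitative conclusion --- that $C_K$ has strictly negative homogeneity, so ``there is room to spare and the cases need not close on a knife-edge,'' making the lemma ``genuinely easier'' than Lemma~\ref{r6} --- is exactly wrong in this block: your input $C_L\lesssim\mu^{-1}$ presupposes frequency separation, and the paper's soliton computation ($\frac{a(\Lambda)}{\Lambda}\int_0^1\!\!\int|Q_\sigma|^4\,dx\,dt\sim a(\Lambda)\Lambda^{1/2}$ against $\|u\|_{X^a}^2\|u\|_{X^{-1/4}_\Lambda\cap X^{-1/4}_{\Lambda,le}}^2\sim a(\Lambda)\Lambda^{1/2}$) shows the estimate is \emph{saturated} there. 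This saturation is the very reason $s=-\frac14$ is the sharp threshold for this lemma; any proof of Case 3 must be lossless, and a scheme designed around having ``room to spare'' cannot simply be pushed through it.

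What the missing case requires (and what the paper does) is: a recognition that in the focusing case the low block is obstructed by solitons (in the defocusing case the term even has a favorable Morawetz sign); a further sub-dyadic decomposition $u_\Lambda=\sum_{\sqrt{\Lambda}\le\sigma\le\Lambda}u_\sigma$ isolating the soliton concentration scale $\Lambda^{1/2}$; and, for the balanced interior interactions $\sigma\sim\sigma'\gg\Lambda^{1/2}$ where neither bilinear separation nor pure energy/Bernstein suffices alone, a Whitney decomposition of frequency space with respect to the distance to the diagonal, yielding the interpolated bound $\min\bigl\{\Lambda^{3/2}/(\sigma\mu^2),\,\sigma/\Lambda^{1/2}\bigr\}$ whose two branches balance exactly at the soliton scale. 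None of this, nor any substitute for it, appears in your proposal, so as written it does not prove the lemma.
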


This result does not have a counterpart in the previous section, and
requires a complete proof. In order to keep the argument fluid we
postpone the proof for the end of the section.
Finally we have the bound for $\tilde R_6$:
\begin{lemma}
   Let $a \in S_\Lambda$ and $\tilde a$, $\phi$ as above. Then
\begin{equation} \label{tr6}
\left| \int_0^1  \tilde R_{6}(u)dt  \right| \lesssim  
\Vert u \Vert_{X^a}^2 \Vert u \Vert_{X^{-1/4}_{\Lambda,le}}^4 .
\end{equation}
\end{lemma}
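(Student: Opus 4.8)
The plan is to treat this lemma as the local-energy counterpart of Lemma~\ref{r6} and to rerun that argument almost verbatim. The six-linear structure is identical: after a full dyadic decomposition of all factors and symmetrization, $\int_0^1 \tilde R_6(u)\,dt$ is a sum of terms built from $\widehat{|u|^2u}(\xi_0)$, $\hat u(\xi_1)$ and $\overline{\hat u(\xi_2)\hat u(\xi_3)}$ integrated against the symbol $\tilde b_4$ over $P_\xi$, the only new features being the spatial weight $\phi(x)$ and the momentum-transfer integration in $\xi$. The first task is to dispose of these two features. Since $\phi' = \psi^2$ with $\hat\psi$ supported in $[-1,1]$, the Fourier transform of $\phi$ lives in a fixed compact set, so the transferred momentum $\xi = \xi_0+\xi_1-\xi_2-\xi_3$ is confined to $|\xi|\lesssim 1$, and $\phi$ itself is bounded. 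I would expand both the $\xi$-localization and the smooth $\xi$-dependence of $\tilde b_4$ into rapidly convergent Fourier series; this separates the factor $e^{ix\xi}$ into exponentials $e^{ix\xi_j}$ that are absorbed as harmless spatial translations into the individual factors, exactly as $P_{\lambda_0}$ was discarded in Lemma~\ref{r6}, while the boundedness of $\phi$ contributes no growth.

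The second task is to remove the symbol. By Proposition~\ref{pbbounds}, applied with $\xi\in[-1,1]$ kept as a parameter—the size and regularity bounds \eqref{b4c4} being uniform in this parameter—$\tilde b_4$ is smooth on the dyadic scales in each $\xi_j$ with size $a(\lambda)\alpha^{-1}\mu^{-1}$. Hence a further Fourier-series expansion separates variables and replaces $\tilde b_4$ by this scalar size together with frequency projectors and translations, just as in Lemma~\ref{r6}. At this point the problem reduces to the core space-time integral $L$ and its interval-localized version $K$, so that the bound \eqref{jest} and its consequence \eqref{iest} apply unchanged. The constant analysis—Case~A ($\lambda_0\geq\mu_2$, with subcases A1--A3) yielding \eqref{largel0}, and Case~B ($\lambda_0\ll\mu_2$, with the nonresonant subcase B2) yielding \eqref{smalll0}—is then reused verbatim, giving $C_K\lesssim 1$ with off-diagonal decay sufficient for the dyadic summation.

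The genuinely new bookkeeping concerns how the spatial weight interacts with the cutoffs $\chi_j^\lambda$ of the local-energy norms, and it is here that the cleaner right-hand side $\Vert u\Vert_{X^a}^2\,\Vert u\Vert_{X^{-1/4}_{\Lambda,le}}^4$ emerges: in the passage from $L$ to $K$ the two lowest-frequency factors carry the spatial square-summation and land in $X^a$, while the four highest factors inherit the time-interval summation of the local-energy norm and land in $X^{-1/4}_{\Lambda,le}$, the relevant $k$- and $I$-summations being controlled by the decay in \eqref{tails} exactly as before. The main obstacle I anticipate is confirming that the variable separation survives the extra $\xi$-integration: one must check that the nonresonance exploited in Case~B2—that at least one factor must carry high modulation—is not destroyed by the $O(1)$ momentum shift $\xi$, and that $\tilde b_4$ remains smooth on the dyadic scales uniformly in $\xi$ so that the Fourier-series expansion converges. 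Both points follow from the uniformity in Proposition~\ref{pbbounds} together with the fact that $|\xi|\lesssim 1$ is negligible against the dyadic frequency gaps.
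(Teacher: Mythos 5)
Your proposal is correct and follows essentially the same route as the paper: the paper's proof simply reruns Lemma~\ref{r6}, noting that $\hat\phi$ has compact support (so dyadic frequency localizations are unaffected), that $\phi$ is bounded (so the $L^p$ bounds are unaffected), and that $\phi$ is time independent (so the Case~B2 nonresonance analysis survives) --- precisely the three points you verify, with your observation that the $O(1)$ momentum shift is negligible against the dyadic modulation gaps being the same reason the paper invokes via time-independence. The variable-separation in $\xi$ and the use of Proposition~\ref{pbbounds} with $\xi$ as a parameter, which you fold into the proof, appear in the paper as part of the setup preceding the lemma, but this is only a difference of bookkeeping, not of substance.
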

\begin{proof}
The proof repeats the proof of Lemma~\ref{r6}. Since $\hat \phi$ 
has compact support, it does not affect any of the dyadic frequency 
localizations. Since $\phi$ is bounded, it does not affect any of the 
$L^p$ bounds. Finally, since $\phi$ is time independent, it does not affect
any of the nonresonance considerations in Case B2.
\end{proof}

Finally, we turn our attention to $\tilde R_2$.
Since $\phi' = \psi^2$, we can rewrite $\tilde R_2(u)$ in the form
\[
\tilde R_2(u) = \| \psi  (D \tilde a(D))^{\frac12} u\|_{L^2}^2 
+ \langle r^{2,w}(x,D) u,u \rangle
\]
where, by a slight abuse of notation, $( \phi' (D \tilde
a(D))^{\frac12}$ stands for the smooth odd square root, and the
operator $r^{2,w}$ accounts for the lower order terms, with its symbol
$r^2$ satisfying
\[ 
  \left|\partial_x^a  \partial_\xi^b r^2(x,\xi) \right| 
\lesssim  \langle x\rangle^{-N} \langle\Lambda^2+\xi^2\rangle^{-b/2} a(\xi)
 \] 
and hence it has a a negligible effect
\begin{equation}\label{tr2}
| \langle r^{2,w}(x,D) u,u \rangle| \lesssim E_0(u). 
\end{equation}
Thus we obtain 
\begin{lemma}
The quadratic form $\tilde R_2$ satisfies the bound
\begin{equation}
\| \psi  (D \tilde a(D))^{\frac12} u\|_{L^2}^2 \leq \tilde R_2(u)+
c  E_0(u).
\end{equation}
\end{lemma}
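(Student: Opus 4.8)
The statement is an immediate rearrangement of the decomposition
\[
\tilde R_2(u) = \| \psi  (D \tilde a(D))^{\frac12} u\|_{L^2}^2
+ \langle r^{2,w}(x,D) u,u \rangle
\]
recorded just above, together with the remainder estimate \eqref{tr2}. Solving for the positive term and discarding the sign of the lower order piece gives
\[
\| \psi  (D \tilde a(D))^{\frac12} u\|_{L^2}^2 = \tilde R_2(u) - \langle r^{2,w}(x,D) u,u\rangle \le \tilde R_2(u) + |\langle r^{2,w}(x,D) u,u\rangle| \le \tilde R_2(u) + c\,E_0(u),
\]
which is the claim. Hence the only real content lies in producing the two assumed ingredients, and the plan is to obtain them by pseudodifferential calculus.

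For the decomposition I would first write $\tilde R_2(u) = \langle (BD+DB)u,u\rangle$ with the self-adjoint operator $B = \phi'\tilde a(D)+\tilde a(D)\phi'$, using $D^\ast = D$ together with the self-adjointness of the multiplication $\phi'$ and of the Fourier multiplier $\tilde a(D)$. Because $\phi' = \psi^2 \ge 0$ and $\xi\tilde a(\xi)\ge 0$ (as $\tilde a$ is odd and monotone), the operator $(D\tilde a(D))^{\frac12}\psi^2(D\tilde a(D))^{\frac12}$ is nonnegative, with principal symbol $\phi'(x)\,\xi\tilde a(\xi)$, and this matches the principal symbol of $BD+DB$ up to a fixed positive constant. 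Subtracting off $\|\psi(D\tilde a(D))^{\frac12}u\|_{L^2}^2$ therefore leaves a quadratic form $\langle r^{2,w}(x,D)u,u\rangle$ whose symbol is one order lower; that any leftover positive multiple of the square only helps is exactly why an \emph{inequality} with a constant $c$ is all that is asserted, so the precise normalization of the principal part is irrelevant. The spatial weight is inherited from $\phi'$ and its derivatives, all rapidly decaying, while the Fourier support of $\psi$ in $[-1,1]$ keeps the construction compatible with the dyadic localizations used elsewhere.

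Verifying the symbol class of $r^2$ is the analytic heart of the argument. Running the product and adjoint formulas of the calculus, each commutator exchanging a multiplication by a derivative of $\phi'$ with $\tilde a(D)$ costs one factor of $(\Lambda^2+\xi^2)^{-\frac12}$ relative to the order one symbol $\xi\tilde a(\xi)$; here the symbol regularity $|\partial^\alpha a(\xi)|\lesssim a(\xi)(\Lambda^2+\xi^2)^{-\alpha/2}$ built into $S_\Lambda$ is precisely what makes every step uniform in $\Lambda\ge 1$. This yields the stated bound $|\partial_x^a\partial_\xi^b r^2(x,\xi)|\lesssim \langle x\rangle^{-N}\langle\Lambda^2+\xi^2\rangle^{-b/2}a(\xi)$. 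To deduce \eqref{tr2} I would factor $r^{2,w}(x,D) = a(D)^{\frac12}\,S\,a(D)^{\frac12}$ with $S$ an operator whose symbol obeys bounded derivative estimates in both $x$ and $\xi$; then $S$ is $L^2$ bounded by Calder\'on--Vaillancourt and
\[
|\langle r^{2,w}(x,D)u,u\rangle| \lesssim \|a(D)^{\frac12}u\|_{L^2}^2 = \|u\|_{H^a}^2 = E_0(u).
\]

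The step I expect to be the main obstacle is precisely this uniform symbol bookkeeping: one must confirm that the remainder simultaneously gains a full derivative \emph{and} retains the spatial decay $\langle x\rangle^{-N}$, with all constants independent of $\Lambda$, and that no term of the same order as the positive principal part leaks into $r^2$. Once the leading symbols are correctly matched, everything else is routine.
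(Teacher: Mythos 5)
Your proposal is correct and takes essentially the same route as the paper: the paper also obtains the lemma by simply rearranging the displayed decomposition $\tilde R_2(u) = \| \psi (D \tilde a(D))^{\frac12} u\|_{L^2}^2 + \langle r^{2,w}(x,D)u,u\rangle$ and invoking the remainder bound \eqref{tr2}, with the decomposition and the symbol estimate for $r^2$ asserted rather than proved in detail. Your pseudodifferential sketch of those two ingredients (principal-symbol matching for $BD+DB$ with $B = \phi'\tilde a(D)+\tilde a(D)\phi'$, and Calder\'on--Vaillancourt after conjugating by $a(D)^{\frac12}$) supplies exactly what the paper leaves implicit, and your observation that the normalization of the principal part is harmless because the square term is nonnegative is sound.
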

Taking into account the last four lemmas, we have proved
that
\[
 \int_{0}^1 \| \psi  (D \tilde a(D))^{\frac12} u\|_{L^2}^2 dt
\lesssim \sup_t \Vert u(t) \Vert_{H^a}^2(1+\|u(t)\|_{H^{-\frac12}_\Lambda}^2) 
+ \Vert u \Vert_{X^a}^2 ( \Vert u \Vert_{X^{-1/4}_{\Lambda}\cap X^{-1/4}_{\Lambda,le}}^2 
+ \Vert u \Vert_{X^{-1/4}_{\Lambda,le}}^4). 
\]
The right hand side is translation invariant but the left hand side is not.
Hence we can replace $\psi$ by $\psi(\cdot + x_0)$ and take the 
supremum over $x_0$. But some straightforward computations show that
\[
\sup_{j} \sum_{\lambda \geq \Lambda} \lambda^{-1} a(\lambda) \| \chi_j
\partial_x u_\lambda \|^2_{L^2([0,1] \times \R)} \lesssim  
\sup_{x_0} \| \psi(\cdot + x_0)  (D \tilde a(D))^{\frac12} u\|_{L^2([0,1] \times \R)}^2
\]
Hence we have proved the main result of this section:
 
\begin{proposition} \label{p:le}
Let $a \in S_\Lambda$. Then all $L^2$ solutions $u$ to \eqref{nls} 
satisfy the following bound in the time interval $[0,1]$:
\begin{equation}\label{mainle}
  \sup_{j} \sum_{\lambda \geq \Lambda} \lambda^{-1} a(\lambda) \| \chi_j\partial_x
  u_\lambda \|^2_{L^2} \lesssim \sup_t \Vert u(t) \Vert_{H^a}^2(1+\|u(t)\|_{H^{-\frac12}_\Lambda}^2) 
  + \Vert u \Vert_{X^a}^2 ( \Vert u \Vert_{X^{-\frac14}_{\Lambda}\cap X^{-\frac14}_{\Lambda,le}}^2 
  + \Vert u \Vert_{X^{-\frac14}_{\Lambda,le}}^4).
\end{equation}
\end{proposition}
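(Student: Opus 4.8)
The plan is to read off the proposition by combining the integrated energy identity \eqref{tr2bd} with the four lemmas that precede it, using the coercivity of the principal part of $\tilde R_2$, and then converting the resulting continuous weighted estimate into the discrete local energy norm on the left of \eqref{mainle}. First I would take \eqref{tr2bd} as the starting point, so that $\int_0^1 \tilde R_2(u)\,dt$ is expressed through the boundary values of $\tilde E_0 \mp \tilde E_1$ together with the time integrals of $\tilde C_4$ and $\tilde R_6$. The boundary terms are controlled by the first lemma of the section, which gives $|\tilde E_0(u)| \lesssim \|u\|_{H^a}^2$ and $|\tilde E_1(u)| \lesssim \|u\|_{H^a}^2 \|u\|_{H^{-1/2}_\Lambda}^2$; evaluating at $t=0,1$ and passing to the supremum in $t$ produces exactly the first term $\sup_t \|u\|_{H^a}^2(1+\|u\|_{H^{-1/2}_\Lambda}^2)$ on the right of \eqref{mainle}. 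The two time integrals are then bounded by Lemma~\ref{l:c4} and by the $\tilde R_6$ lemma, yielding $\|u\|_{X^a}^2\|u\|_{X^{-1/4}_\Lambda \cap X^{-1/4}_{\Lambda,le}}^2$ and $\|u\|_{X^a}^2\|u\|_{X^{-1/4}_{\Lambda,le}}^4$ respectively, which are precisely the remaining two terms.

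Next I would exploit the positivity of the principal symbol of $\tilde R_2$. By the penultimate lemma we have, at each fixed time, the coercivity $\|\psi(D\tilde a(D))^{1/2}u\|_{L^2}^2 \leq \tilde R_2(u) + cE_0(u)$, where $E_0(u)=\|u\|_{H^a}^2$ absorbs the lower-order symbol $r^{2,w}$ via \eqref{tr2}. Integrating this inequality over $[0,1]$ and moving $c\int_0^1 E_0(u)\,dt \lesssim \sup_t \|u\|_{H^a}^2$ into the bound already assembled above, I obtain that $\int_0^1 \|\psi(D\tilde a(D))^{1/2}u\|_{L^2}^2\,dt$ is dominated by the right-hand side of \eqref{mainle}.

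The final, and only genuinely delicate, step is the passage to the discrete local energy norm. The right-hand side just obtained is invariant under spatial translations of $u$, whereas the cutoff $\psi$ is localized, so replacing $\psi$ by $\psi(\cdot+x_0)$ and taking the supremum over $x_0\in\R$ costs nothing on the right. It then remains to dominate $\sup_j \sum_{\lambda\geq\Lambda}\lambda^{-1}a(\lambda)\|\chi_j\partial_x u_\lambda\|_{L^2([0,1]\times\R)}^2$ by $\sup_{x_0}\|\psi(\cdot+x_0)(D\tilde a(D))^{1/2}u\|_{L^2([0,1]\times\R)}^2$. On the frequency-$\lambda$ component with $\lambda\geq\Lambda\geq 1$ the symbol $\xi\tilde a(\xi)$ is positive and comparable to $\lambda a(\lambda)$, since $\tilde a(\xi)=\pm a(\xi)\sim a(\lambda)$ there; hence $(D\tilde a(D))^{1/2}$ acts like the scalar $\lambda^{1/2}a(\lambda)^{1/2}$, which matches the weight because $\lambda^{-1}a(\lambda)\|\chi_j\partial_x u_\lambda\|^2 \sim \lambda a(\lambda)\|\chi_j u_\lambda\|^2$. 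As $\hat\psi$ is supported in $[-1,1]$ and $\lambda\geq 1$, multiplication by $\psi$ preserves the dyadic frequency localization up to rapidly decaying tails, so Littlewood--Paley orthogonality permits the $\lambda$-summation, and the supremum over continuous translates $x_0$ dominates the supremum over the unit-scale grid indexed by $j$.

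I expect the substance of the difficulty to reside in the lemmas here invoked — especially the $\tilde C_4$ bound of Lemma~\ref{l:c4}, whose proof is deferred, and the nonresonance structure underlying the $\tilde R_6$ estimate — rather than in this final assembly. For the proposition itself the one step demanding care is the continuous-to-discrete conversion, where the failure of the spatial weight $\psi$ to commute with both the Littlewood--Paley projectors and $(D\tilde a(D))^{1/2}$ must be controlled; this is harmless precisely because $\hat\psi$ has compact support and every relevant frequency obeys $\lambda\geq\Lambda\geq 1$.
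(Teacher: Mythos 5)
Your proposal is correct and follows essentially the same route as the paper: the paper likewise integrates \eqref{tr2bd}, invokes the four preceding lemmas (the $\tilde E_0$, $\tilde E_1$, $\tilde C_4$ and $\tilde R_6$ bounds together with the coercivity $\| \psi (D\tilde a(D))^{1/2} u\|_{L^2}^2 \le \tilde R_2(u) + cE_0(u)$), exploits translation invariance to take $\sup_{x_0}$ over $\psi(\cdot+x_0)$, and then passes to the discrete sum on the left of \eqref{mainle} via what it calls ``straightforward computations,'' which you have fleshed out correctly. One small caveat: on the lowest block $\lambda = \Lambda$, which contains all frequencies $|\xi| \lesssim \Lambda$, one has $\tilde a(\xi) = \Lambda^{-1}\xi a(\xi)$ rather than $\pm a(\xi)$, so there the comparison rests on $\xi \tilde a(\xi) \sim \Lambda^{-1}\xi^2 a(\Lambda)$ matching the weight $\Lambda^{-1} a(\Lambda)|\xi|^2$ exactly --- which is precisely why the $\partial_x$ appears on the left --- but this does not affect the structure of your argument.
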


\begin{proof}[Proof of Lemma~\ref{l:c4}]
We recall that 
\[
\begin{split}
 \tilde C_4(u)=- \int  \phi'(x) e^{i x \xi} \int_{P_\xi}
 \tilde c_4(\xi_0,\xi_1, \xi_2 , \xi_3)  u(\xi_0)  u (\xi_1) 
\overline{u(\xi_2)  u (\xi_3)}  d\xi_j  d\xi dx .
\end{split}
\]
We use a Littlewood-Paley decomposition for all factors, denoting the
corresponding frequencies by
$\lambda_1,\lambda_2,\lambda_3,\lambda_4$.  Since $\hat \phi$ has
compact support, we can organize the four frequencies as usual
$\{ \lambda_1,\lambda_2,\lambda_3,\lambda_4\} = \{\lambda,\alpha,\mu,\mu\}$
with $\lambda \leq \alpha \leq \mu$. Within each dyadic term the 
symbol $c_4$ has size $a(\lambda) \alpha^{-1}$. Hence we can separate
variables and reduce the problem to estimating the expressions 
\[
c_{ \lambda\alpha\mu} =  a(\lambda) \alpha^{-1}
\int_{0}^1 \int_\R  \phi'(x) u_\lambda \bar u_\alpha u_\mu \bar u_\mu dx dt 
\]
The position of the complex conjugates is of minor importance; above
we chose the most interesting case. The argument applies to the other
case without major changes.
  We split time into time intervals
$I$ of size $\mu^{-1}$; the interval summation is accomplished due to
the fact that we use the local energy norms.

{\bf Case 1:} If $\lambda \ll \mu$ then we expand 
each factor with respect to the $\chi_j$  partition of unity 
on the unit spatial scale and use two bilinear $L^2$
bounds plus H\"older's inequality to obtain
\[
\begin{split}
|c_{\lambda\alpha  \mu}| \lesssim & \
a(\lambda) \alpha^{-1} \mu^{-1} \sup_j  \sum_{I}  
\| \chi_I \chi_j  u_\lambda \|_{U^2}\| \chi_I \chi_j   u_\alpha \|_{U^2}
\| \chi_I \chi_j   u_\mu \|_{U^2}\| \chi_I \chi_j   u_\mu \|_{U^2} 
\\
\lesssim & \
a(\lambda)^\frac12 a(\alpha)^{-\frac12} \alpha^{-1} 
\|   u_\lambda \|_{X^{a}}\|   u_\alpha \|_{X^{a}}
\|  u_\mu \|_{X^{-\frac12}_{\Lambda,le}}\| u_\mu  \|_{X^{-\frac12}_{\Lambda,le}} 
\end{split}
\]
with an easy summation in $\alpha$, $\lambda$ and $\mu$.

{\bf Case 2:} If $\Lambda \ll \lambda \sim \mu $ then we use three $L^6$
bounds, one energy and one H\"older inequality in time. We obtain a constant
\[
a(\mu) \mu^{-\frac32}
\]
which is again more than we need for the summation. 

{\bf Case 3:} If $\Lambda \sim \lambda \sim \mu $
then we have an additional difficulty, because we can no longer use 
the full strength of the local energy decay. In this case 
we can assume that $\tilde c_4$ is constant (and nonzero) 
so there is no help from there. In the defocusing case
this  term comes with the right sign, as in the classical Morawetz
estimate. However, in the focusing case
we need to bound it, and there is a potential 
obstruction which is due to the existence of solitons.
Indeed, consider the soliton 
\[
u = Q_\sigma e^{- i\sigma^2 t}, \qquad \sigma = \Lambda^\frac12 
\]
where the scale for $\sigma$ is chosen so that this soliton has the largest mass
among the zero speed solitons of $H^{-\frac14}_\lambda$ size less than one.

Suppose $Qe^{it}$ is the normalized soliton. Then
by rescaling we produce frequency $\sigma$ solitons
\[
Q_\sigma e^{i \sigma^2 t}, \qquad Q_\sigma = \sigma Q(\sigma x)
\]
with mass $\sigma^\frac12$. Such a soliton has norm less than $1$ in
$H^s_\Lambda$ provided that
\[
\lambda \leq \Lambda^{2s}
\]
Now measure the same soliton in our space $X^s_{\Lambda,le}$.
We loose $\Lambda^{-2s}$ in the time interval summation. On the other 
hand we gain $\sigma/\Lambda$ because of the $\partial_x$ operator 
in the definition of $X^s_{\Lambda,le}$. 
Thus for all $\sigma$ as above we must have 
\[
\Lambda^{-2s} \frac{\sigma}{\Lambda} \lesssim 1
\]
This gives exactly the threshold $s = -\frac14$ which corresponds 
to $\sigma = \Lambda^\frac12$. Hence not only our full result 
(i.e. with large $\Lambda$ ) is false for $s < -\frac14$, but also
the $\partial_x$ operator in the definition of $X^s_{\Lambda,le}$
cannot be relaxed at all if $s = -\frac14$.

Then the low frequency part of the integral in the
lemma has the form
\[
\frac{a(\Lambda)}{\Lambda} \int_0^1 \int |Q_\sigma|^4 dxdt \sim 
\frac{a(\Lambda)}{\Lambda} \sigma^3 = a(\Lambda) \Lambda^{\frac12}
\]
which is a tight bound. This shows that $s = -\frac14$ is the actual
threshold for this lemma, and also that in proving the lemma 
in this case we need to be careful about the concentration scale
associated to the above soliton.

We consider a further dyadic decomposition 
\[
u_{\Lambda} = \sum_{\sqrt{\Lambda}\leq \lambda \leq \Lambda} u_\lambda
\]
where $u_{\sqrt{\Lambda}}$ also contains all the lower frequencies.
Then $c_{\Lambda\Lambda\Lambda}$ is decomposed into 
\[
c_{\Lambda\Lambda\Lambda} = \sum c_{ \lambda\alpha\mu}, \qquad 
 c_{ \lambda\alpha\mu}
 =  a(\Lambda) \Lambda^{-1}
\int_{0}^1 \int_\R  \psi^2 u_\lambda \bar u_\alpha u_\mu \bar u_\mu dx dt 
\]
where $\lambda \leq \alpha \leq \mu$ are in the range 
$[\sqrt{\Lambda},\Lambda]$. This is a slight abuse of notation,
since $\lambda$, $\alpha$ and $\mu$ here are in a different range from the 
one previously considered.
We look at several cases:

{\bf Case 3a:} If $\lambda \ll \mu$ then we can use two bilinear $L^2$
estimates to get
\[
|c_{ \lambda\alpha\mu}| \lesssim a(\Lambda) \Lambda^{-1} \mu^{-1}
\sum_{|I| = \Lambda^{-1}} \|  u_\lambda \|_{U^2_\Delta[I]}
\|  u_\alpha \|_{U^2_\Delta[I]} \|\psi u_\mu\|_{U^2_\Delta[I]} 
 \|\psi u_\mu\|_{U^2_\Delta[I]}
\]
For the first two factors we use the $X^{a}$ norm to get a uniform
bound with respect to $I$. For the second two we use the local energy
norm to gain $l^2$ summability with respect to $I$, but there is a
price to pay, namely a $\Lambda/\mu$ factor for each due to the
$\partial_x$ operator in the definition of $X^s_{\Lambda,le}$.  Using
Cauchy-Schwarz with respect to $I$ for the last two factors to obtain
\[
\begin{split}
  |c_{ \lambda\alpha\mu}| \lesssim & \ a(\Lambda) \Lambda^{-1}
  \mu^{-1} \frac{\Lambda^\frac12} {a(\Lambda)}
  \left(\frac{\Lambda}{\mu}\right)^2 \| u_\lambda \|_{X^{a}} \|
  u_\alpha \|_{X^{a}} \| u_\mu \|_{X^{-\frac14}_{\Lambda,le}} \| u_\mu
  \|_{X^{-\frac14}_{\Lambda,le}}
  \\
  = & \ \frac{\Lambda^{\frac32}}{\mu^3}\| u_\alpha \|_{X^{a}} \| u_\mu
    \|_{X^{-\frac14}_{\Lambda,le}} \| u_\mu
    \|_{X^{-\frac14}_{\Lambda,le}}
\end{split}
\]
where the factor ${\Lambda^\frac12}/ {a(\Lambda)}$ is due to the $L^2$
normalizations of the four factors.  Since $\mu \geq \Lambda^\frac12$
the above coefficient is less than $1$ and we have an easy summation
with respect to $\lambda$, $\alpha$ and $\mu$.

{\bf Case 3b:} If $\lambda \sim \mu$ then we cannot use bilinear $L^2$
bounds. To understand this difficulty we consider first the extreme case:

{\bf Case 3b(i):}  $\lambda \sim \mu \sim \Lambda^\frac12$.
Then we neglect the local energy norms and simply use the energy for each factor
combined with two Bernstein inequalities and Holder in time. We obtain
\[
|c_{ \mu\mu\mu}|\lesssim  a(\Lambda) \Lambda^{-1} \mu 
\|u_\mu\|_{L^\infty L^2}^4 \lesssim  \frac{\mu}{\Lambda^\frac12}
\|  u_\mu \|_{X^{a}}^2 \|  u_\mu \|_{X^\frac14_\Lambda}^2
\]
which is favorable exactly when $\mu = \Lambda^\frac12$.
We continue with the last case:

{\bf Case 3b(ii):} $\lambda \sim \mu \gg \Lambda^\frac12$.  We begin
with the frequencies $\xi_i$ for the four factors.  Due to the compact
frequency support of $\psi$, these are restricted to a unit
neighborhood of the set $P_0 = \{ \xi_0 + \xi_1 - \xi_2 - \xi_3 =
0\}$.  We consider the dyadic scale $100 \sigma \sim 100 + \max |\xi_i
-\xi_j|$. The idea is now to produce a decomposition of $\tilde C_4$
with respect to $\sigma$. To achieve that we begin with a
corresponding decomposition of $\R^4$. For each dyadic $\sigma \geq 1$
we consider the family $\mathcal Q_\sigma$ of dyadic cubes $Q$ with
side-length $\sigma$, indexed by their position $(k_0,k_1,k_2,k_3)$.
Then we consider a Whitney type partition of $\R^4$ with respect to
the distance to the diagonal $\{\xi_0=\xi_1 = \xi_2 = \xi_3\}$
\[
\mathcal Q^1 = \bigcup_{\sigma \geq 1}\mathcal Q^1_\sigma, \qquad 
\mathcal Q^1_\sigma = \mathcal Q^1 \cap \mathcal Q_\sigma
\]
where for $\sigma > 1$ the cubes in $\mathcal Q^1_\sigma$ are at distance 
$\sim 100\sigma$ from the diagonal, while the cubes in $\mathcal Q_1^1 $
are within distance $\lesssim 100$ of the diagonal. 
To this partition of $\R^4$ we associate a corresponding partition of unit 
\[
1 = \sum_{Q \in \mathcal Q_1}  \chi_Q(\xi_0,\xi_1,\xi_2,\xi_3)
\]
where $\chi_Q$ is smooth on the $\sigma $ scale for $Q \in \mathcal Q_\sigma$.
This is possible since each two neighboring cubes in $\mathcal Q^1$
have comparable size. The functions $\chi_Q$ do not have separated variables,
but we can separate variables as before, and, by a slight abuse of 
notation, assume that $\chi_Q$ has the form
\[
\chi_Q =\chi_\sigma^{k_0}(\xi_0) \chi_\sigma^{k_1}(\xi_1)\chi_\sigma^{k_2}(\xi_2)
\chi_\sigma^{k_3}(\xi_3)  
\]
for $Q \in \mathcal Q_\sigma^1$ at position $(k_0,k_1,k_2,k_3)$.
Thus we obtain a simultaneous quadrilinear decomposition
\[
c_{ \mu\mu\mu} = \sum_{1 \leq \sigma \leq \mu} c_{ \mu\mu\mu}[\sigma]:=
a(\Lambda) \Lambda^{-1} \sum_{\sigma \geq 1}
\sum_{(k_i) \in \mathcal K(\sigma)} \int_{0}^1 \int_\R  
\psi^2 P_{\sigma}^{k_0} u_\mu P_{\sigma}^{k_1} u_\mu\overline{P_{\sigma}^{k_2} u_\mu}
\  \overline{P_{\sigma}^{k_3} u_\mu} dx dt 
\]
where the positions $k_i$ are restricted to a range $\mathcal
K(\sigma)$ with the property that
\[
\begin{split}
\mathcal K(\sigma) \subset& \ \{|k_i - k_j| \lesssim 1000, \ \max |k_i -k_j| \geq 100,
\ |k_0+ k_1 - k_2 - k_3| \leq 10  \}, \qquad \sigma > 1,
\\
\mathcal K(\sigma) \subset & \ \{|k_i - k_j| \lesssim 1000, \
\ |k_0+ k_1 - k_2 - k_3| \leq 10  \}, \qquad \sigma = 1.
\end{split}
\]
For each integral we can apply either the argument in Case 3a or the argument
in case 3B(i). In both cases the summation with respect to $k_j$ 
is diagonal and causes no losses by the Cauchy-Schwarz inequality.

For the  Case 3a argument we split the quadrilinear form in two bilinear
products with $\sigma$ frequency separation to obtain 
\[
|  c_{ \mu\mu\mu}[\sigma]| \lesssim   
\frac{\Lambda^{\frac32}}{\sigma \mu^2}\|  u_\mu \|_{X^{a}}^2 \|  u_\mu \|_{X^\frac14_{\Lambda,le}}^2
\]
For the  Case 3b argument we use Bernstein in $\sigma$ frequency intervals
to obtain
\[
|  c_{ \mu\mu\mu}[\sigma]| \lesssim   \frac{\sigma}{\Lambda^\frac12} 
\|  u_\mu \|_{X^{a}}^2 \|  u_\mu \|_{X^\frac14_{\Lambda,le}}^2
\]
Combining the two we have 
\[
| c_{ \mu\mu\mu}[\sigma]| \lesssim \min\left\{
  \frac{\Lambda^{\frac32}}{\sigma
    \mu^2},\frac{\sigma}{\Lambda^\frac12} \right\} \| u_\mu
\|_{X^{a}}^2 \| u_\mu \|_{X^\frac14_{\Lambda,le}}^2
\]
We remark that the two factors balance when $\sigma = \Lambda/\mu \leq
\Lambda^\frac12$. In the second case the frequency separation is not
needed, therefore the most natural decomposition would be obtained by
restricting $\sigma$ to the range $ \Lambda/\mu \leq \sigma \leq
\Lambda$.

Since $\mu \geq \sigma$ and $\mu \geq \Lambda^\frac12$ it is now easy
to check that the above coefficient is at most $1$, with decay off the
diagonal $\sigma = \mu = \Lambda^\frac12$. The proof of the lemma is concluded.

\end{proof}

\section{Conclusion}

The final step in the paper is to use
Propositions~\ref{p:en}, \ref{p:le} in order to conclude the proof
of Proposition~\ref{penergy}. For $a \in S_\Lambda$ we can combine 
the results in Propositions~\ref{p:en},\ref{p:le} to obtain
\begin{equation}
\begin{split}
\sup_t \sum_{\lambda \geq \Lambda} a(\lambda) \|u_\lambda(t)\|_{L^2}^2
+ &\ \sup_j \sum_{\lambda\geq \Lambda} a(\lambda) \lambda^{-1} 
\|\chi_j^\lambda \partial_x u_\lambda(t)\|_{L^2}^2 \lesssim
\|u_0\|_{H^a}^2+ \|u\|_{L^\infty H^a}^2 \|u\|_{L^\infty H^{-\frac12}_\Lambda}^2
\\ & \ +   \Vert u \Vert_{X^a}^2 ( \Vert u \Vert_{X^{-\frac14}_{\Lambda}\cap X^{-\frac14}_{\Lambda,le}}^2 
  + \Vert u \Vert_{X^{-\frac14}_{\Lambda,le}}^4)
\end{split}
\end{equation}
For $\mu \geq \Lambda$ we apply this inequality for
the symbols $a_\mu \in S_\Lambda$ given by  
\begin{equation}  
a_{\mu}(\xi) = \mu^{-\frac12} 
( 1+\xi^2/\mu^2)^{-\frac14-\varepsilon} 
\end{equation} 
with small $\varepsilon$. Restricting the left hand side in the above
inequality to $\lambda = \mu$ we have
\[
\begin{split}
 \mu^{-\frac12} \|u_\mu\|_{L^\infty L^2}^2
+  \sup_j \mu^{-\frac32} 
\|\chi_j^\mu \partial_x u_\mu(t)\|_{L^2}^2 \lesssim & \ \|u_0\|_{H^{a_\mu}}^2+
\|u\|_{L^\infty H^{a_\mu}}^2 \|u\|_{L^\infty H^{-\frac12}_\Lambda}^2
 \\ & +   \Vert u \Vert_{X^{a_\mu}}^2 
( \Vert u \Vert_{X^{-\frac14}_{\Lambda}\cap X^{-\frac14}_{\Lambda,le}}^2 
  + \Vert u \Vert_{X^{-\frac14}_{\Lambda,le}}^4)
\end{split}
\]
Finally, we sum up with respect to dyadic $\mu \geq \Lambda$ to obtain
\[
\begin{split}
  \| u\|_{l^2L^\infty H^{-\frac14}_{\Lambda}}^2 + \| u\|_{l^2l^\infty
    L^2H^\frac14_{\Lambda}}^2 \lesssim & \
  \|u_0\|_{H^{-\frac14}_\Lambda}^2+ \| u\|_{l^2L^\infty
    H^{-\frac14}_{\Lambda}}^2\|u\|_{L^\infty H^{-\frac12}_\Lambda}^2\\
  & + \Vert u \Vert_{X^{-\frac14}_\Lambda}^2 ( \Vert u
  \Vert_{X^{-\frac14}_{\Lambda}\cap X^{-\frac14}_{\Lambda,le}}^2 +
  \Vert u \Vert_{X^{-\frac14}_{\Lambda,le}}^4)
\end{split}
\]
This implies both \eqref{en-nonlin} and \eqref{le-nonlin} 
and completes the proof of Proposition~\ref{penergy} for $s = -\frac14$.

\bibliographystyle{plain}

\end{document}